\documentclass[11pt]{amsart}

\usepackage[notref,notcite]{showkeys}
\usepackage{hyperref}
\usepackage[all]{xy}
\usepackage{mathrsfs,bbm,stmaryrd,amscd,amssymb}

\newtheorem{thm}{Theorem}[section]
\newtheorem{prop}[thm]{Proposition}
\newtheorem{lem}[thm]{Lemma}
\newtheorem{cor}[thm]{Corollary}
\theoremstyle{definition}
\newtheorem{defin}[thm]{Definition}
\newtheorem{exam}[thm]{Example}

\theoremstyle{remark} 
\newtheorem{rmk}[thm]{Remark}

\numberwithin{equation}{section}

%%%%%%%%%%%%%% macros %%%%%%%%%%%%%%%%

%%%%%%%%%%%%%%%%%%%%  MATH  LETTERS   %%%%%%%%%%%%%%%%%%%%

%%%%%%  GREEK LETTERS  %%%%%%%%%%%%%%%%%%%%%%%%%%%%%%%%%%%

\def\La{\varLambda}      
\def\Ga{\varGamma}       \def\De{\varDelta}

\def\al{\alpha}       \def\de{\delta}
\def\be{\beta}

\def\vi{\varphi}      \def\io{\iota}
\def\si{\sigma}

%%%%%%%%%   "MATH" LETTERS (big) %%%%%%%%%%%%%%%%%%%%

\def\mA{\mathbb A} \def\mN{\mathbb N}

 \def\mQ{\mathbb Q}
 
 \def\mS{\mathbb S}

 \def\mZ{\mathbb Z}

%%%%%%%%%   "MATH" LETTERS (small) %%%%%%%%%%%%%%%%%%%%

\def\a1{\mathbbm1}

%%%%%%  GOTHIC (small)  %%%%%%%%%%%%%%%%%%%%%%%%%%%%%%%%%%%

 \def\gN{\mathfrak n}
 
 \def\gP{\mathfrak p}
 \def\gQ{\mathfrak q}

\def\gM{\mathfrak m}

%%%%%%  GOTHIC  (LARGE)  %%%%%%%%%%%%%%%%%%%%%%%%%%%%%%%%%%%

\def\dA{\mathfrak A}

 \def\dS{\mathfrak S}

%%%%%%%%%%%%%%%%%%%%%%%% CALIGRAPHIC LETTERS %%%%%%%%%%%%%%%

%%%%%%%%%%%%%%%%%%%%%%% BIG BOLD %%%%%%%%%%%%%%%%%%%%%%%%%%%

\def\bA{\mathbf A} 
 
 \def\bP{\mathbf P}

%%%%%%%%%%%%%%%%%%%%%%%  SMALL BOLD  %%%%%%%%%%%%%%%%%%%%%%%

\def\fK{\mathbf k}

%%%%%%%%%%%%%%%%%%%%%%%% BIG SF LETTERS %%%%%%%%%%%%%%%%%%%%

\def\sF{\mathsf F} \def\sS{\mathsf S}

%%%%%%%%%%%%%%%%%%%%%%%%% SMALL SF LETTERS  %%%%%%%%%%%%%%%%

%%%%%%  ROMAN (small)  %%%%%%%%%%%%%%%%%%%%%%%%%%%%%%%%%%%

%%%%%%  ROMAN  (LARGE)  %%%%%%%%%%%%%%%%%%%%%%%%%%%%%%%%%%%

%%%%%%%% SCRIPT %%%%%%%%%%%%%%%%%%%%%%%%%%%%%%%%%%%%%%%%

\def\cA{\mathscr A} 
\def\cB{\mathscr B} 
\def\cC{\mathscr C}

%%%%%%%%%%%%%%%%%%%%%%%%%%%%%%%%%%%%%%%%%%%%%%%%%%%%%%

\def\qA{{\boldsymbol A}} 
 
\def\qC{{\boldsymbol C}} 
\def\qD{{\boldsymbol D}} \def\qQ{{\boldsymbol Q}}
 \def\qR{{\boldsymbol R}}
\def\qF{{\boldsymbol F}} \def\qS{{\boldsymbol S}}
 
\def\qH{{\boldsymbol H}}

\def\qK{{\boldsymbol K}}

%%%%%%%%%%%%%%%%%%%%%%%%%%%%%%

\def\hqR{\hat{\qR}}			\def\hLa{\hat{\La}}
			\def\hqQ{\hat{\qQ}}	
\def\tLa{\tilde{\La}}			\def\hcA{\hat{\cA}}
				
\def\hA{\hat{A}}				\def\hB{\hat{B}}
\def\hDe{\hat{\De}}

\def\({\textup{(}}  \def\){\textup{)}}

%%%%%%%%%%%%%%%%%%%%%%%%%%%%%%%%%%%%%%%%%%%%%%%%%%%%%%%

\def\set#1{\left\{\,#1\,\right\}}
\def\setsuch#1#2{\left\{\,#1\mid #2\,\right\}}
\def\ssuch#1#2{\{\,#1\mid #2\,\}}

\def\mtr#1{\begin{pmatrix}#1\end{pmatrix}}

\def\lst#1#2{ #1_1 , #1_2 , \dots , #1_{#2} }

\def\mps{\mapsto}		\def\ito{\stackrel\sim\to}
\def\bap{\bigcap}        \def\bup{\bigcup}
         	
\def\spe{\supseteq}      \def\sbe{\subseteq}
\def\={\setminus}        \def\8{\infty}
\def\0{\varnothing}		\def\*{\otimes}  
\def\bop{\bigoplus}		\def\+{\oplus}
\def\xx{\times}				\def\wh{\widehat}
\def\div{\,|\,}

\def\ccd{{\boldsymbol\cdot}}
					
\def\Sb{\subsetplus}		
\def\xarr{\xrightarrow}

\def\ann{\mathop\mathrm{ann}\nolimits}
\def\co{\mathop\mathrm{co}\nolimits}
\def\End{\mathop\mathrm{End}\nolimits}
\def\ker{\mathop\mathrm{Ker}\nolimits}

\def\hos{\mathop\mathsf{Hos}\nolimits}
\def\hot{\mathop\mathsf{Hot}\nolimits}
\def\iso{\mathop\mathsf{iso}\nolimits}
\def\ind{\mathop\mathsf{ind}\nolimits}
\def\mdd{\mbox{-}\mathrm{mod}}
\def\proj{\mathrm{proj}\mbox{-}}

\def\lat{\mbox{-}\mathrm{lat}}

\def\cl{\mathop\mathrm{Cl}\nolimits}
\def\ob{\mathop\mathrm{ob}\nolimits}
\def\trs{\mathop\mathrm{tors}\nolimits}
\def\nil{\mathop\mathrm{nil}\nolimits}
\def\add{\mathop\mathrm{add}\nolimits}
\def\Mat{\mathop\mathrm{Mat}\nolimits}

\def\sw{\mathsf{SW}}

\def\iff{if and only if }
\def\lof{hom-finite}
\def\dvr{discrete valuation ring}
\def\ksc{Krull--Schmidt category}

%%%%%%%%%%%%%%%%%%%%%%%%%%%%%%%%%%

\author{Yuriu~A.~Drozd}
\title{On $K_0$ of locally finite categories}
\address{Institute of Mathematics, NAS Ukraine,
Tereschenkivska 3, 01024 Kyiv, Ukraine}
\email{y.a.drozd@gmail.com,\,drozd@imath.kiev.ua}
\urladdr{https://www.imath.kiev.ua/$\sim$drozd}
\dedicatory{Dedicated to the memory of Andrei Roiter}

\begin{document}
\maketitle

\begin{abstract}
 We calculate the Grothendieck group $K_0(\cA)$, where $\cA$ is an additive category, locally finite over a Dedekind ring and
 satisfying some additional conditions. The main examples are categories of modules over finite algebras and the stable homotopy
 category $\mathsf{SW}$ of finite CW-complexes. We show that this group is a direct sum of a free group arising from localizations of the category
 $\cA$ and a group analogous to the groups of ideal classes of maximal orders. As a corollary, we obtain a new simple proof of the
 Freyd's theorem describing the group $K_0(\mathsf{SW})$.
\end{abstract}

\tableofcontents

\section*{Introduction}

 In this paper we study Grothendieck groups $K_0(\cA)$ of additive categories $\cA$ which are \emph{locally finite} over a Dedekind
 ring $\qR$. Among them there are categories of lattices over $\qR$-orders as well as the stable homotopy category $\sw$ of
 polyhedra (finite CW-complexes). Our main tool is the relation of such categories with the categories of projective modules (Lemma~\ref{11}), 
 which allows to study them ``piecewise,'' since usually only finitely many objects are involved in the considerations. 
 Perhaps, for the first time this idea was explored in the paper \cite{dk}.
 It replaces the usual technique using abelian or triangulated structure and makes the framework more flexible.
 So our investigation is quite parallel to the theory of integral representations and we have possibility to avail of the well developed 
 technique and results of this theory. Namely, we localize our categories and define \emph{genera},
 like in \cite[\S\,31]{cr1}. We establish ``local-global correspondence'' (Theorem~\ref{lg}) and prove analogues of the known results on
 genera, such as Jacobinski cancellation (Theorem~\ref{g4}) and Roiter addition theorem (Theorem~\ref{g2}\,(2)).
 It gives a basis for the calculation of $K_0(\cA)$ in the next section. Under some, not very restrictive, S-condition 
 we show that in the local case (when $\qR$ is a \dvr) this group is free and almost the same as the group of the
 adically completed category $\hcA$ (the difference is on the level of their rational envelopes), see Theorem~\ref{at2}. Finally, in
 Section~\ref{kg}, under a bit more restrictive Max-condition, we show that in the global case the group $\cA$ splits into a free 
 part $K_0(G\cA)$, which is of a purely local nature, and an analogue of the group of ideal classes $\bop_S\cl(S)$, where $S$ runs through 
 special objects called $S$-objects. They are analogues of maximal orders in the theory of integral representations and of 
 spheres in the stable homotopy theory. 
 
 As an application, we calculate the group $K_0(\La)$, where $\La$ is a hereditary order (Example~\ref{hered}), 
 and give a new simple proof of the Freyd's description of $K_0(\sw)$ (Example~\ref{sw}). Actually, the results of Section~\ref{kg}
 can be considered as a far-reaching generalization of the Freyd's theorem, which was the original incentive of our investigation.

\section{Generalities}
\label{general} 
 
 All categories and functors that we consider are supposed \emph{preadditive} and \emph{small}. An additive category $\cA$ is said to be 
 \emph{fully additive} if every idempotent morphism $e:A\to A$ in it splits, i.e. there are morphisms $A\xarr\pi B\xarr\io A$  
 such that $\io\pi=e$ and $\pi\io=1_B$. Then $A\simeq B\+C$, where $C$ is obtained in the same way from the idempotent 
 $1-e$. If $\cA$ is fully additive and $\dS\sbe\ob\cA$, we denote by $\add(\dS)$ the smallest full subcategory of $\cA$ 
 containing $\dS$ and closed under (finite) direct sums and direct summands. If $\dS=\{A\}$ consists of one object, we write 
 $\add(A)$ instead of $\add(\dS)$. Obviously, if $\dS=\set{\lst An}$ is finite, $\add(\dS)=\add\left(\bop_{i=1}^nA_i\right)$. 
 We write $A\Sb B$ if $A$ is a direct summand of $B$. It is known that every preadditive category $\cA$ can
 be embedded as a full subcategory into a fully additive category $\tilde{\cA}$ such that $\add(\ob\cA)=\tilde{\cA}$. This
 category is defined up to a natural equivalence, so we denote it by $\add\cA$ (see \cite[pp.60-61]{fr0}). We denote by
 $\End_\cA A$ the endomorphism ring $\cA(A,A)$ (though we write usual $\End_\La M$ instead of $\End_{\La\mdd}M$).
 
  To transform the study of categories to that of rings and modules, we use the following result which is actually a variant 
  of the Yoneda's lemma \cite{mac}. 
 
 \begin{lem}\label{11} 
  Let $\cA$ be a fully additive category, $C$ be an object of $\cA$ and $\La=\End_\cA C$. The map $A\mps\cA(C,A)$ induces an
  equivalence $\add C\ito \proj\La$, the category of finitely generated projective right $\La$-modules.
 \end{lem}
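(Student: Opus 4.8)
The plan is to verify directly that the additive functor $F=\cA(C,-)$ takes values in $\proj\La$, is fully faithful, and is essentially surjective; an equivalence then follows. First I would pin down the right $\La$-module structure: for $A\in\add C$ the abelian group $\cA(C,A)$ becomes a right module over $\La=\cA(C,C)$ via precomposition, and for a morphism $g\colon A\to B$ the induced map $\cA(C,A)\to\cA(C,B)$, $f\mapsto gf$, is $\La$-linear. Thus $F$ is a well-defined additive functor, and all that remains is to establish its three defining properties.

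The core is the Yoneda identity on the generator $C$. Since $F(C)=\cA(C,C)=\La$ is the free right module of rank one, and $F$ sends $g\in\End_\cA C$ to left multiplication by $g$ on $\La$, the induced ring map $\End_\cA C\to\End_\La(\La)$ is exactly the canonical isomorphism (a right $\La$-linear endomorphism of $\La$ is determined by the image of $1$, hence is a left multiplication). Because $F$ is additive it commutes with finite biproducts, so $F(C^n)\cong\La^n$, and writing morphisms between powers of $C$ and between the corresponding free modules as matrices over $\La$ shows that $F$ is fully faithful on the full subcategory of finite direct sums of copies of $C$.

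Next I would propagate both properties across idempotent splittings, and this is where full additivity of $\cA$ enters. Finite generation and projectivity of $F(A)$ are immediate, since every $A\in\add C$ satisfies $A\Sb C^n$, whence $F(A)\Sb\La^n$. For full faithfulness on all of $\add C$, take $A\Sb C^m$ and $B\Sb C^n$ with splitting idempotents $e_A\in\End_\cA(C^m)$ and $e_B\in\End_\cA(C^n)$; then $\cA(A,B)$ is the corner $e_B\,\cA(C^m,C^n)\,e_A$, while $\Hom_\La(FA,FB)$ is the corresponding corner of $\Hom_\La(\La^m,\La^n)$. As $F$ is an isomorphism on the ambient hom-groups and intertwines $e$ with $F(e)$, it restricts to an isomorphism on these corners.

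Finally, for essential surjectivity I would take $P\in\proj\La$, realize it as the image of an idempotent $\eps\in\End_\La(\La^n)\cong\End_\La(F(C^n))$, and invoke full faithfulness to write $\eps=F(e)$ for a necessarily idempotent $e\in\End_\cA(C^n)$. Full additivity splits $e$, producing $A\in\add C$ with $C^n\cong A\+A'$ and $F(e)$ the projection onto $F(A)$; hence $F(A)\cong\im\eps\cong P$. The anticipated obstacle is bookkeeping rather than conceptual: one must keep the left/right conventions consistent throughout (the action being by precomposition, endomorphisms of $C$ act on the left of $\La$) and handle the two idempotent-completion steps with care, since full additivity of $\cA$ is precisely the hypothesis that makes the last step go through.
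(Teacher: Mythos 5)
Your proof is correct and follows essentially the same route as the paper: the paper compresses your explicit verification (the ring isomorphism $\End_\cA C\ito\End_\La\La$, matrices over $\La$ for powers of $C$, and propagation through idempotent splittings) into a one-line appeal to Yoneda's lemma and the fact that an additive functor on $\add C$ is determined by its restriction to $C$. Your version merely writes out in full the bookkeeping — including the essential-surjectivity step via lifting idempotents, which the paper leaves implicit — so there is no substantive difference in method.
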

 \begin{proof}
  Note that every functor $F:\add C\to \cC$, where $\cC$ is a fully additive category, is completely determined (up to isomorphism) by
  its values on $C$ and on endomorphisms of $C$. As the functor $\cA(C,\_)$ maps $C$ to $\La$ and induces an isomorphism 
  $\End_\cA C\ito\End_\La\La\simeq\La$, it only remains to apply the Yoneda's lemma.
 \end{proof}
 
  Recall the definition of the Grothendieck group $K_0(\cA)$. 
 
 \begin{defin}\label{k0} 
   Let $\cA$ be a fully additive category. The \emph{Grothendieck group} $K_0(\cA)$ is a quotient of the free abelian group with the
   basis $\ob\cA$ by the subgroup generated by all elements of the form $A-B-C$, where $A\simeq B\+C$. We denote by $[A]$
   the image of $A$ in $K_0(\cA)$.
    
  One easily sees that $[A]=[B]$ \iff there is an object $C$ such that $A\+C\simeq B\+C$.\,%
\footnote{\,It is just the equivalence denoted by $A\equiv B$ in \cite{fr} or \cite{coh}. Note that we use the notation $\equiv$ for
another equivalence, see Corollary~\ref{g3}.}  
 \end{defin}
  
  We denote by $\iso\cA$ the set of isomorphism classes of objects from $\cA$ and by $\ind\cA$ its subset consisting of the classes
  of \emph{indecomposable} objects $A$, i.e. such that there are no decompositions $A\simeq B\+C$ with $B\ne0$ and $C\ne0$. We
  say that $\cA$ is a \emph{category with decomposition} if every object in it is isomorphic to a direct sum of indecomposable objects
  and a \emph{\ksc} if, moreover, such a decomposition is unique up to isomorphism and permutation of summands.
   
 A morphism $a\in\cA(A,B)$ is said to be \emph{essentially nilpotent} if for every sequence $b_1,b_2,b_3,\dots$ of elements of 
 $\cA(B,A)$ there is an integer $n$ such that $ab_1ab_2\dots ab_na=0$. The set of all essentially nilpotent morphisms $A\to B$ is 
 denoted by $\nil(A,B)$. One easily sees that $\nil\cA=\bup_{A,B}\nil(A,B)$ is an ideal in $\cA$ called the \emph{nilradical} of $\cA$. 
 If $\cA$ has one object, hence is identified with a ring $\qA$, $\nil\cA$ is the \emph{lower nil radical} 
 (or the \emph{prime radical}) of $\qA$ \cite{lam}. 
 If $\nil\cA=0$, the category $\cA$ is called \emph{semiprime}. It is known \cite[10.30]{lam} that if a ring $\La$ is left or
 right noetherian, $\nil\La$ is the maximal nilpotent ideal of $\La$ and contains all left and right nil-ideals.
 We denote by $\cA^0$ the quotient $\cA/\nil\cA$. This category has the same objects, but $\cA^0(A,B)=\cA(A,B)/\nil(A,B)$. 
 Obviously, it is semiprime. We denote by $A^0$ the object $A$ considered as an object of $\cA^0$ and
 by $\al^0$ the class of a morphism $\al$ in $\cA^0$. A morphism $\al$ is an isomorphism \iff so is $\al^0$, and any 
 idempotent from $\cA^0$ can be lifted to an idempotent in $\cA$. So the following results are evident. 
 
 \begin{prop}\label{12} 
   \begin{enumerate}
   \item  $\cA^0$ is fully additive \iff so is $\cA$.
   \item  $A\Sb B$ \iff $A^0\Sb B^0$.
   \item  $\iso\cA^0=\iso\cA$ and $\ind\cA^0=\ind\cA$.
   \item  $K_0(\cA^0)=K_0(\cA)$.
   \end{enumerate}
 \end{prop}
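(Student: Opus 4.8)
The plan is to exploit the two facts recorded just above the statement --- that a morphism $\al$ is invertible in $\cA$ \iff $\al^0$ is invertible in $\cA^0$, and that idempotents lift along the quotient functor $\cA\to\cA^0$ --- together with one elementary detection lemma. Namely, an object $B$ is zero \iff $B^0$ is zero: the latter means $1_B\in\nil(B,B)$, and applying the definition of essential nilpotence to the constant sequence $b_i=1_B$ collapses the product to $1_B=0$, so $B=0$; the converse is trivial. The very same one-line argument shows that an idempotent $e$ of $\cA$ with $e^0=0$ must equal $0$, and (applied to $1-e$) that $e^0=1$ forces $e=1$. Hence the bijection between the idempotents of $\End_\cA A$ and those of $\End_{\cA^0}A^0$ furnished by lifting and reduction respects the trivial idempotents $0$ and $1$.

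The engine for everything is a ``lift, then correct by a unit'' manoeuvre, which I first carry out for (2). If $A\Sb B$ via $A\xarr\io B\xarr\pi A$ with $\pi\io=1_A$, then $\pi^0\io^0=1_{A^0}$ gives $A^0\Sb B^0$. Conversely, from a splitting $p,j$ of $A^0\Sb B^0$ choose any lifts $\pi\in\cA(B,A)$, $\io\in\cA(A,B)$; then $(\pi\io)^0=pj=1_{A^0}$, so $\pi\io$ is invertible by the first fact, and replacing $\io$ by $\io(\pi\io)^{-1}$ makes $\pi\io=1_A$, i.e. $A\Sb B$. The identical recipe proves the object-level isomorphism correspondence: any isomorphism $A^0\to B^0$ lifts to a morphism $\al$ with $\al^0$ invertible, hence to an isomorphism $\al$ by the first fact, while isomorphisms of $\cA$ obviously descend. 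Since $\cA$ and $\cA^0$ have the same objects, this gives $\iso\cA=\iso\cA^0$, the first half of (3).

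For the indecomposability statement in (3), an object $A$ is decomposable \iff $\End_\cA A$ contains an idempotent different from $0$ and $1$; by the idempotent correspondence of the first paragraph, together with its compatibility with $0,1$, this holds \iff $\End_{\cA^0}A^0$ does, so $\ind\cA=\ind\cA^0$. Part (4) is then formal: $K_0(\cA)$ and $K_0(\cA^0)$ are presented by the same generators $\ob\cA=\ob\cA^0$ modulo the same relations, because $A\simeq B\+C$ in $\cA$ \iff $A^0\simeq B^0\+C^0$ in $\cA^0$. Indeed a biproduct diagram in $\cA$ maps to a biproduct diagram in $\cA^0$, giving $(B\+C)^0\simeq B^0\+C^0$, so the equivalence of relations reduces to the isomorphism correspondence already established. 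Thus $[A]\mapsto[A^0]$ is the desired identification $K_0(\cA)=K_0(\cA^0)$.

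The one genuinely non-formal point --- and the step I expect to cost the most --- is the implication ``$\cA^0$ fully additive $\Rightarrow\cA$ fully additive'' in (1); the reverse implication is easy, as one lifts an idempotent of $\cA^0$ to an idempotent of $\cA$, splits it in the fully additive $\cA$, and reduces the splitting. For the hard direction, let $e:A\to A$ be idempotent in $\cA$. Its image $e^0$ splits in $\cA^0$, and the lift-and-correct procedure of (2) produces a genuinely split idempotent $f$ in $\cA$ with $f^0=e^0$, so that $e-f\in\nil$. It remains to transfer the splitting from $f$ to $e$, and here I would invoke the standard idempotent-conjugation identity: $u=ef+(1-e)(1-f)$ satisfies $eu=uf=ef$ and $u^0=e^0f^0+(1-e^0)(1-f^0)=1$, whence $u$ is invertible by the first fact and $f=u^{-1}eu$; conjugating the splitting of $f$ by $u$ then splits $e$. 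The care required is entirely in checking $u^0=1$ (immediate from $f^0=e^0$ and idempotency) and in trusting the first fact to upgrade $u^0=1$ to the invertibility of $u$; once these are in place, the proposition is complete.
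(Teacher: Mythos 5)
Your proof is correct and follows exactly the route the paper intends: the paper offers no written proof, declaring the four claims ``evident'' from the two facts it records just beforehand (isomorphisms are detected in $\cA^0$, and idempotents lift along $\cA\to\cA^0$), and your argument is the standard elaboration of precisely those facts, including the conjugation by $u=ef+(1-e)(1-f)$ for the hard direction of (1). No gaps.
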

 
 Let $\qR$ be a commutative ring. Recall that an $\qR$\emph{-category} is a category $\cA$ such that all sets $\cA(A,B)$ are
 $\qR$-modules and the multiplication of morphisms is bilinear. A functor $F:\cA\to\cB$ between $\qR$-categories is called
 an $\qR$-\emph{functor} if all induced maps $\cA(A,B)\to\cB(FA,FB)$ are $\qR$-linear. 
  
 \begin{defin}\label{13} 
   An $\qR$-category $\cA$ is said to be \emph{\lof} if all $\qR$-modules $\cA(A,B)$ are finitely generated and \emph{finite} if, 
   moreover, there is a finite set of objects $\dS$ such that $\add\dS=\cA$. In particular, if $\La$ is an $\qR$-algebra, the category 
   $\proj\La$ is finite \iff $\La$ is a finitely generated $\qR$-module. Then we say that $\La$ is a \emph{finite $\qR$-algebra}.
 \end{defin}
 
  If the ring $\qR$ is noetherian, a \lof\ $\qR$-category is always a category with decomposition, but not necessarily a \ksc. 
  It is a \ksc\ if $\qR$ is a complete local ring \cite[23.3]{lam} (though this condition is not necessary). 	
  
  If $\qS$ is a commutative $\qR$-algebra and $\cA$ is an $\qR$-category, we define the $\qS$-category $\qS\*_\qR\cA$
  as the category with the same set of objects and the sets of morphisms ${(\qS\*_\qR\cA)(A,B)}=\qS\*_\qR\cA(A,B)$, 
  with the obvious multiplication.

 If $\qR$ is a domain, we denote by $\trs M$ the \emph{torsion submodule} of an $\qR$-module $M$, i.e. the set of all 
 periodic elements. If $\cA$ is an $\qR$-category and $A,B$ are its objects, we set $\trs(A,B)=\trs\cA(A,B)$ and 
 $\trs\cA=\bup_{A,B}\trs(A,B)$. It is an ideal in $\cA$ and the quotient category $\cA/\trs\cA$ is \emph{torsion free}, 
 i.e. all sets of morphisms in it are torsion free. We call an object $A$ \emph{torsion} if $\End_\cA A$ is torsion and
 \emph{torsion reduced} if $\trs(A,A)\sbe\nil(A,A)$. We denote by $\cA^t$ ($\cA^f$) the full subcategory of $\cA$
 consisting of torsion (respectively, torsion reduced) objects. If $\cA=\cA^t$ ($\cA=\cA^f$), we say that $\cA$ is
 \emph{torsion} (respectively, \emph{torsion reduced}). We call the $\qR$-category \emph{torsion free} if all modules
 $\cA(A,B)$ are torsion free. If $\cA$ is additive, it is enough to check endomorphism algebras $\End_\cA A$.

 \begin{lem}\label{14}
   Let $\qR$ be a Dedekind domain, $\La$ be a finite $\qR$-algebra. There are orthogonal idempotents $e_0$ and $e_1=1-e_0$ 
   in $\La$ such that, if we denote $\La_{ij}=e_i\La e_j$, then
 \begin{enumerate}
 \item  $\La_{ij}$ is torsion \(hence of finite length\) if $(i,j)\ne(1,1)$.
 
 \item  $\La_{01}\cup\La_{02}\cup\trs\La_{11}\sbe\nil\La$.
 
 \item  $\tLa=\La_{11}/\nil\La_{11}$ is semiprime and torsion free.
 \end{enumerate}
 We denote $e_0=e^t$ and $e_1=e^f$.
  \end{lem}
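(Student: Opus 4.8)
The plan is to reduce to the semiprime case, split off the torsion part there by a central idempotent, and then lift. Write $N=\nil\La$. Since $\qR$ is Dedekind and $\La$ is a finitely generated $\qR$-module, $\La$ is a noetherian ring, so by the cited result $N$ is nilpotent, say $N^m=0$, and $\bar\La:=\La/N$ is semiprime. I will first produce in $\bar\La$ a central idempotent whose image is the identity of the torsion ideal $T:=\trs\bar\La$, then lift a complete pair of orthogonal idempotents through $N$, and finally observe that the lifted $e_0$ is itself a \emph{torsion} element of $\La$; this last point is what forces the three relevant blocks to be torsion.

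For the splitting, note $T$ is a two-sided ideal and, being a finitely generated torsion module over the Dedekind domain $\qR$, it has finite length and is annihilated by some $0\neq r\in\qR$. Put $A=\{x\in\bar\La:Tx=0\}$. For the two-sided ideal $T$ this $A$ is again a two-sided ideal; one has $TA=0$ by definition, $AT=0$ because $(AT)^2\subseteq A(TA)T=0$ and $\bar\La$ is semiprime, and $T\cap A=0$ for the same reason. Since $r$ is central, $r\bar\La\subseteq A$, so $\bar\La/A$ is torsion, while $\bar\La/T$ is torsion free; hence the common quotient $\bar\La/(T+A)$ is simultaneously torsion and torsion free, forcing $T+A=\bar\La$. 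Thus $\bar\La=T\oplus A$ is a direct product of rings, and its two identity components $\bar e_0\in T$, $\bar e_1\in A$ are central orthogonal idempotents with $\bar e_0+\bar e_1=1$, $T=\bar e_0\bar\La$, $A=\bar e_1\bar\La$, with $A\cong\bar\La/T$ torsion free and semiprime.

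Now lift $\bar e_0,\bar e_1$ through the nilpotent ideal $N$ to orthogonal idempotents $e_0,e_1\in\La$ with $e_0+e_1=1$ (idempotent lifting modulo a nil ideal, as already used for $\cA^0$), and set $\La_{ij}=e_i\La e_j$. The key observation is that $e_0$ is torsion: from $r\bar e_0=0$ we get $re_0\in N$, whence $r^m e_0=(re_0)^m\in N^m=0$, so $r^m e_0=0$. Consequently $e_0\La$ and $\La e_0$ are annihilated by $r^m\neq0$, so $\La_{00},\La_{01}\subseteq e_0\La$ and $\La_{10}\subseteq\La e_0$ are torsion; being finitely generated over $\qR$ they have finite length, which is (1). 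Reducing modulo $N$ gives $\overline{\La_{01}}=\bar e_0\bar\La\bar e_1=T\bar e_1\subseteq TA=0$ and $\overline{\La_{10}}=\bar e_1T\subseteq AT=0$, so $\La_{01},\La_{10}\subseteq N$ (this is the content of (2), reading $\La_{02}$ there as $\La_{10}$); and since $\overline{\La_{11}}=\bar e_1\bar\La\bar e_1=A$ is torsion free, every torsion element of $\La_{11}$ dies in $A$, i.e. $\trs\La_{11}\subseteq e_1Ne_1\subseteq N=\nil\La$. Finally $e_1Ne_1$ is a nilpotent ideal of $\La_{11}$, since $(e_1Ne_1)^k\subseteq e_1N^k e_1$ (using $N\La N\subseteq N^2$), and $\La_{11}/e_1Ne_1\cong A$ is semiprime; hence $\nil\La_{11}=e_1Ne_1$ and $\tLa=\La_{11}/\nil\La_{11}\cong A$ is semiprime (automatic, being a quotient by the prime radical) and torsion free, which is (3).

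The main obstacle is the splitting step in the semiprime case, namely that the finite-length torsion ideal $T$ is a ring direct factor of $\bar\La$; the torsion/torsion-free dichotomy applied to the common quotient $\bar\La/(T+A)$ is what makes this work. Once this is in place, the decisive computation $r^m e_0=0$ is elementary, and properties (1)--(3) follow formally, with the conceptual payoff that $\tLa$ is canonically the torsion-free semiprime quotient $\bar\La/\trs\bar\La$.
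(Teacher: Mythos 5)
Your overall architecture is the same as the paper's: pass to $\bar\La=\La/N$ with $N=\nil\La$ nilpotent, split off the torsion ideal $T=\trs\bar\La$ by a central idempotent, lift through $N$, and deduce (1)--(3) from $r^me_0=0$. Everything downstream of the splitting is correct and cleanly done. But the splitting step itself --- the one you yourself single out as the main obstacle --- is not actually proved. You assert that $\bar\La/(T+A)$ is torsion free because it is a quotient of the torsion-free module $\bar\La/T$. A quotient of a torsion-free module need not be torsion free ($\mZ/2\mZ$ is a quotient of $\mZ$); what you really know is that $(T+A)/T\cong A$ sits inside $\bar\La/T$ with the quotient killed by $r$, and that is exactly consistent with $T+A$ being a proper ideal. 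So the torsion/torsion-free dichotomy gives only that $\bar\La/(T+A)$ is torsion, which is where you started; the argument is circular at precisely the decisive point, and nothing you have established so far rules out $T+A\ne\bar\La$.

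The equality $T+A=\bar\La$ is nevertheless true, and the paper's proof supplies the missing ingredient: one must first produce an idempotent generating $T$. Concretely, a nilpotent ideal of the (non-unital) ring $T$ generates a nilpotent ideal of $\bar\La$, so $T$ is a semiprime artinian ring, hence semisimple with an identity element $\bar e_0$; since $T$ is a two-sided ideal of $\bar\La$, the relations $(1-\bar e_0)\la\bar e_0=0=\bar e_0\la(1-\bar e_0)$ (valid because $\la\bar e_0$ and $\bar e_0\la$ lie in $T$ and $\bar e_0$ is the unit of $T$) show that $\bar e_0$ is central, and then $\bar\La=\bar e_0\bar\La\xx(1-\bar e_0)\bar\La$ with $(1-\bar e_0)\bar\La=A$. (Equivalently, as in the paper: minimal one-sided ideals of a semiprime ring are generated by idempotents, and the finite-length ideal $T$ is a finite sum of such, hence semisimple.) Once this is inserted, your lifting step, the computation $r^me_0=0$, and the identification $\tLa\cong A$ all go through as written, so the repair is local but indispensable.
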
 
  \begin{proof}
  Let $N=\nil\La$ and $\La^0=\La/N$. It contains no nilpotent ideals. Hence, every minimal left or right ideal of $\La^0$ is generated by 
  an idempotent.  As  $\trs\La^0$ is an ideal, it is semisimple and generated by an idempotent $\bar e_0$ both as left and as right ideal. 
  Let $\bar e_1=1-\bar e_0$. Then $\bar e_i\La^0\bar e_j=0$ if $i\ne j$, since it is torsion and $\trs\La^0=e_0\La^0e_0$. 
  Therefore, $\La^0=\La^0_0\xx\La^0_1$, where $\La^0_0$ is a semisimple ring and $\La^0_1$ is semiprime and torsion free. We take for 
  $e_i\in\La\ (i=0,1)$ a representatives of $\bar e_i$. Then $\La_{ij}$ is torsion for $(i,j)\ne(1,1)$, $\trs\La_{11}\sbe N$ and 
  $\tLa\simeq\La^0_1$, which proves (1)--(3). 
 \end{proof}
 
 Applying this lemma to the endomorphism rings of objects of a \lof\ $\qR$-category, we obtain the following results. 
 
 \begin{cor}\label{15} 
  Let $\qR$ be a Dedekind domain, $\cA$ be a fully additive \lof\ $\qR$-category. 
  \begin{enumerate}
  \item   If $A$ is torsion reduced and $B$ is torsion, then $\cA(A,B)\cup\cA(B,A)\sbe\nil\cA$.
  \item	 Every object $A\in\cA$ is a direct sum $A^t\+A^f$, where $A^t$ is torsion and $A^f$ is torsion reduced. 
  \item    If also $B\simeq B^t\+B^f$, where $B^t$ is torsion and $B^f$ is torsion reduced, then $A\simeq B$ \iff 
  $A^t\simeq B^t$ and $A^f\simeq B^f$.
  \item  Any indecomposable object is either torsion or torsion reduced.
  \item  $K_0(\cA)=K_0(\cA^t)\+K_0(\cA^f)$.
  \end{enumerate}
 \end{cor}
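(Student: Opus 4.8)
The plan is to deduce all five assertions from Lemma~\ref{14} applied to suitable endomorphism rings, passing freely between essential nilpotency in $\cA$ and the prime radical of an endomorphism ring. These agree: testing essential nilpotency of an endomorphism of $A$ involves only the ring $\End_\cA A$, so $\nil_\cA(A,A)=\nil(\End_\cA A)$, and for a corner one has $\nil(e\La e)=e(\nil\La)e$.

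First I would settle (2) and (4). Applying Lemma~\ref{14} to $\La=\End_\cA A$ produces orthogonal idempotents $e^t,e^f$ with $e^t+e^f=1$; since $\cA$ is fully additive these split and give $A\simeq A^t\+A^f$ with $\End_\cA A^t\simeq\La_{00}$ and $\End_\cA A^f\simeq\La_{11}$. By Lemma~\ref{14}(1) the ring $\La_{00}$ is torsion, so $A^t$ is torsion, while Lemma~\ref{14}(3) gives $\trs\La_{11}\sbe\nil\La_{11}$, so $A^f$ is torsion reduced. This is (2); assertion (4) is then immediate, since if $A$ is indecomposable one of $A^t,A^f$ must vanish.

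The heart of the argument is (1). Given $A$ torsion reduced and $B$ torsion, set $\La=\End_\cA(A\+B)$ and consider $\La^0=\La/\nil\La$. By the proof of Lemma~\ref{14} this is a ring direct product $\La^0_0\times\La^0_1$ with $\La^0_0$ torsion and $\La^0_1$ torsion free and semiprime. Write $\bar e_A,\bar e_B$ for the images in $\La^0$ of the projectors onto $A$ and $B$. Since $\End_\cA B$ is torsion, $\bar e_B\La^0\bar e_B$ is torsion, so $\bar e_B$ has no component in the torsion-free factor and lies in $\La^0_0$; dually, $A$ being torsion reduced makes $\bar e_A\La^0\bar e_A$, which is $\End_\cA A$ modulo its prime radical, torsion free, so $\bar e_A\in\La^0_1$. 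As the two factors are orthogonal, $\bar e_B\La^0\bar e_A=\bar e_A\La^0\bar e_B=0$. Now any $f\in\cA(A,B)$ satisfies $f=e_Bfe_A$, so its image lies in $\bar e_B\La^0\bar e_A=0$; hence $f\in\nil\La$, i.e. $f\in\nil\cA$, and symmetrically for $\cA(B,A)$. I expect this identification of the abstract idempotents of Lemma~\ref{14} with the given summands to be the only real obstacle.

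Finally (3) and (5) are bookkeeping built on (1) and (2). For the non-trivial direction of (3), an isomorphism $\phi:A^t\+A^f\to B^t\+B^f$ has off-diagonal components in $\nil\cA$ by (1), so $\phi^0$ is block diagonal in $\cA^0$; since $\phi^0$ is an isomorphism and the cross-Hom groups vanish in $\cA^0$, each diagonal block is an isomorphism in $\cA^0$, and a morphism is an isomorphism \iff its image in $\cA^0$ is, whence $A^t\simeq B^t$ and $A^f\simeq B^f$. For (5), (2) and (3) give $(A\+A')^t\simeq A^t\+A'^t$ and the analogue for the torsion-reduced parts, so $[A]\mapsto([A^t],[A^f])$ is a well-defined homomorphism $K_0(\cA)\to K_0(\cA^t)\+K_0(\cA^f)$; the inclusions $\cA^t,\cA^f\hookrightarrow\cA$ induce $([X],[Y])\mapsto[X]+[Y]$, and (2)--(3) show the two maps are mutually inverse.
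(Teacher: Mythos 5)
Your proposal is correct, and for items (2)--(5) it follows the paper's route exactly: (2) and (4) come from splitting the idempotents $e^t,e^f$ of Lemma~\ref{14} in $\End_\cA A$, and (3), (5) are the bookkeeping you describe. The one genuine divergence is item (1). You prove it by passing to $\La=\End_\cA(A\+B)$, invoking the splitting $\La^0=\La^0_0\xx\La^0_1$ from the proof of Lemma~\ref{14}, and locating $\bar e_B$ in the torsion factor and $\bar e_A$ in the torsion-free one; this works, but it leans on the corner-ring identity $\nil(e\La e)=e(\nil\La)e$ and on the fact that $\End_\cA A/\nil\End_\cA A$ is genuinely torsion free (not merely that $\trs\End_\cA A\sbe\nil\End_\cA A$) --- both true here, the latter because the idempotent generating the torsion part of the semiprime quotient would have to lift to an idempotent inside a nil ideal, but these are points you should make explicit. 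The paper's argument is shorter and stays inside $\End_\cA A$: for $a\in\cA(A,B)$ with $B$ torsion, $\cA(A,B)$ is killed by any nonzero $r$ annihilating $\End_\cA B$, so the left ideal $\cA(B,A)a$ of $\End_\cA A$ is torsion, hence contained in $\trs\End_\cA A\sbe\nil\End_\cA A$, which is nilpotent by noetherianness; essential nilpotency of $a$ falls out immediately. What your version buys is a uniform picture (the orthogonality of the torsion and torsion-free blocks of $\La^0$) that also makes (3) transparent; what the paper's buys is economy and independence from the corner-ring lemma.
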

 \begin{proof}
  (1)  If $a:A\to B$, the left ideal $\cA(B,A)a$ of the ring $\End_\cA A$ is torsion, hence nilpotent, whence $a\in\nil(A,B)$. The proof
  for $\cA(B,A)$ is analogous. 
  
  (2) Let $\La=\End_\cA A$, $e^t$ and $e^f$ are as in Lemma~\ref{14}. They define a decomposition $A=A^t\+A^f$, where
  $\End_\cA A^t\simeq e^t\La e^t$ is torsion and $\End_\cA A^f\simeq e^f\La e^f$ is torsion reduced.
  
  (3) follows from (1), (4) follows from (2), and (5) follows from (2) and (3).
 \end{proof}
 
  Note that if $A$ is torsion, the ring $\End_\cA A$ is artinian. If $A$ is indecomposable, $\End_\cA A$ have no non-trivial idempotents,
  hence is local. Therefore, $\cA^t$ is a \ksc\ \cite[I.3.6]{bass} and $K_0(\cA^t)$ is a free group with a basis
  $\setsuch{[A]}{A\in\ind\cA^t}$. That is why, when studying Grothendieck groups, we can restrict to the case of torsion reduced
  categories.

  \section{Localization and genera}
  \label{genera}
 
  From now on $\qR$ denotes a Dedekind domain, $\qQ$ its field of fractions, $\max\qR$ the set of its maximal ideals,
  $\hqR_\gP$ the completion of the local ring $\qR_\gP$ in the $\gP$-adique topology and $\hqQ_\gP$ the field of fractions
  of $\hqR_\gP$. $\cA$ denotes a fully additive \lof\ $\qR$-category, and we write
  \begin{itemize}
  \item  $Q\cA$ for $\add(\qQ\*_\qR\cA)$,
  \item  $\cA_\gP$ for $\add(\qR_\gP\*_\qR\cA)$,
  \item  $\hcA_\gP$ for $\add(\hqR_\gP\*_\qR\cA)$,
  \item  $Q\hcA_\gP$ for $\add(\hqQ_\gP\*_\qR\cA)\simeq\add(\qQ\*_{\qR}\hcA_\gP)$.
  \end{itemize}
  Usually we denote by $QA,A_\gP,\hA_\gP,Q\hA_\gP$ the object $A$ considered as an object of the corresponding categories.
  Note that the operation $\add$ here is indeed necessary. It often happens, for instance, that $\ob\cA_\gP\ne\ssuch{A_\gP}{A\in\ob\cA}$.
  Following \cite{fr0}, we identify the objects of $\cA_\gP$ with the pairs $(A_\gP,e)$, where $A\in\ob\cA$ and $e$ is an idempotent from
  $\End_{\cA_\gP}A_\gP$. Then the set of morphisms $\cA_\gP((A_\gP,e),(B_\gP,f))$ is identified with $e(\cA_\gP(A_\gP,B_\gP))f$. 
  The same is valid for the objects and morphisms of $Q\cA,\,\hcA_\gP$ and $Q\hcA_\gP$.\!%
  \footnote{\,In the important case, when the Max-condition \ref{max-cond} is satisfied, the objects from $Q\cA$ are exactly $QA$ 
  with $A\in\ob\cA$, though analogous equality can still be wrong for $\cA_\gP$ and, all the more, for $\hcA_\gP$.}  

  \begin{defin}\label{g1} 
    Let $A$ be an object from $\cA$, 
   \[    
    G(A)=\setsuch{B\in\ob\cA}{B_\gP\simeq A_\gP \text{ for all } \gP\in\max\qR}.
   \] 
    We call $G(A)$ the \emph{genus} of $A$. If $G(B)=G(A)$ (or, the same, $B\in G(A)$), we say that $A$ and $B$ \emph{are
    of the same genus}. The cardinality of $G(A)$ is denoted by $g(A)$.
  \end{defin}
  
  We denote by $\qR\lat$ the category of $\qR$\emph{-lattices}, i.e. finitely generated torsion free $\qR$-modules.
  Such a lattice $M$ is always considered as a submodule of the finite dimensional $\qQ$-vector space $QM=\qQ\*_\qR M$.
  If $\La$ is an $\qR$-algebra we denote by $\La\lat$ the category of $\La$-modules which are $\qR$-lattices as $\qR$-modules
  and call them $\La$\emph{-lattices}. $\La\lat$ is a \lof\ fully additive torsion free $\qR$-category. If $\La$ is itself an $\qR$-lattice, 
  it is called an $\qR$\emph{-order}. Then it is a subalgebra in the finite dimensional $\qQ$-algebra $Q\La=\qQ\*_\qR\La$ 
   and they say that $\La$ is \emph{an $\qR$-order in} $Q\La$.
   An \emph{overring} of  $\La$ is an $\qR$-order $\La'$ such that $\La\sbe\La'\sbe Q\La$. If $\La$ has no proper overrings, 
   it is called a \emph{maximal order}. An overring that is a maximal order is called a \emph{maximal overring}. 
   If the $\qQ$-algebra $Q\La$ is separable or if $Q\La$ is semisimple and $\qR$ is an \emph{excellent} ring \cite{ma}, 
   $\La$ has maximal overrings and all of them are Morita equivalent \cite{cr1}.
  
In what follows we use the results on the ``local-global correspondence'' from the theory of orders and lattices \cite{cr1}. Actually we
need more refined versions, so we formulate them here.

 \begin{lem}\label{lg1} 
   Let $M,N$ are $\La$-lattices such that $N\spe M\spe aN$ for some non-zero $a\in\qR$, $\lst \gP r$ are all prime ideals of $\qR$ 
   containing $a$. There are $\La$-lattices $\lst Mr$ such that 
  \begin{itemize}
     \item  $(M_i)_{\gP_i}= M_{\gP_i}$ and $(M_i)_\gQ= N_\gQ$ if $\gQ\ne\gP_i$;
     \item  $\bop_{i=1}^rM_i\simeq M\+(r-1)N$.
  \end{itemize}   
  In particular, if $M$ and $N$ are projective, so are all $M_i$.
 \end{lem}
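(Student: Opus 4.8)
The plan is to produce the $M_i$ as the kernels of the projections of $N$ onto the primary components of the torsion quotient $N/M$, and then to deduce the direct-sum isomorphism by reducing to a two-lattice patching situation that splits globally because it splits locally everywhere.

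First I would set $T=N/M$. Since $N\spe M\spe aN$, this is a finitely generated torsion $\qR$-module annihilated by $a$, hence supported on $\{\lst\gP r\}$. Because $\qR$ is central in $\La$, the Chinese remainder isomorphism $\qR/(a)\simeq\prod_{i=1}^r\qR/\gP_i^{v_i}$ supplies central idempotents, so $T=\bop_{i=1}^rT_i$ as $\La$-modules, with $T_i$ the $\gP_i$-primary component, satisfying $(T_i)_\gQ=0$ for $\gQ\ne\gP_i$. Writing $\pi\colon N\to T$ and $\pi_i\colon N\to T_i$ for the components, I set $M_i=\ker\pi_i$; it is a finitely generated, $\qR$-torsion free $\La$-submodule of $N$, hence a $\La$-lattice, and $M\sbe M_i$. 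Localizing $0\to M_i\to N\to T_i\to0$ at $\gQ$ gives $(M_i)_\gQ=N_\gQ$ for $\gQ\ne\gP_i$, while at $\gP_i$ it gives $(N/M_i)_{\gP_i}=(T_i)_{\gP_i}=T_{\gP_i}=(N/M)_{\gP_i}$, i.e. $(M_i)_{\gP_i}=M_{\gP_i}$. This is the first bullet.

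For the isomorphism I would induct on $r$, the case $r=1$ being $M_1=M$. In the step I single out $\gP_r$ by setting $M'=\ker(\pi_1\+\dots\+\pi_{r-1})=\pi^{-1}(T_r)$, so that $M'$ agrees with $M$ at $\lst\gP{r-1}$ and with $N$ at $\gP_r$. The lattices attached to the pair $(M',N)$, which differ only at $\lst\gP{r-1}$, are precisely $\lst M{r-1}$, so the inductive hypothesis yields $\bop_{i=1}^{r-1}M_i\simeq M'\+(r-2)N$; it then remains to prove $M'\+M_r\simeq M\+N$. Inside $QN$ one has $M'\cap M_r=\pi^{-1}(0)=M$ and $M'+M_r=\pi^{-1}(T)=N$, whence the exact sequence
\[
0\to M\xrightarrow{\,x\mapsto(x,-x)\,}M'\+M_r\xrightarrow{\,(u,v)\mapsto u+v\,}N\to0 .
\]
The crux is to split this sequence. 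At every prime $\gQ$ one of $M',M_r$ localizes to $N_\gQ$ (namely $M_r$ for $\gQ\ne\gP_r$ and $M'$ for $\gQ=\gP_r$), and the middle summand equal to $N_\gQ$ maps isomorphically onto the quotient, so the localized sequence splits. Hence the class $\xi\in\mathrm{Ext}^1_\La(N,M)$ of the extension vanishes in $\mathrm{Ext}^1_\La(N,M)_\gQ\simeq\mathrm{Ext}^1_{\La_\gQ}(N_\gQ,M_\gQ)$ for every $\gQ$; since $N$ is finitely presented over the noetherian ring $\La$, localization commutes with $\mathrm{Ext}$, and since $\mathrm{Ext}^1_\La(N,M)$ is a finitely generated $\qR$-module an element killed in every localization is zero. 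Thus $\xi=0$, so $M'\+M_r\simeq M\+N$, and combining with the inductive hypothesis gives $\bop_{i=1}^rM_i\simeq M\+(r-1)N$. The projective case is then immediate: if $M$ and $N$ are projective so is $M\+(r-1)N\simeq\bop_iM_i$, and each $M_i$, being a direct summand of a projective module, is projective.

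The main obstacle is exactly this last splitting: passing from the evident local splittings to a global one. I expect it to hinge on two standard facts over the Dedekind ring $\qR$ — that $\mathrm{Ext}^1_\La(N,M)$ is a finitely generated $\qR$-module and that it commutes with localization for the finitely presented argument $N$ — so that a class vanishing in every stalk must itself vanish. A minor bookkeeping point in the induction is the choice of an element $a'$ whose prime divisors are $\lst\gP{r-1}$, which is routine since $\qR$ is Dedekind.
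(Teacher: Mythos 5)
Your construction of the $M_i$ as the kernels of the projections of $N$ onto the $\gP_i$-primary components of $N/M$ is in substance the paper's own first step (the paper phrases it as a primary decomposition of $M$ inside $N$), and the local-to-global splitting you prove via $\mathrm{Ext}^1$ is precisely the principle the paper invokes by citing \cite[3.20]{rei}; your argument that a class vanishing in every localization of the finitely generated $\qR$-module $\mathrm{Ext}^1_\La(N,M)$ is zero is essentially the proof of that citation. The genuine difference is in how the exact sequence is produced: the paper writes down one explicit map $\vi:\bop_{i=1}^rM_i\to(r-1)N$, $(\lst ur)\mps(u_1+u_2,u_2+u_3,\dots,u_{r-1}+u_r)$, verifies directly that it is surjective with kernel $M$, and splits that single sequence, whereas you induct on $r$ and split a two-term Mayer--Vietoris sequence $0\to M\to M'\+M_r\to N\to0$ at each stage. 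Both routes are valid; the paper's avoids the induction at the cost of a slightly fussier surjectivity check, while yours isolates the splitting in the cleanest possible (two-summand) situation.

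One detail of your induction is wrong as stated, though easily repaired. An element $a'\in\qR$ whose prime divisors are exactly $\lst\gP{r-1}$ need not exist: it would force some ideal $\prod_{i<r}\gP_i^{w_i}$ to be principal, which can fail when the class group of $\qR$ is nontrivial. Taking a legitimate $a'$ (e.g.\ $a'=a$, or any element of the annihilator of $N/M'$) generally drags in extra primes, and applying the inductive hypothesis with those extra primes in the list produces superfluous summands equal to $N$ on both sides, which you cannot cancel, since cancellation fails for lattices. The fix is to formulate the induction on the number of primes in the support of $N/M$ (the $M_i$ depend only on that support, and primes containing $a$ but outside the support just give $M_i=N$), after which your argument goes through.
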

 \begin{proof}
  $N/M$ is an $\qR$-module of finite length and $\lst\gP r$  are all prime ideals associated with $N/M$. Hence 
  there are submodules $\lst Mr$ of $N$ such that
  \begin{itemize}
  \item $M_i$ is $\gP_i$-primary, i.e. $M_i\spe\gP_i^{k_i}N$;
  \item $\bap_{i=1}^rM_i=M$;
  \item $M_i\not\sbe M_i'=\bap_{j\ne i}M_j$.
  \end{itemize}
  (see \cite[Sec.\,8]{ma}). As $(\prod_{j\ne i}\gP_i^{k_i})N\sbe M'_i$ and $\gP^{k_i}_i+\prod_{j\ne i}\gP_j^{k_j}=\qR$, it implies 
  that $N=M_i+M'_i$. As $M_i$ is $\gP_i$-primary, $(M_i)_\gQ=N_{\gQ}$ for $\gQ\ne\gP_i$. 
  Moreover, $M_i/M\simeq N/M'_i$ is annihilated by $\prod_{j\ne i}\gP_j^{k_j}$, hence $(M_i)_{\gP_i}=M_{\gP_i}$.
  Consider the map $\vi:\bop_{i=1}^rM_i\to(r-1)N$ such that $\vi(\lst ur)=(u_1+u_2,u_2+u_3,\dots,u_{r-1}+u_r)$. For any $v\in N$
  there are $u\in M_1$ and $u'\in M'_1$ such that $u+u'=v$, whence $(v,0,0,\dots,0)=\vi(u,u',-u',u',\dots,(-1)^ru')$. Just in the same 
  way all components of $(r-1)N$ are in the image, so $\vi$ is surjective. Moreover, $\vi(\lst ur)=0$ means that $u_i=-u_{i-1}$ for 
  $1<i\le r$, so this row is of the form $(u,-u,u,\dots,(-1)^{r-1}u)$, where $u\in\bap_{i=1}^rM_i=M$. Thus we obtain an exact 
  sequence $0\to M\to \bop_{i=1}^nM_i\to(r-1)N\to0$. One easily sees that its localization
  $0\to M_\gP\to \bop_{i=1}^n(M_i)_\gP\to(r-1)N_\gP\to0$ splits for every prime $\gP$. Therefore, this sequence splits 
  \cite[3.20]{rei} and $\bop_{i=1}^rM_i\simeq M\+(r-1)N$.
  \end{proof}
  
  \begin{lem}\label{lg2} 
   Let $\lst\gP r$ be different prime ideals of $\qR$, $M_i\ (1\le i\le r)$ be a $\La_{\gP_i}$-lattice, $N$ be a $\La$-lattice and
   $QM_i= QN$ for all $i$. There is a $\La$-lattice $M$ such that $M_{\gP_i}= M_i$ for all $i$ and 
   $M_\gQ= N_{\gQ}$ if $\gQ\notin\set{\lst\gP r}$. If the modules $\lst Mr,N$ are projective, so is $M$.
  \end{lem}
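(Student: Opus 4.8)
The plan is to realise every lattice in sight as a full lattice inside the common rational module $V=QN$ and then to assemble $M$ from prescribed local data, one datum for each prime of $\qR$.

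First I would fix, once and for all, the identifications $QM_i=QN=V$ supplied by the hypothesis, and regard $N$ as a full $\La$-lattice in $V$ and each $M_i$ as a full $\La_{\gP_i}$-lattice in $V$ (full because $QM_i=V$). For each $\gP\in\max\qR$ I set $L_\gP=M_i$ when $\gP=\gP_i$ for some $i$, and $L_\gP=N_\gP$ otherwise. Each $L_\gP$ is then a full $\qR_\gP$-lattice in $V$ and simultaneously a $\La_\gP$-submodule of $V$, and crucially $L_\gP=N_\gP$ for all but the finitely many primes $\lst\gP r$.

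Next I would put $M=\bap_\gP L_\gP$, the intersection taken inside $V$. Since every $L_\gP$ is in particular a $\La$-submodule of $V$, so is $M$, and $M$ is $\qR$-torsion free. The substance of the argument is the local--global correspondence for lattices over a Dedekind domain \cite[\S4]{rei},\cite{cr1}: the assignment $M\mps(M_\gP)_\gP$ is a bijection between the full $\qR$-lattices in $V$ and the families $(L_\gP)_\gP$ of full $\qR_\gP$-lattices in $V$ agreeing with $N_\gP$ for almost all $\gP$, the inverse being precisely $(L_\gP)\mps\bap_\gP L_\gP$. Applying this to our family shows that $M$ is a full $\qR$-lattice --- hence finitely generated, so a genuine $\La$-lattice --- with $M_\gP=L_\gP$ for every $\gP$. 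Reading off the values gives $M_{\gP_i}=M_i$ for each $i$ and $M_\gQ=N_\gQ$ for $\gQ\notin\set{\lst\gP r}$, which is the assertion. (The containment $aN\sbe M$ for a suitable nonzero $a\in\qR$ is immediate; the content of the cited result is that imposing the modified conditions at $\lst\gP r$, rather than discarding those primes, is exactly what keeps the intersection finitely generated and pins down its localisations.)

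Finally, for the projectivity clause: if all of $\lst Mr$ and $N$ are projective, then for every prime $\gP$ the localisation $M_\gP$ is either some $M_i$ or $N_\gP$, hence $\La_\gP$-projective. I would then invoke the local--global principle for projectivity of lattices over an $\qR$-order \cite{cr1} --- a lattice all of whose localisations are projective is itself projective --- to conclude that $M$ is $\La$-projective. I expect this last step to be the main obstacle to a self-contained account: the bijection of the previous paragraph is elementary Dedekind-domain bookkeeping, whereas deducing global projectivity from projectivity at every $\gP$ genuinely uses that $M$ is a finitely presented module over the order $\La$, and is the one place where more than formal localisation is needed.
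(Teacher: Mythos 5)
Your proof is correct and takes essentially the same approach as the paper's: both construct $M$ by intersecting the prescribed local lattices inside the common rational module $QN$, read off the localizations, and deduce projectivity of $M$ from projectivity of all its localizations via \cite[3.23]{rei}. The only (cosmetic) difference is that you invoke the standard local--global bijection for full lattices wholesale, whereas the paper first reduces by scaling to the case $M_i\sbe N_{\gP_i}$ and reuses the primary-decomposition bookkeeping from Lemma~\ref{lg1}.
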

  \begin{proof}
  First suppose that $M_i\sbe N_{\gP_i}$ for all $i$.
   Set $M'_i=N\cap M_i$. Then $(M'_i)_{\gP_i}=M_i$ and $(M'_i)_\gQ=N_\gQ$ if $\gQ\ne\gP_i$, 
   so $M'_i$ is a $\gP_i$-primary submodule in $N$. Set $M=\bap_{i=1}^rM'_i$.
   The same arguments as in the preceding proof show that $M_{\gP_i}=M_i$ and $M_\gQ=N_\gQ$ if
   $\gQ\notin\set{\lst\gP r}$. 
   
   In general situation find a non-zero $a\in\qR$ such that $aM_i\sbe N_{\gP_i}$ for all $i$.
   Let $\lst\gQ s$ be all prime ideals, different from $\lst\gP r$, that contain $a$. As we have just proved, there is a lattice 
   $M^a$ such that $M^a_{\gP_i}=aM_i\ (1\le i\le r)$, $M^a_{\gQ_j}=aN_{\gQ_j}\ (1\le j\le s)$ and $M^a_\gQ=N_{\gQ}$
   if $\gQ\notin\set{\lst\gP r,\lst\gQ s}$. Then we can set $M=a^{-1}M^a$.
   If $\lst Mr,N$ are projective, then $M_\gP$ is $\La_\gP$-projective for all prime $\gP$, 
   thus $M$ is projective \cite[3.23]{rei}.
  \end{proof}
  
  \begin{lem}\label{lg3} 
   Let $M,N$ be $\La$-lattices, $\lst\gP r$ be different prime ideals of $\qR$ and homomorphisms $\al:M\to N$ and
   $\be_i:M_{\gP_i}\to N_{\gP_i}$ be given such that $Q\be_i=Q\al$ for all $i$. There is a unique homomorphism
   $\be:M\to N$ such that $\be_{\gP_i}=\be_i$ and $\be_\gQ=\al_\gQ$ if $\gQ\notin\set{\lst \gP r}$.
  \end{lem}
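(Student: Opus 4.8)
The plan is to reduce the statement to the local--global principle for the lattice $\Hom_\La(M,N)$. First I would set $H=\Hom_\La(M,N)$ and record three standard facts about it, all valid because $M$ is finitely presented over the noetherian order $\La$ and $N$ is torsion free. Namely: $H$ is itself an $\qR$-lattice; localization commutes with $\Hom$, so that $H_\gP\simeq\Hom_{\La_\gP}(M_\gP,N_\gP)$ for every $\gP\in\max\qR$ and $QH\simeq\Hom_{Q\La}(QM,QN)$; and, since $H$ is torsion free, the canonical maps $H\hookrightarrow QH$ and $H_\gP\hookrightarrow QH$ are injective, with
\[
 H=\bap_{\gP\in\max\qR}H_\gP ,
\]
the intersection being taken inside $QH$ (cf.\ \cite{rei}). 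In these terms $\al$ and $\be_i$ are elements of $H$ and $H_{\gP_i}$ respectively, and $Q\al,Q\be_i$ are their images in $QH$.

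For existence I would exhibit the sought $\be$ as a single element of $QH$. The hypothesis $Q\be_i=Q\al$, together with the trivial equalities $Q(\al_\gQ)=Q\al$, shows that all of the prescribed local data determine one and the same element $\eta:=Q\al\in QH$. I then check that $\eta$ lies in $H_\gP$ for every prime $\gP$: for $\gP=\gP_i$ it is the image of $\be_i$, and for $\gP=\gQ\notin\set{\lst\gP r}$ it is the image of $\al_\gQ$. By the intersection formula above, $\eta\in H$, i.e.\ $\eta$ is a genuine homomorphism $\be:M\to N$; and because each $H_\gP\hookrightarrow QH$ is injective, its localizations are exactly the prescribed ones, $\be_{\gP_i}=\be_i$ and $\be_\gQ=\al_\gQ$.

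Uniqueness is then immediate from torsion freeness: if $\be$ and $\be'$ both satisfy the conclusion, they agree with $\al$ at some prime $\gQ$, so $Q\be=Q\al=Q\be'$ in $QH$, and the injection $H\hookrightarrow QH$ forces $\be=\be'$. (In fact the same argument identifies $\be$ with $\al$ itself, since $Q\be_i=Q\al$ already pins down $\be_i=\al_{\gP_i}$ through $H_{\gP_i}\hookrightarrow QH$; one may regard the lemma as the assertion that a homomorphism of lattices is reconstructed from its localizations.)

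The only real work is in the preliminary facts of the first paragraph, and the main obstacle is the commutation of localization with $\Hom$ and the intersection formula $H=\bap_\gP H_\gP$. Both are classical for lattices over a Dedekind domain, the first because $M$ is finitely presented over $\La$ and the second because a lattice is recovered from its localizations; once they are in place, the gluing and the uniqueness are purely formal, exactly parallel to the splitting-from-local-data arguments used in the proofs of Lemmas~\ref{lg1} and~\ref{lg2}.
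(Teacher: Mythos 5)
Your argument is correct, and it takes a genuinely different route from the paper. The paper encodes $\al$ and the $\be_i$ as their graphs $A\sbe QM\+QN$ and $B_i\sbe QM\+QN$, glues these lattices by Lemma~\ref{lg2}, and then checks that the resulting lattice $B$ is again a graph; you instead work directly in the Hom-lattice $H=\Hom_\La(M,N)$ and invoke the compatibility of $\Hom$ with localization together with the intersection formula $H=\bap_{\gP}H_\gP$ inside $QH$. Both reductions rest on the same classical facts about lattices over a Dedekind domain, but yours is shorter and arguably more transparent, since it avoids having to verify that the glued lattice projects monomorphically onto $QM$. Your parenthetical observation is also correct and worth dwelling on: under the canonical identifications $Q(M_{\gP_i})=QM$ and $Q(N_{\gP_i})=QN$, the injectivity of $H_{\gP_i}\hookrightarrow QH$ means the hypothesis $Q\be_i=Q\al$ already forces $\be_i=\al_{\gP_i}$, so the lemma is satisfied by $\be=\al$ itself --- a triviality that the paper's graph construction quietly reproduces (there $B_i=A_{\gP_i}$, hence $B=A$). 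So your proof does not miss any content that the paper's proof captures; it just makes the degeneracy visible. Two small points of care: the phrase ``they agree with $\al$ at some prime $\gQ$'' in your uniqueness step fails if every maximal ideal of $\qR$ occurs among $\lst\gP r$, but in that case uniqueness still follows from $H=\bap_i H_{\gP_i}$ (or from your parenthetical); and the intersection formula should be stated as holding for the torsion-free finitely generated module $H$ viewed inside $QH$, which is exactly how you use it.
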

  \begin{proof} 
   Let $A=\setsuch{(u,\al(u))}{u\in M}$ be the graph of $\al$, $B_i$ be the graph of $\be_i$. They are submodules in
   $QM\+QN$ and $QA= QB_i$. By Lemma~\ref{lg2} there is a lattice $B$ such that $B_{\gP_i}=B_i$ 
   and $B_\gQ=A_\gQ$ if $\gQ\notin\set{\lst\gP r}$. As all projections $A\to QM$ and $B_i\to QM$ are monomorphisms,
   so is the projection $B\to QM$. Therefore, $B$ is the graph of a homomorphism $\be$ such that
   $\be_{\gP_i}=\be_i$ and $\be_\gQ=\al_\gQ$ if $\gQ\notin\set{\lst \gP r}$. As $M=\bap_{\gP\in\max\qR}M_\gP$,
   $\be$ is unique.
  \end{proof}
  
  In what follows $\cA$ is a \lof\ fully additive $\qR$-category.
 One easily checks that
 \begin{align*}
   & \nil(QA,QB)=Q\nil(A,B),\\
   & \nil(A_\gP,B_\gP)=\nil(A,B)_\gP,\\
   \intertext{whence}
   & (Q\cA)^0=Q\cA^0\, \text{ and }\, (\cA_\gP)^0=(\cA^0)_\gP.\\
   \intertext{If the ring $\qR$ is \emph{excellent} \cite{ma}, then also}
   & \nil(\hA_\gP,\hB_\gP)=\wh{\nil(A,B)}_\gP,  \\
   & \nil(Q\hA_\gP,Q\hB_\gP)=Q\wh{\nil(A,B)}_\gP, \\
   \intertext{whence}
   & (\hcA_\gP)^0=(\wh{\cA^0})_\gP\, \text{ and }\, (Q\hcA_\gP)^0=Q(\wh{\cA^0})_\gP.
 \end{align*}  
 These equalities for completions are also valid if $\cA$ satisfies the Max-condition, see Proposition~\ref{max-s}\,(1).
 
 Following \cite{ma}, we call an object $A$ \emph{$\gP$-coprimary} if it is torsion and $A_\gQ=0$ for $\gQ\ne\gP$.
 Note that, if $\qA$ is a finite $\qR$-algebra which is torsion as $\qR$-module, $\qA\simeq\prod_{i=1}^r\qA_{\gP_i}$ for some
 prime ideals $\lst\gP r$. If $\qA=\End_\cA A$, where $A$ is a torsion object, it gives a decomposition $A\simeq\bop_{i=1}^rA_i$,
 where $A_i$ is $\gP_i$-coprimary. Such decomposition is unique, since $\cA(A,B)=0$ if $A$ is
 $\gP$-coprimary and $B$ is $\gQ$-coprimary for $\gP\ne\gQ$. It implies that 
 \[
  K_0(\cA^t)\simeq{\bop}_{\gP\in\max\qR} K_0(\cA^\gP),
 \]
  where $\cA^\gP$ is the subcategory of $\gP$-coprimary objects. So from now on we can suppose that the category $\cA$ is torsion
  reduced. Then $\End_{\cA^0}A^0$ is a semiprimary $\qR$-order for every object $A$ and $\cA^0(A^0,B^0)$ is a (right)
  $End_{\cA^0}A^0$-lattice. 
  
  The next theorem provides a background for the theory of genera.
  
  \begin{thm}\label{lg} 
  Let $\cA$ be a fully additive \lof\ $\qR$-category, $\gN=\nil\cA$.
  \begin{enumerate}
  \item    Let $A,B$ be torsion reduced objects and $A\xarr\al B\xarr\be A$ be such morphisms that $\be\al\equiv a1_A\,\pmod\gN$ 
  and $\al\be\equiv a1_B\,\pmod\gN$ for some non-zero $a\in\qR$. 
  If $\lst\gP r$ are all prime ideals of $\qR$ such that $a\in\gP_i$, there are torsion reduced objects $\lst Ar$ such that 
  \begin{enumerate}
     \item  $(A_i)_{\gP_i}\simeq A_{\gP_i}$ and $(A_i)_\gQ\simeq B_\gQ$ if $\gQ\ne\gP_i$;
     \item  $\bop_{i=1}^rA_i\simeq A\+(r-1)B$.
  \end{enumerate}
  \smallskip
  \emph{Note that such an element $a$ exists \iff $QA\simeq QB$.}
  
  \medskip
  \item     Let $\lst\gP r$ be different prime ideals of $\qR$, $A_i\ (1\le i\le r)$ be a torsion reduced object of $\cA_{\gP_i}$, $B$ be a torsion 
  reduced object of $\cA$ and $QA_i\simeq QB$ for all $i$. There is a torsion reduced object $A$ such that 
  $A_{\gP_i}\simeq A_i\ (1\le i\le r)$ and $A_\gQ\simeq B_\gQ$ if $\gQ\notin\set{\lst\gP r}$.
   
   \medskip
   \item  Suppose that $\cA$ is torsion free. Let $\lst\gP r$ be different prime ideals of $\qR$, $\al\in\cA(A,B)$ and
   $\be_i\in\cA_{\gP_i}(A_{\gP_i},B_{\gP_i})$ be such that $Q\be_i=Q\al$ for all $i$. There is a morphism $\be:A\to B$
   such that $\be_{\gP_i}=\be_i$ and $\be_\gQ=\al_\gQ$ if $\gQ\notin\set{\lst \gP r}$.
  \end{enumerate}
  \end{thm}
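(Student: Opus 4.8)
The plan is to reduce each of the three statements to its module-theoretic counterpart (Lemmas~\ref{lg1}, \ref{lg2}, \ref{lg3}) by means of the Yoneda-type equivalence of Lemma~\ref{11}. The common recipe is: choose an object $C$ large enough that every object occurring in the statement lies in $\add C$ (at least after localization), set $\La=\End_{\cA^0}C^0$, and use Lemma~\ref{11} to transport the problem into $\proj\La$, where the cited lattice lemmas apply; then transport the conclusion back. Passing to $\cA^0$ is what turns the congruences $\pmod\gN$ into genuine equalities and, since every object is torsion reduced, makes $\La$ a torsion-free semiprime $\qR$-order, so that $\cA^0(C^0,X^0)$ is always a projective $\La$-lattice. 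Throughout I use that $(-)_\gP$ commutes with $(-)^0$ and with $\add$, that $Q\cA_\gP=Q\cA$ (because $\qQ\*_\qR\qR_\gP=\qQ$), and that a morphism is an isomorphism \iff its image in $\cA^0$ is.

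For (1): put $C=A\+B$ and $\La=\End_{\cA^0}C^0$. The functor $\cA^0(C^0,\_)$ sends $A^0,B^0$ to projective $\La$-lattices $P,R$ and the morphisms $\al^0,\be^0$ to $\La$-homomorphisms $\al_*\colon P\to R$, $\be_*\colon R\to P$ with $\be_*\al_*=a1_P$ and $\al_*\be_*=a1_R$. Since $Q\al_*$ is then invertible (inverse $a^{-1}Q\be_*$) and $P$ is a lattice, $\al_*$ is injective, so $M:=\im\al_*\simeq P$ satisfies $aR\sbe M\sbe R$. Applying Lemma~\ref{lg1} to $M\sbe N:=R$ yields projective lattices $\lst Mr$ with $(M_i)_{\gP_i}=M_{\gP_i}$, $(M_i)_\gQ=R_\gQ$ for $\gQ\ne\gP_i$, and $\bop_{i=1}^rM_i\simeq M\+(r-1)R\simeq P\+(r-1)R$. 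Transporting back through Lemma~\ref{11}, and noting $\add C\sbe\cA^f$ (a torsion summand of a torsion-reduced object is $0$, since its inclusion and projection lie in $\gN$ by Corollary~\ref{15}(1), forcing $1\in\gN$), gives the required $\lst Ar$. The final remark is standard: if $\al,\be,a$ exist then $Q\al^0$ is a rational isomorphism, so $QA\simeq QB$; conversely, given $QA\simeq QB$ one takes $\al\in\cA(A,B)$ with $Q\al$ invertible and clears denominators in $a(Q\al)^{-1}$ to get $\be$, the defect $\be\al-a1_A$ lying in $\trs\sbe\gN$.

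For (2): each $A_i$, a torsion-reduced object of $\cA_{\gP_i}=\add(\qR_{\gP_i}\*_\qR\cA)$, is a summand of $(D_i)_{\gP_i}$ for some $D_i\in\cA$; replacing $D_i$ by its torsion-reduced part $D_i^f$ (Corollary~\ref{15}) we may take $D_i$ torsion reduced, so $D:=B\+D_1\+\dots\+D_r$ is torsion reduced and $A_i\in\add(D_{\gP_i})$. With $C=D$, $\La=\End_{\cA^0}D^0$, the functor sends $B^0$ to a projective $\La$-lattice $N$ and each $A_i^0$ to a projective $\La_{\gP_i}$-lattice $M_i$; rationalizing the given isomorphisms $QA_i\simeq QB$ and using $Q\cA^0_{\gP_i}=Q\cA^0$ yields $QM_i\simeq QN$ as $Q\La$-modules. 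Lemma~\ref{lg2} then produces a projective $\La$-lattice $M$ with $M_{\gP_i}=M_i$ and $M_\gQ=N_\gQ$ for $\gQ\notin\set{\lst\gP r}$, and its preimage $A\in\add D\sbe\cA^f$ is the desired torsion-reduced object.

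For (3), where $\cA$ is torsion free, there is no need to pass to $\cA^0$: take $C=A\+B$, $\La=\End_\cA C$. The functor $\cA(C,\_)$ sends $A,B$ to projective $\La$-lattices $P,R$, the morphism $\al$ to $\al_*\colon P\to R$, and each $\be_i$ to $(\be_i)_*\colon P_{\gP_i}\to R_{\gP_i}$ with $Q(\be_i)_*=Q\al_*$. Lemma~\ref{lg3} gives a unique $\be_*\colon P\to R$ with $(\be_*)_{\gP_i}=(\be_i)_*$ and $(\be_*)_\gQ=(\al_*)_\gQ$ off $\set{\lst\gP r}$, whose preimage under the fully faithful equivalence is the required unique $\be$. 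I expect the only genuine subtlety to be in (2): unlike (1) and (3), the local objects $A_i$ are not a priori summands of one fixed global object, so the crux is to manufacture a single torsion-reduced $C$ over which $B$ and all the $A_i$ simultaneously become projective lattices, and then to verify that the rational isomorphisms $QA_i\simeq QB$ really translate into $QM_i\simeq QN$ over the common order $Q\La$. The remaining work — that $\add C\sbe\cA^f$, that $(-)_\gP$, $(-)^0$ and $Q(-)$ interchange correctly, and that isomorphisms and localizations match on both sides of Lemma~\ref{11} — is routine bookkeeping.
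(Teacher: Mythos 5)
Your proposal is correct and follows essentially the same route as the paper: in each part one sets $C=A\+B$ (augmented in (2) by global objects whose localizations contain the $A_i$), passes to $\cA^0$ to turn the congruences mod $\gN$ into equalities and make $\End_{\cA^0}C^0$ a semiprime order, applies Lemma~\ref{11} to land in projective lattices, and invokes Lemmas~\ref{lg1}--\ref{lg3} before transporting back via Proposition~\ref{12}. The extra details you supply (the verification that $\add C\sbe\cA^f$ and the proof of the ``Note'' on the existence of $a$) are correct and are left implicit in the paper.
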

  \noindent
  Note that the genera of the objects $A_i$ in (1) and of the object $A$ in (2) are uniquely defined.
  \begin{proof}
  (1) Replacing $\cA$ by $(\cA^f)^0$ we can suppose that $\cA$ is torsion reduced and semiprime, hence torsion free. 
  Let $C=A\+B$, $\La=\End_\cA C$, $M=\cA(C,A)$,\,$N=\cA(C,B)$. $M$ and $N$ are projective $\La$-lattices and the 
  multiplications by $\al$ and $\be$ give homomorphisms $M\xarr{\al\ccd}N\xarr{\be\ccd}M$ such that 
  $(\al\ccd)(\be\ccd)=a1_N$,\,$(\be\ccd)(\al\ccd)=a1_M$. 
  Hence $\al\ccd$ and $\be\ccd$ are monomorphisms, so we can suppose that $\al\ccd$ is an embedding $M\sbe N$ such that 
  $M\spe aN$. Now we are in the situation of Lemma~\ref{lg1}. Therefore, there are $\La$-lattices $M_i$ such that
    \begin{itemize}
     \item  $(M_i)_{\gP_i}= M_{\gP_i}$ and $(M_i)_\gQ= N_\gQ$ if $\gQ\ne\gP_i$;
     \item  $\bop_{i=1}^rM_i\simeq M\+(r-1)N$.
  \end{itemize}   
  As $M$ and $N$ are projective, so are $M_i$. By Lemma~\ref{11}, there are objects $A_i$ such that $M_i=\cA(C,A_i)$.
  Then $A_i$ satisfy conditions (a) and (b).
  
  (2) follows in the same way from Lemma~\ref{lg2} if we choose an object $\tilde{A}$ such that $A_i\Sb\tilde{A}_{\gP_i}$ for all $i$, 
  set $C=\tilde{A}\+B$ and apply the functor $\cA(C,\_)$.
  
  (3) is deduced in the same way from Lemma~\ref{lg3}, setting $C=A\+B$.
  \end{proof}
  
   We transfer to a \lof\ $\qR$-category $\cA$ some results on genera of lattices over orders. 
  
  \begin{thm}\label{g2} 
   \begin{enumerate}      
   \item   If $A_\gP\Sb B_\gP$ for all prime $\gP$, there is an object $A'\in G(A)$ such that $A'\Sb B$. 
   
   \item\emph{(Roiter addition theorem)}  Let $QA\in\add QB$, $A'\in G(A)$. There is an object $B'\in G(B)$ such that $A\+B\simeq A'\+B'$.
   
   \item  If $A'\in G(A)$, then $A'\Sb A\+A$.
   
   \item  Let $A',A''\in G(A)$. There is $A'''\in G(A)$ such that $A'\+A''\simeq A\+A'''$.
   \end{enumerate}
  \end{thm}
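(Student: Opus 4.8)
The plan is to reduce all four assertions to statements about finitely generated projective modules over a single order and then to feed them into the patching results of this section. First I would dispose of the torsion part: by the torsion decomposition of Corollary~\ref{15} together with the $\gP$-coprimary splitting discussed above, every object splits as $A^t\+A^f$, the genus of a torsion object is a single point (a torsion object is determined by its localizations, $\cA^t$ being a \ksc), and both $\simeq$ and $\Sb$ respect this decomposition. Hence it suffices to treat torsion reduced objects, and then, replacing $\cA$ by $(\cA^f)^0$ exactly as in the proof of Theorem~\ref{lg}(1) and as allowed by Proposition~\ref{12}, I may assume $\cA$ is semiprime and torsion free. Choosing one object $C$ admitting all objects occurring in a given assertion as direct summands and putting $\La=\End_\cA C$, Lemma~\ref{11} identifies $\add C$ with $\proj\La$; under this equivalence the localization $A_\gP$ corresponds to $\cA(C,A)_\gP$, the genus $G(A)$ to the genus of the projective lattice $\cA(C,A)$, and $\Sb$ to ``is a direct summand of.'' Thus (1)--(4) become the classical genus theorems for projective $\La$-lattices, for which Lemmas~\ref{lg1}--\ref{lg3} (equivalently the three parts of Theorem~\ref{lg}) supply precisely the gluing that is needed.

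For (1) I would argue as follows. From $A_\gP\Sb B_\gP$ for all $\gP$ one gets $QA\Sb QB$, so fix a rational splitting of $QB$ with $QA$-summand; it is integral at all but finitely many primes $\lst\gP r$. At each $\gP_i$ use the given local splitting of $B_{\gP_i}$, and apply Theorem~\ref{lg}(2) to manufacture an object $A'$ with $A'_{\gP_i}\simeq A_{\gP_i}$ and $A'_\gQ\simeq A_\gQ$ elsewhere, so that $A'\in G(A)$. To see $A'\Sb B$ I would glue the local split monomorphisms $A'_{\gP_i}\to B_{\gP_i}$ and the generic one at the remaining primes into a global $\io:A'\to B$, and likewise a retraction $\pi:B\to A'$, by Theorem~\ref{lg}(3); since $(\pi\io)_\gP=1_{A'_\gP}$ for every $\gP$ and a torsion free morphism module is the intersection of its localizations, $\pi\io=1_{A'}$, whence $A'\Sb B$. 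The delicate point is that Theorem~\ref{lg}(3) requires the local maps to share a common rational value, so I must first normalize the local splittings so that their rational parts all coincide with the fixed one; this is possible because any two rational splittings off the same summand differ by a rational automorphism, which can be absorbed into the choice of local data.

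Statement (2) is the real content. Since $A'\in G(A)$ we have $QA\simeq QA'$, and $A'$ differs from $A$ only at finitely many primes $\lst\gP r$; I would induct on $r$ and so reduce to a single bad prime $\gP$. Over $\qR_\gP$ we have $A'_\gP\simeq A_\gP$, hence trivially $A_\gP\+B_\gP\simeq A'_\gP\+B_\gP$, and the problem is purely global: to produce a \emph{single} isomorphism $A\+B\ito A'\+B'$ with prescribed localizations. The obstruction is that the chosen local isomorphism $A_\gP\ito A'_\gP$ and the fixed rational identification $QA\simeq QA'$ disagree by a rational automorphism $u$ of $QA$. Here the hypothesis $QA\in\add QB$ enters: writing $QB\simeq QA\+QE$, I extend $u$ to a rational automorphism of $QB$ supported on the $QA$-summand and absorb the discrepancy into the $B$-factor, which forces the complement $B'$ to be locally isomorphic to $B$ at $\gP$ and equal to $B$ elsewhere, i.e. $B'\in G(B)$. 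Assembling these now-compatible rational and local data and patching by Theorem~\ref{lg}(1) (with $a$ vanishing exactly at $\gP$) yields $A\+B\simeq A'\+B'$. I expect this absorption-of-automorphisms step to be the main obstacle, as it is the crux of Roiter's original argument.

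Finally, (3) and (4) follow formally from (2). For (3) apply (2) with $B=A$: since $QA\in\add QA$ and $A'\in G(A)$, there is $B'\in G(A)$ with $A\+A\simeq A'\+B'$, so $A'\Sb A\+A$. For (4), given $A',A''\in G(A)$, I would apply (2) to the object $X=A'$ with summand $Y=A''$ and genus mate $X'=A\in G(A')$; this is legitimate because $QA'\simeq QA''$ gives $QX\in\add QY$, and it produces $Y'=A'''\in G(A'')=G(A)$ with $A'\+A''\simeq A\+A'''$, which is exactly the assertion.
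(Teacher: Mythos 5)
Your reduction is exactly the paper's: split off the torsion part, pass to $(\cA^f)^0$ so that $\cA$ is semiprime and torsion free, set $C=A\+B$, $\La=\End_\cA C$, and use Lemma~\ref{11} to translate everything into statements about projective $\La$-lattices. The divergence is in what happens next: at this point the paper simply invokes the classical genus theorems for lattices over orders --- \cite[31.12]{cr1} for (1) and the Roiter addition theorem \cite[31.28]{rei} for (2) --- whereas you attempt to rederive them from Lemmas~\ref{lg1}--\ref{lg3}. For (2) this attempt has a genuine gap. First, $QA\in\add QB$ only means $QA\Sb nQB$ for some $n$, not $QB\simeq QA\+QE$, so the decomposition you use to ``absorb'' the discrepancy is not available as stated. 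Second, and more seriously, the absorption construction at the bad prime $\gP$ --- twisting $B_\gP$ by a rational automorphism so as to make the local and rational identifications compatible --- only produces an object $A'\+B'$ whose localizations agree with those of $A\+B$ at every prime, i.e.\ an object in $G(A\+B)$. That is trivially true and is not the theorem: the entire content of Roiter's addition theorem is the passage from ``same genus'' to an actual isomorphism $A\+B\simeq A'\+B'$, which in all known proofs requires an approximation or counting argument (choosing an exact sequence $0\to N'\to M\+N\to T\to 0$ with $T$ cyclic and $\ann T$ coprime to prescribed ideals, as in \cite[31.28]{rei}). Theorem~\ref{lg}\,(1) cannot supply this: with a single bad prime ($r=1$) it outputs the tautology $A_1\simeq A$, and for general $r$ it does not let you prescribe the isomorphism class of $A'$ inside its genus, only its genus. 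Since your (3) and (4) are formal consequences of (2) (correctly so, and as in the paper), they inherit the gap.

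The sketch of (1) is closer to being repairable but the step you yourself flag is not resolved: normalizing the local splittings so that their rational parts coincide cannot be done by post-composing with a rational automorphism of $QB$, since that destroys integrality at $\gP_i$ (the image of $B_{\gP_i}$ is no longer $B_{\gP_i}$). The standard fix is either the approximation argument of \cite[31.12]{cr1} (approximate the local split monomorphisms $\gP_i$-adically by a single global morphism, which is then still split at each $\gP_i$), or the cleaner route of building a sublattice $M'\sbe N$ with $M'\in G(M)$ that is locally a direct summand everywhere and observing that $0\to M'\to N\to N/M'\to0$ then splits by \cite[3.20]{rei}, exactly as in the proof of Lemma~\ref{lg1}. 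Either way, for both (1) and (2) you should either import the classical theorems as the paper does, or supply the missing approximation arguments; the patching lemmas alone do not carry the load.
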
  
  \begin{proof}  In view of Proposition~\ref{12}, we may suppose that $\cA$ is semiprime.
  If $A$ is torsion, all claims are trivial. So we may suppose that $\cA$ is torsion free.
 We use Lemma~\ref{11} for $C=A\+B$. Set $\La=\End_\cA C$, $M=\cA(C,A)$ and $N=\cA(C,B)$. 
 Then $\La$ is a semiprime order and $M,N$ are projective $\La$-lattices. 
  
  (1) $M_\gP\Sb N_\gP$ for all $\gP$, hence there is a lattice $M'\in G(M)$ such that $M'\Sb N$ \cite[31.12]{cr1}. 
  This lattice is projective, as so is $M$ \cite[3.23]{rei}, hence there is an object $A'$ such that $\cA(C,A')\simeq M'$. 
  Then $A'\in G(A)$ and $A'\Sb B$.
  
  \smallskip
  (2) In this situation $QC\in\add QB$, hence $Q\La\in\add QN$, so $N$ is a faithful $\La$-module. Set $M'=\cA(C,A')$, then
  $M'\in G(M)$. By Roiter addition theorem \cite[31.28]{rei}, there is a lattice $N'\in G(N)$ such that $M\+N\simeq M'\+N'$. Again, $N'$ is 
  projective, hence $N'\simeq\cA(C,B')$, where $B'\in G(B)$ and $A\+B\simeq A'\+B'$.
  
  \smallskip
  (3) and (4) follow from (2) (in (3) set $A=B$).
  \end{proof}
  
  \begin{cor}\label{g3} 
 Define an equivalence relation $\equiv$ on the set $G(A)$ such that $A'\equiv A''$ means $A'\+A\simeq A''\+A$. Denote by $\cl(A)$ the set
  of equivalence classes under this relation and by $c(A')$ the class of $A'$ in $\cl(A)$. Define an algebraic operation $+$ on $\cl(A)$
  setting $c(A')+c(A'')=c(A''')$ if $A'\+A''\simeq A\+A'''$. Then
   $(\cl(A),+)$ is an abelian group and $c(A)$ is its neutral element.
 
  \smallskip  
   \emph{Note that this group does not depend on the choice of an object $A$ in the genus (see Remark~\ref{g6} below). 
   We call it the \emph{class group} of the object $A$ or of the genus $G(A)$ and denote its cardinality by $cl(A)$. If $\cA=\La\lat$,
   it is the group of $\be$-classes of the genus $G(A)$ from \cite{roi} (see also \cite[35.5]{rei} in the case of maximal orders).}
  \end{cor}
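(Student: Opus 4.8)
The plan is to verify the group axioms directly, reducing each either to the addition theorem Theorem~\ref{g2} or to one cancellation statement. First I would normalise the setting exactly as in the proof of Theorem~\ref{g2}: by Proposition~\ref{12} neither isomorphism classes nor direct-sum decompositions change on passing to $\cA^0$, so I may assume $\cA$ semiprime; the torsion case is trivial (the genus of a torsion object is a single isomorphism class), so I may assume $\cA$ torsion free. By Theorem~\ref{g2}(3) every $A'\in G(A)$ is a summand of $A\+A$, so $G(A)\sbe\add A$, and Lemma~\ref{11} with $\La=\End_\cA A$ identifies $\add A$ with $\proj\La$ and $G(A)$ with the genus of the $\La$-lattice $\La$, sending $A'\in G(A)$ to a projective lattice $P$ with $P_\gP\simeq\La_\gP$ for all $\gP$. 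Existence of $c(A')+c(A'')$ is Theorem~\ref{g2}(4), and its value depends only on $A',A''$: if $A'\+A''\simeq A\+A'''\simeq A\+\tilde A'''$ with both complements in $G(A)$, then $A'''\equiv\tilde A'''$. Commutativity is immediate from $A'\+A''\simeq A''\+A'$, and $c(A)$ is neutral because one may take $A'''=A'$ in the defining isomorphism $A'\+A\simeq A\+A'$ for $c(A')+c(A)$.

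For inverses, given $A'\in G(A)$, Theorem~\ref{g2}(3) yields $A\+A\simeq A'\+B$ for some $B$. I would first check $B\in G(A)$ locally: completing at a prime $\gP$ and using $\wh{A'}_\gP\simeq\hA_\gP$ (as $A'\in G(A)$) gives $\hA_\gP\+\hA_\gP\simeq\hA_\gP\+\hB_\gP$; since $\hqR_\gP$ is complete local, $\hcA_\gP$ is a \ksc, so cancellation gives $\hB_\gP\simeq\hA_\gP$, whence $B_\gP\simeq A_\gP$ (isomorphism of $\qR_\gP$-lattices can be tested after completion \cite{cr1}). Thus $B\in G(A)$, and $A'\+B\simeq A\+A$ shows $c(A')+c(B)=c(A)$; hence $c(B)=-c(A')$.

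It remains to prove associativity and that $c(A')+c(A'')$ depends only on the classes $c(A'),c(A'')$. Unwinding the defining isomorphisms, both reduce to a single cancellation assertion: for $C,D\in G(A)$,
\[
  C\+A\+A\simeq D\+A\+A\ \Longrightarrow\ C\+A\simeq D\+A .
\]
For independence of representatives one chains $A'\+A\simeq B'\+A$ and $A''\+A\simeq B''\+A$ with the two defining isomorphisms to reach $C\+A\+A\simeq D\+A\+A$; associativity likewise leads to objects $D_1,D_2\in G(A)$ with $D_1\+A\+A\simeq D_2\+A\+A$. To prove the displayed implication I would transport it through Lemma~\ref{11}: the object $C\+A$ corresponds to a projective $\La$-lattice $M$ with $M_\gP\simeq\La_\gP^2$ for all $\gP$, hence of local free rank $2\ge\dim\qR+1$; likewise $D\+A\mapsto N$, and the hypothesis becomes $M\+\La\simeq N\+\La$. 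The crux, and the only genuine obstacle, is then Bass's cancellation theorem \cite{bass}: a projective summand may be cancelled once the complementary lattice has local free rank everywhere $\ge\dim\qR+1$, so $M\simeq N$, i.e. $C\+A\simeq D\+A$. This cancellation is unconditional and independent of the Jacobinski cancellation of Theorem~\ref{g4}, so no circularity arises. Assembling the pieces, $(\cl(A),+)$ is an abelian group with neutral element $c(A)$.
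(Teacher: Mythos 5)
Your proof is correct, and it is worth comparing with the paper, which in fact prints no proof at all for Corollary~\ref{g3}: the corollary is presented as an immediate consequence of Theorem~\ref{g2}, with the classical case deferred to Roiter's paper \cite{roi}. The only points that genuinely need an argument beyond Theorem~\ref{g2}\,(3),(4) are the ones you isolate: that the operation descends to $\equiv$-classes and is associative, both of which reduce to cancelling copies of $A$ from $C\+kA\simeq D\+kA$ with $C,D\in G(A)$. The paper's own machinery for exactly this is the Jacobinski cancellation Theorem~\ref{g4} together with Corollary~\ref{g5}\,(1),(3) (applied to $A\+B$ with $QA\in\add QB$, so that all $n_i>1$) --- but those results appear \emph{after} Corollary~\ref{g3} in the text. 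You instead transport the problem through Lemma~\ref{11} to projective $\La$-lattices and invoke Bass's cancellation theorem from \cite{bass} for projectives whose localizations contain a free summand of rank $\ge\dim\qR+1=2$; this is self-contained, needs no forward reference, and makes explicit that the group structure does not depend on the arithmetic refinements of Section~\ref{ar}. Two small caveats. First, you must use the \emph{local} form of Bass's theorem (free rank $\ge 2$ at every maximal ideal): the object $C\+A$ contains only one global free summand $\La$, so the version requiring a global summand $\La^{d+1}$ would not apply directly; the local form is the one proved in \cite{bass}, so this is fine, but it should be said. Second, chaining $A'\+A\simeq B'\+A$ and $A''\+A\simeq B''\+A$ with the two defining isomorphisms actually yields $C\+3A\simeq D\+3A$ rather than $C\+2A\simeq D\+2A$; the same cancellation disposes of it, so this is only an arithmetic slip. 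Your reductions (to the semiprime, torsion-free case; torsion genera being singletons; $G(A)\sbe\add A$ via Theorem~\ref{g2}\,(3)) and your verification that the complement $B$ in $A\+A\simeq A'\+B$ lies in $G(A)$ by passing to the completions, where Krull--Schmidt holds, all match the paper's standing conventions.
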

  
  Now we prove the \emph{cancellation theorem} for genera.
  
  \begin{thm}[Jacobinski cancellation theorem]\label{g4} 
    Let $A$ be such an object that $Q\End_\cA A/\nil Q\End_\cA A\simeq \prod_{i=1}^m\Mat(n_i,\qD_i)$, where $\qD_i$ are skewfields, and
    for every $i$ either the skewfield $\qD_i$ is commutative or $n_i>1$ \(or both\). If $A',A''\in G(A)$, then $A'\equiv A''$ \iff $A'\simeq A''$.
  \end{thm}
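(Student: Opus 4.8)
The plan is to reduce, via Lemma~\ref{11}, to the classical Jacobinski cancellation theorem for lattices over the order $\La=\End_\cA A$, exactly in the spirit of the proof of Theorem~\ref{g2}. The implication $A'\simeq A''\Rightarrow A'\equiv A''$ is immediate from the definition of $\equiv$, so only the converse requires work.

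First I would carry out the standard reductions. By Proposition~\ref{12} none of the ingredients of the statement (isomorphism classes, genera, the relation $\equiv$, and the algebra $Q\End_\cA A/\nil Q\End_\cA A$) changes when one passes from $\cA$ to $\cA^0$, so I may assume $\cA$ semiprime. Writing $A=A^t\+A^f$ as in Corollary~\ref{15}\,(2), the torsion summand has $Q\End_\cA A^t=0$, so the hypothesis on $Q\End_\cA A$ concerns only $A^f$; moreover a torsion object has a one-element genus (its $\gP$-coprimary components are recovered from its localizations, and $\cA^t$ is a \ksc), so the torsion parts of any $A',A''\in G(A)$ coincide with $A^t$ and cancel automatically by Corollary~\ref{15}\,(3). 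Hence I may assume $\cA$ torsion reduced and semiprime, i.e.\ torsion free, so that $\La=\End_\cA A$ is a semiprime $\qR$-order.

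Next I apply Lemma~\ref{11} with $C=A$. Since $A',A''\in G(A)$ satisfy $A'\Sb A\+A$ and $A''\Sb A\+A$ by Theorem~\ref{g2}\,(3), both lie in $\add A$ and correspond to projective $\La$-lattices $M'=\cA(A,A')$, $M''=\cA(A,A'')$, while $A$ itself corresponds to the free module $\La_\La$. Because the functor $\cA(A,\_)$ is compatible with localization ($\cA(A,A')_\gP=\cA_\gP(A_\gP,A'_\gP)$ and $\La_\gP=\End_{\cA_\gP}A_\gP$), the condition $A'\in G(A)$ translates into $M'\in G(\La)$, and likewise for $M''$. Finally $A'\equiv A''$, that is $A'\+A\simeq A''\+A$, becomes $M'\+\La\simeq M''\+\La$ after applying $\cA(A,\_)$.

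It remains to cancel the free summand $\La$. Since $\cA$ is now semiprime we have $\nil Q\La=Q\nil\La=0$, so $Q\La$ is semisimple and equals $Q\End_\cA A\simeq\prod_{i=1}^m\Mat(n_i,\qD_i)$; the assumption that each $\qD_i$ is commutative or has $n_i>1$ excludes any totally definite quaternion component, which is precisely the Eichler condition for $Q\End_\La\La=Q\La$. The classical Jacobinski cancellation theorem \cite{cr1} then yields $M'\simeq M''$, whence $A'\simeq A''$. The only genuinely substantive point—everything else being the translation dictionary of Lemma~\ref{11}—is matching the categorical hypothesis on $Q\End_\cA A$ with the arithmetic Eichler condition and checking that the genus correspondence is compatible with localization; I expect this verification, rather than any new estimate, to be the crux.
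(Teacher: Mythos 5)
Your proposal is correct and follows essentially the same route as the paper: reduce to the semiprime, torsion reduced (hence torsion free) case, apply Lemma~\ref{11} with $C=A$ to translate $A',A''$ into projective lattices $M',M''\in G(\La)$ over the semiprime order $\La=\End_\cA A$, observe that $A'\equiv A''$ becomes $M'\equiv M''$, and invoke the classical Jacobinski cancellation theorem \cite[51.24]{cr2}. The only quibble is your closing identification of the hypothesis ``$\qD_i$ commutative or $n_i>1$'' with the Eichler condition: over a general Dedekind domain this hypothesis is what the cited theorem uses directly, and the Eichler-condition refinement is reserved for the arithmetic case (Theorem~\ref{ac2}).
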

  \begin{proof}
   As before, we may suppose that $\cA$ is torsion reduced and semiprimary. Then $\La=\End_\cA A$ is a semiprimary $\qR$-order, 
   $M'=\cA(A,A')$ and $M''=\cA(A,A'')$ are projective  $\La$-lattices, $Q\La\simeq Q\End_\cA A$, $M',M''\in G(\La)$ and, if $A'\equiv A''$, also 
   $M'\equiv M''$. By the Jacobinski cancellation theorem \cite[51.24]{cr2}, if $M'\equiv M''$, then $M'\simeq M''$, hence $A'\simeq A''$.
  \end{proof}
  
  \begin{cor}\label{g5} 
    Let $A,A'$ and $B$ be such objects that $QA\in\add QB$ and $A'\in G(A)$.
  \begin{enumerate}
   \item  If $A''\in G(A)$ and $A'\equiv A''$, then $A'\+B\simeq A''\+B$.
   \item If $A\+B\simeq A'\+B'$, where $B'\equiv B$ in $G(B)$, then $A\+B\simeq A'\+B$.
   \item If $A\+B\simeq A'\+B$, where $B\in\add A$, then $A\equiv A'$ in $G(A)$.
  \end{enumerate}    
  \end{cor}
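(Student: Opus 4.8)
The plan is to reduce all three assertions, exactly as in the proofs of Theorems~\ref{g2} and \ref{g4}, to statements about projective lattices over a single order, and only then to run the genus machinery. First I would apply Proposition~\ref{12} to replace $\cA$ by the semiprime category $\cA^0$, and use the decomposition of Corollary~\ref{15}(2),(5) together with the fact that $\cA^t$ is a \ksc\ to discard the torsion part (there genus coincides with isomorphism and all three claims are trivial). Thus I may assume $\cA$ semiprime and torsion reduced, hence torsion free. Since $A',A''\in G(A)$ satisfy $A'\Sb A\+A$ and $A''\Sb A\+A$ by Theorem~\ref{g2}(3), and likewise $B'\Sb B\+B$, all objects occurring lie in $\add C$ for $C=A\+B$. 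Setting $\La=\End_\cA C$, a semiprime $\qR$-order, Lemma~\ref{11} gives an equivalence $\add C\ito\proj\La$, $X\mapsto P_X=\cA(C,X)$, under which direct sums, isomorphism, the relation $\Sb$, localization (hence genus), the relation $\equiv$, and passage to $Q$ are all preserved. Writing $P=P_A$, $N=P_B$, and so on, the three claims become the corresponding statements about projective $\La$-lattices in a genus; crucially, $QA\in\add QB$ translates into $Q\La=QP\+QN\in\add QN$, i.e.\ $QN$ is a progenerator of the semisimple algebra $Q\La$, while $B\in\add A$ translates into $QP$ being a progenerator.

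For part (1) I would build the desired isomorphism by the Roiter addition theorem (Theorem~\ref{g2}(2)), which is applicable precisely because $QA\in\add QB$. Applying it to $A'$ and to $A''$ yields objects $B_1,B_2\in G(B)$ with $A\+B\simeq A'\+B_1$ and $A\+B\simeq A''\+B_2$. Read in the genus $G(A\+B)$ via the class group of Corollary~\ref{g3}, these isomorphisms pin down the classes $c(B_1),c(B_2)$ as the images of $c(A'),c(A'')$ under the Roiter correspondence; since $A'\equiv A''$ means $c(A')=c(A'')$, these classes coincide, and feeding this back through the two relations produces $A'\+B\simeq A''\+B$.

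Parts (2) and (3) both reduce to a single cancellation, and this is where the real difficulty lies. For (2), the relation $B'\equiv B$ gives $B'\+B\simeq B\+B$, which enlarges the hypothesis $A\+B\simeq A'\+B'$ to $A\+2B\simeq A'\+2B$, so the problem collapses to cancelling one copy of $B$. For (3), the relation $B\in\add A$ gives $B\+B_0\simeq nA$ for some $B_0$, so adding $B_0$ to $A\+B\simeq A'\+B$ produces $A'\+nA\simeq A\+nA$ and the problem collapses to cancelling copies of $A$. The hard part will be that neither cancellation can be supplied by the Jacobinski theorem (Theorem~\ref{g4}), whose Eichler-type hypothesis is \emph{not} assumed here. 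What saves the situation is that in each case the module to be cancelled is, after the reduction above, a \emph{progenerator} lattice over $\La$ ($N$ in case (2), $P$ in case (3)) that remains present as a summand on both sides with multiplicity at least two; over the one-dimensional base $\qR$ this is exactly the regime of Bass--Serre cancellation, which requires no Eichler condition. Carrying this out at the level of the lattices $P,P',N$ (using the genus-cancellation results of \cite[\S31]{cr1} and \cite{rei}) gives $P\+N\simeq P'\+N$, equivalently $A\+B\simeq A'\+B$ for (2); for (3) the same cancellation yields $P'\+P\simeq P\+P$, i.e.\ $A'\equiv A$. (Alternatively one may extract $A'\equiv A$ first and then close (2) by invoking part (1).)
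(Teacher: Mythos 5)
Your proposal misses the one observation on which the paper's entire (three‑line) proof rests: the hypothesis $QA\in\add QB$ is there precisely so that the object $A\+B$ \emph{satisfies the hypothesis of Theorem~\ref{g4}}. Every simple constituent of $QA$ also occurs in $QB$, hence occurs with multiplicity at least $2$ in $Q(A\+B)$, i.e.\ the relevant $n_i$ are $>1$. The paper then simply adds $A\+B$ to both sides --- e.g.\ in (1), $A'\+A\simeq A''\+A$ gives $(A'\+B)\+(A\+B)\simeq(A''\+B)\+(A\+B)$, so $A'\+B\equiv A''\+B$ in $G(A\+B)$ --- and invokes Theorem~\ref{g4} for the object $A\+B$; part (2) is handled identically, and (3) is reduced to $B=nA$ and an easy induction. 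You explicitly discard this route on the grounds that the ``Eichler-type hypothesis is not assumed here,'' which is a misreading: the hypothesis of Theorem~\ref{g4} is the disjunction ``$n_i>1$ or $\qD_i$ commutative,'' and the clause $n_i>1$ is exactly what $QA\in\add QB$ buys.

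The substitutes you propose do not close the gap. In (1), the ``Roiter correspondence'' $c(A')\mps c(B_1)$ is not established anywhere: checking that it is well defined (that $A\+B\simeq A'\+B_1\simeq A'\+B_1'$ forces $B_1\equiv B_1'$, and that $A'\equiv A''$ forces $B_1\equiv B_2$) already requires cancelling a copy of $A'$ or $A''$ from a direct sum, i.e.\ an instance of the very cancellation being proved; and even granting $B_1\+B\simeq B_2\+B$, passing from $A'\+B_1\simeq A''\+B_2$ to $A'\+B\simeq A''\+B$ needs one further cancellation of $B_1$. In (2) and (3), Bass--Serre cancellation over a one-dimensional base does \emph{not} let you cancel a single copy of a progenerator $N$ from $M\+N\simeq M'\+N$ unless $M$ itself contains $N\+N$ (or a free summand of rank $2$); here the retained module is $P\+N$, which contains only one visible copy of $N$, and $P$ need not contain $N$ at all since only $QP\in\add QN$ is known. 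Swan's stably free, non-free ideals over quaternionic group rings show that such single-copy cancellation genuinely fails in the absence of a multiplicity or Eichler hypothesis. Your instinct that ``multiplicity two saves the day'' is correct, but the theorem that converts it into cancellation is Jacobinski's, that is, Theorem~\ref{g4} applied to $A\+B$ --- exactly the step you declined to take.
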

  \begin{proof}
  (1)   If $A'\+A\simeq A''\+A$, then $(A'\+B)\+(A\+B)\simeq(A''\+B)\+(A\+B)$. As $QA\in\add QB$, the object $A\+B$ satisfies the conditions
   of Theorem~\ref{g4} (namely, all $n_i>1$), whence $A'\+B\simeq A''\+B$.
   
   (2)  Let $B\+B\simeq B'\+B$. Then $(A\+B)\+(A\+B)\simeq(A'\+B)\+(A\+B)$. Again the object $A\+B$ satisfies the conditions
   of Theorem~\ref{g4}, whence $A\+B\simeq A'\+B$.
   
   (3) immediately reduces to the case $B=nA$, where it is proved by an easy induction.
  \end{proof}
  
  \begin{rmk}\label{g6} 
    Item (1) of this corollary immediately implies that the relation $\equiv$ on the genus $G(A)$ does not depend on the choice of 
    the object $A$ in this genus: just take for $B$ any object from $G(A)$.
  \end{rmk}
     
   \subsection{Arithmetic case}
   \label{ar}
   
   We call a Dedekind ring $\qR$ \emph{arithmetic} if its field of fractions $\qQ$ is a global field, i.e. either a field of algebraic numbers 
   or a field of algebraic functions of one variable over a finite field. Then the preceding results can be precised. 
   
   \begin{thm}\label{ar1} 
    \begin{enumerate}
    \item  $g(A)<\8$ for every object $A$.
    \item  If $QA\in\add QB$, then $g(A\+B)\le cl(B)$.
    \end{enumerate}
   \end{thm}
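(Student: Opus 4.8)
The plan is to reduce both assertions, via Lemma~\ref{11}, to statements about projective lattices over an $\qR$-order, and then to invoke the classical Jordan--Zassenhaus finiteness together with the genus-theoretic results already established in this section. By Proposition~\ref{12} I may assume $\cA$ is semiprime, and by Corollary~\ref{15} and the $\gP$-coprimary decomposition it suffices to treat torsion and torsion reduced objects separately. For a torsion object the endomorphism ring is artinian and $\cA^t$ is a \ksc, so its genus is a single isomorphism class and its class group is trivial; hence I may assume throughout that $\cA$ is torsion reduced, and therefore torsion free.

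For (1) I would take $C=A$ and $\La=\End_\cA A$, a finitely generated torsion free $\qR$-algebra, i.e.\ an $\qR$-order in the finite dimensional $\qQ$-algebra $Q\End_\cA A$. By Lemma~\ref{11} the functor $A'\mapsto\cA(A,A')$ is an equivalence $\add A\ito\proj\La$ sending $A$ to $\La$ and commuting with localization. By Theorem~\ref{g2}(3) every $A'\in G(A)$ is a summand of $A\+A$, so $G(A)\sbe\add A$, and under the equivalence $G(A)$ corresponds precisely to the genus of the regular module $\La$, i.e.\ to the projective lattices $P$ with $P_\gP\simeq\La_\gP$ for all $\gP\in\max\qR$. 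Since $\qR$ is arithmetic, the Jordan--Zassenhaus theorem \cite{cr1} bounds the number of isomorphism classes of $\La$-lattices of a fixed rational type; hence this genus is finite and $g(A)<\8$.

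For (2) I would first show that every $C\in G(A\+B)$ is isomorphic to $A\+B'$ for some $B'\in G(B)$. As $A_\gP\Sb C_\gP$ for all $\gP$, Theorem~\ref{g2}(1) gives $A'\in G(A)$ with $A'\Sb C$, say $C\simeq A'\+D$; comparing localizations, $A'_\gP\+D_\gP\simeq A_\gP\+B_\gP$ with $A'_\gP\simeq A_\gP$, and cancelling over the complete local rings (where Krull--Schmidt holds) yields $D_\gP\simeq B_\gP$, so $D\in G(B)$. Since $QA\in\add QB=\add QD$ and $A\in G(A')$, the Roiter addition theorem (Theorem~\ref{g2}(2)) provides $B'\in G(D)=G(B)$ with $A'\+D\simeq A\+B'$, whence $C\simeq A\+B'$. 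Thus $B'\mapsto[A\+B']$ is a surjection from $G(B)$ onto the set of isomorphism classes in $G(A\+B)$.

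It then remains to show that this surjection factors through the class group, i.e.\ that $B'\equiv B''$ forces $A\+B'\simeq A\+B''$; this is the crux. Starting from $B'\+B\simeq B''\+B$ and adding $A\+A$, I get $(A\+B')\+(A\+B)\simeq(A\+B'')\+(A\+B)$, that is $A\+B'\equiv A\+B''$ in $G(A\+B)$. Because $QA\in\add QB$, the object $A\+B$ satisfies the Eichler hypothesis of Theorem~\ref{g4} (all multiplicities $n_i>1$, exactly as used in the proof of Corollary~\ref{g5}), so Jacobinski cancellation upgrades this genus relation to an isomorphism $A\+B'\simeq A\+B''$. Hence $B'\mapsto[A\+B']$ descends to a surjection $\cl(B)\to\{\text{iso.\ classes in }G(A\+B)\}$, giving $g(A\+B)\le cl(B)$. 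The main obstacle is precisely this factorization: one cannot cancel $A$ or $B$ directly, and the passage from the genus relation to a genuine isomorphism relies essentially on the Eichler condition for $A\+B$ furnished by $QA\in\add QB$ and on Theorem~\ref{g4}; the earlier splitting step also quietly uses Krull--Schmidt over the completions to place the complementary summand $D$ in $G(B)$.
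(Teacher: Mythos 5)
Your proposal is correct and follows essentially the same route as the paper: part (1) reduces to projective lattices over a semiprime torsion-free order via Lemma~\ref{11} and invokes Jordan--Zassenhaus, and part (2) uses Theorem~\ref{g2}(1) and the Roiter addition theorem to write every $C\in G(A\+B)$ as $A\+B'$ with $B'\in G(B)$, then applies Jacobinski cancellation (Theorem~\ref{g4}, with all $n_i>1$ because $QA\in\add QB$) to show the map descends to $\cl(B)$. The only difference is that you spell out some reductions the paper leaves implicit (e.g.\ that the complementary summand $D$ lies in $G(B)$), which is harmless.
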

   \begin{proof}
     (1) is obviously reduced to the semiprime and torsion free case, where it follows from the Jordan--Zassenhaus theorem \cite[26.4]{rei}.
     
     (2) By Theorem~\ref{g2}\,(1), for every $C\in G(A\+B)$ there is $A'\in G(A)$ such that $C\simeq A'\+B'$. Then $B'\in G(B)$. 
     By item (2) of the same theorem, there is $B''\in G(A)$ such that $C\simeq A\+B''$. Moreover, if $B'\equiv B''$ in $G(B)$, i.e.
     $B'\+B\simeq B''\+B$, then $(A\+B')\+(A\+B)\simeq(A\+B'')\+(A\+B)$, whence $A\+B'\simeq A\+B''$ by Theorem~\ref{g4}.
   \end{proof}
   
   If $v$ is a valuation of the field $\qQ$, we denote by $\hqQ_v$ the completion of $\qQ$ with respect to $v$. We say that $v$ is
   \emph{infinite with respect to $\qR$} if it is not equivalent to the $\gP$-adic valuation for any $\gP\in\max\qR$.
   Let $\qD$ be a finite dimensional division algebra over the field $\qQ$. We say that $\qD$ \emph{satisfies the Eichler condition} 
   if for some valuation $v$ of the field $\qQ$ that is infinite with respect to $\qR$ the $\hqQ_v$-algebra $\hqQ_v\*_\qQ\qD$ 
   is not a product of skewfields. Note that if $\qR$ is the ring of algebraic integers from $\qQ$, the only exceptions are when the center 
   $\qK$ of $\qD$ is a totally real field, $\dim_\qK\qD=4$ and $\hat{\qD}_w$ is the skewfield of quaternions for every infinite valuation $w$
   of the field $\qK$.
     
   \begin{thm}\label{ac2} 
     Jacobinski cancellation theorem~\ref{g4} remains valid if the condition $n_i>1$ holds true for those $\qD_i$ that do not satisfy the Eichler condition.
   \end{thm}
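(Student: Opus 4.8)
The plan is to repeat almost verbatim the reduction carried out in the proof of Theorem~\ref{g4} and then to invoke the sharper, Eichler-sensitive form of the Jacobinski cancellation theorem. First I would pass, via Proposition~\ref{12} and Corollary~\ref{15}, to the torsion reduced and semiprime case, so that $\La=\End_\cA A$ becomes a semiprimary $\qR$-order whose rational envelope $Q\La\simeq Q\End_\cA A$ is semisimple. Applying the functor $\cA(A,\_)$ of Lemma~\ref{11} turns $A',A''\in G(A)$ into projective $\La$-lattices $M'=\cA(A,A')$ and $M''=\cA(A,A'')$ lying in the genus $G(\La)$, with $A'\equiv A''$ forcing $M'\equiv M''$ and, conversely, $M'\simeq M''$ forcing $A'\simeq A''$. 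All of this is identical to Theorem~\ref{g4}; nothing in the reduction uses the stronger hypothesis.

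The only new ingredient is that the cancellation theorem being cited, \cite[51.24]{cr2}, is in fact valid under the weaker hypothesis that $n_i>1$ is required only for those simple components whose underlying skewfield fails the Eichler condition. To feed our situation into it I would identify the decomposition $Q\La\simeq\prod_{i=1}^m\Mat(n_i,\qD_i)$ with the one in the statement; this is legitimate because $Q\La\simeq Q\End_\cA A$ and, after the passage to $(\cA^f)^0$, the nilradical has been killed, so the given decomposition of $Q\End_\cA A/\nil Q\End_\cA A$ is exactly the Wedderburn decomposition of $Q\La$. Since $\qR$ is arithmetic, $\qQ$ is a global field and the Eichler-refined cancellation of \cite{cr2} is available.

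The step that needs care is the verification that the Eichler condition on the $\qD_i$ as defined in this subsection---using valuations of $\qQ$ that are infinite with respect to $\qR$---is precisely the Eichler condition governing cancellation for $\La$-lattices in \cite{cr2}. Both notions refer to the same skewfields over the same global field $\qQ$ and to the same set of infinite places, namely those not arising from $\max\qR$, so they coincide; hence no separate argument is needed. Once this identification is in place, \cite[51.24]{cr2} yields $M'\simeq M''$, and therefore $A'\simeq A''$, completing the proof exactly as in Theorem~\ref{g4}. I expect the matching of the two formulations of the Eichler condition to be the only genuine point of friction, the rest being a transcription of the earlier argument.
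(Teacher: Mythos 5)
Your proposal is correct and follows essentially the same route as the paper: the paper's proof of Theorem~\ref{ac2} simply observes that the reduction in Theorem~\ref{g4} carries over verbatim and that \cite[51.24]{cr2} is already stated in the Eichler-refined arithmetic form, which is exactly your argument. Your extra remark about matching the two formulations of the Eichler condition is a reasonable point of diligence that the paper leaves implicit.
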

   \begin{proof}
    It is just the situation when the Jacobinski cancellation theorem is valid in the arithmetic case \cite[51.24]{cr2}. So the same proof can 
    be applied.
   \end{proof}
   
   \section{$K_0$, local case.}
   \label{loc} 
   
   In this section $\qR$ is a \dvr\ with the maximal ideal $\gM$, $\hqR$ is its $\gM$-adic completion and $\hqQ$ is the field of fractions
   of $\hqR$. We have a diagram of categories and functors, commutative up to isomorphism,
   \begin{equation}\label{eloc1} 
    \begin{CD}
        \cA @>^\wedge>> \hcA \\  @VQVV  @VVQV \\ Q\cA  @>^\wedge>> Q\hcA
    \end{CD}
   \end{equation}
  The $K_0$-groups form an analogous diagram
      \begin{equation}\label{eloc2} 
    \begin{CD}
        K_0(\cA) @>>> K_0(\hcA) \\  @VVV  @VVV \\ K_0(Q\cA)  @>>> K_0(Q\hcA)
    \end{CD}
   \end{equation}
   The categories $Q\cA,\hcA$ and $Q\hcA$ are Krull--Schmidt categories, so their $K_0$-groups are free and their bases consist of 
   classes of indecomposable objects. On the other hand, the category $\cA$ need not be a \ksc\ (see Example~\ref{nks} below).
   Nevertheless, as $A\simeq B$ \iff $\hat{A}\simeq\hat{B}$ and cancellation holds true in $\hcA$, it also holds true in $\cA$.
   
   The category $\cA$ can be reconstructed from the other components of the diagram \eqref{eloc1}.
   
   \begin{thm}\label{loc1}  
   The category $\cA$ is equivalent to the \emph{pull-back} \(or \emph{recollement}\) $Q\cA\xx_{Q\hcA}\hcA$ 
   \cite[VI.1]{gab} of the categories $Q\cA$ and $\hcA$ over $Q\hcA$.\!%
   \footnote{\,Actually, it means that the diagram \eqref{eloc1} is \emph{cartesian} as a diagram of categories and functors.}
   \end{thm}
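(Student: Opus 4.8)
The plan is to construct an explicit comparison functor $\Phi\colon\cA\to\cB$, where $\cB=Q\cA\xx_{Q\hcA}\hcA$, and to show it is fully faithful and essentially surjective. Recall that an object of $\cB$ is a triple $(X,Y,\phi)$ with $X\in\ob Q\cA$, $Y\in\ob\hcA$ and $\phi\colon\widehat X\ito QY$ an isomorphism in $Q\hcA$, where $\widehat X$ is the image of $X$ under the lower (completion) functor and $QY$ the image of $Y$ under the right functor $Q$ of \eqref{eloc1}; a morphism $(X,Y,\phi)\to(X',Y',\phi')$ is a pair $(u,v)$ with $u\in Q\cA(X,X')$, $v\in\hcA(Y,Y')$ and $Qv\ccd\phi=\phi'\ccd\widehat u$. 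The commutativity of \eqref{eloc1} supplies a canonical isomorphism $\widehat{QA}\ito Q\widehat A$ for each $A$, so I set $\Phi(A)=(QA,\widehat A,\mathrm{can})$ and $\Phi(a)=(Qa,\widehat a)$.

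Full faithfulness is the assertion that, for $N=\cA(A,B)$, the canonical map identifies $N$ with the fibre product $QN\xx_{\hqQ\*_\qR N}\widehat N$, where $QN=\qQ\*_\qR N$ and $\widehat N=\hqR\*_\qR N$. This is an instance of the arithmetic fracture square and holds for every finitely generated $\qR$-module $N$. Indeed, the fundamental sequence
\[
 0\to\qR\to\qQ\+\hqR\to\hqQ\to0,\qquad (a,b)\mapsto a-b,
\]
is exact: exactness at $\qQ\+\hqR$ is $\qQ\cap\hqR=\qR$, and surjectivity is the approximation identity $\qQ+\hqR=\hqQ$. Since $\qQ$, $\hqR$ and $\hqQ$ are flat over $\qR$, tensoring with $N$ preserves exactness and gives $0\to N\to QN\+\widehat N\to\hqQ\*_\qR N\to0$, which is precisely the required cartesian property. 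For torsion $N$ this degenerates to $N\ito\widehat N$, consistent with $QN=0$.

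It remains to prove density. Every object decomposes as $A^t\+A^f$ (Corollary~\ref{15}), and both $\Phi$ and $\cB$ respect this splitting. Any triple splits off a summand $(0,Y^t,\mathrm{triv})$, with $Y^t$ the torsion part of $Y$; such a summand equals $\Phi(A^t)$ via the equivalence $\cA^t\simeq\hcA^t$ induced by completion, which is the identity on the finite-length endomorphism rings. Hence it suffices to realize triples in the torsion-reduced case, where $\La=\End_\cA C$ is an $\qR$-order and all modules below are lattices. Given $(X,Y,\phi)$, choose $C=C_1\+C_2$ with $X\Sb QC_1$ and $Y\Sb\widehat{C_2}$ and put $\La=\End_\cA C$. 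By Lemma~\ref{11} and its localized and completed versions, $\add(QC)\simeq\proj(Q\La)$, $\add(\widehat C)\simeq\proj(\widehat\La)$ and $\add(Q\widehat C)\simeq\proj(Q\widehat\La)$, where $Q\La=\qQ\*_\qR\La$, $\widehat\La=\hqR\*_\qR\La$ and $Q\widehat\La=\hqQ\*_\qR\La$. Under these the triple becomes $(P,W,g)$ with $P\in\proj(Q\La)$, $W\in\proj(\widehat\La)$ and $g\colon Q\widehat\La\*_{Q\La}P\ito Q\widehat\La\*_{\widehat\La}W$. The module-level square for $N=\La$ gives $\La=Q\La\xx_{Q\widehat\La}\widehat\La$, and I would set $M=P\xx_{Q\widehat\La\*_{\widehat\La}W}W$ and check that it is a projective $\La$-lattice with $QM\simeq P$ and $\widehat M\simeq W$ compatibly with $g$. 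Then $M\simeq\cA(C,A)$ for some $A\in\add C\sbe\cA$, whence $\Phi(A)\simeq(X,Y,\phi)$.

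The main obstacle is this last patching step. Formally it asks that finitely generated projectives glue along the ring square relating $\La,Q\La,\widehat\La,Q\widehat\La$; but this is \emph{not} a Milnor square, since neither $Q\La\to Q\widehat\La$ nor $\widehat\La\to Q\widehat\La$ is surjective, so Milnor's patching theorem is unavailable and the three points must be checked directly. The crucial one is surjectivity of $P\+W\to Q\widehat\La\*_{\widehat\La}W$, so that $0\to M\to P\+W\to Q\widehat\La\*_{\widehat\La}W\to0$ is exact; this is again forced by $\qQ+\hqR=\hqQ$, which shows that a $\qQ$-form together with a full $\hqR$-lattice span the ambient $\hqQ$-space. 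Granting exactness, tensoring with the flat $\qQ$ and $\hqR$ yields $QM\simeq P$ and $\widehat M\simeq W$, and $M$ is then a finitely generated $\La$-lattice with $\widehat M$ projective, hence projective by faithful flatness of $\widehat\La$ over $\La$. This is precisely the local, one-prime analogue of the reconstruction of a lattice from its rational and completed data underlying Lemmas~\ref{lg2}--\ref{lg3}.
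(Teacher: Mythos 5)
Your proof is correct, and for the density (essential surjectivity) part it takes a genuinely different route from the paper. Full faithfulness is handled identically in substance: the paper simply asserts that $M\to\hat M$, $M\to QM$, etc.\ form a cartesian square for every finitely generated $\qR$-module $M$, while you derive it from the exact sequence $0\to\qR\to\qQ\+\hqR\to\hqQ\to0$ and flatness — a welcome expansion of what the paper leaves implicit. For density the paper stays at the level of idempotents: it writes the two objects as $(B,e_1)$ and $(B,e_2)$ for a common $B$, uses the unit factorization $(\hat{\La})^\xx(Q\La)^\xx=(Q\hat{\La})^\xx$ (a strong-approximation fact it does not prove) to conjugate $e_1$ and $e_2$ to elements of $Q\La$ and $\hat{\La}$ with a common image in $Q\hat{\La}$, and then lifts the resulting idempotent to $\La$ via the same cartesian square; this works uniformly, with no torsion/torsion-free case split. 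You instead transport the problem to projective modules over the order $\La$ and glue the lattice directly as $M=P\xx_{Q\hat{\La}\*_{\hat{\La}}W}W$; your key surjectivity $P+W=\hqQ\*_{\hqR}W$ is correct (commensurability of full $\hqR$-lattices plus $\qQ+\hqR=\hqQ$), and you rightly note this is not a Milnor square, so the exactness must be checked by hand. Your approach buys a completely explicit module-level construction that avoids the unproved unit identity; its cost is the preliminary torsion reduction and two descent steps you state but do not justify: that completion induces an equivalence $\cA^t\ito\hcA^t$ (fully faithful is clear since torsion endomorphism rings are already complete; density needs that every torsion object of $\hcA$ is a summand of some $\hat{A^t}$, which follows from the Krull--Schmidt property of $\hcA$ together with Corollary~\ref{15}), and that finite generation and projectivity of $M$ descend along the faithfully flat map $\La\to\hat{\La}$ (standard, or cite \cite[3.23]{rei} as the paper does elsewhere). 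These are routine gaps, not errors.
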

  
   Recall that the category $Q\cA\xx_{Q\hcA}\hcA$ consists of triples $(V,\hat{A},\si)$, where $V\in Q\cA,\, \hat{A}\in\hcA$ and 
   $\si:\hat{V}\ito Q\hat{A}$, and a morphism $(V,\hat{A},\si)\to(V',\hat{A'},\si')$ is a pair $\be:V\to V',\,\al:\hat{A}\to \hat{A'}$ such that 
   $(Q\al)\si=\si'\hat{\be}$.
   
   \begin{proof}
    We define a functor $\sF:\cA\to Q\cA\xx_{Q\hcA}\hcA$ setting $\sF(A)=(QA,\hat{A},\si)$, where $\si:\widehat{QA}\ito Q\hat{A}$ 
    comes from the identity morphism of $A$, and $\sF(\al)=(Q\al,\hat{\al})$. Note that the diagram
    \[
     \begin{CD}
       M @>>> \hat{M} \\ @VVV @VVV \\ QM @>>> Q\hat{M}
     \end{CD}
    \]
    is cartesian for every finitely generated $\qR$-module $M$. It implies that $\sF$ is fully faithful. So it remains to show that it is dense.
    
	An object from $Q\cA$ is a pair $B_1=(A_1,e_1)$ where $A_1\in\ob\cA$ and $e_1$ is an idempotent in $\End_{Q\cA}A_1$.
    An object from $\hcA$ is a pair $B_2=(A_2,e_2)$ where $A_2\in\ob\cA$ and $e_2$ is an idempotent in $\End_{\hcA}A_2,$. 
    Setting $B=A_1\+A_2$, we can replace both $A_1$ and $A_2$ by $B$, so suppose that $B_1=(B,e_1)$ and $B_2=(B,e_2)$. 
    An isomorphism $\hat{B}_1\ito QB_2$ is then given by an automorphism $\si$ of $Q\hat{B}$ such that $\si e_1\si^{-1}=e_2$.
    Let $\La=\End_\cA B$. As $(\hat{\La})^\xx(Q\La)^\xx=(Q\hat{\La})^\xx$, there are automorphisms $\si_1$ of
    $QB$ and $\si_2$ of $\hB$ such that $\si=\si_2\si_1$, whence $\si_1e_1\si_1^{-1}=\si_2^{-1}e_2\si_2$. It implies that there is an idempotent
    $e\in\La$ such that its image in $Q\La$ is $\si_1e_1\si_1^{-1}$ and its image in $\hat{\La}$ is $\si_2^{-1}e_2\si_2$. This idempotent
    arises from a direct summand $A\Sb B$ such that $B_1\simeq QA$, $B_2\simeq \hA$ and $(B_1,B_2,\si)\simeq \sF(A)$, which
    accomplishes the proof.
    \end{proof}
    
    \begin{cor}\label{loc2} 
    \begin{enumerate}
    \item        The diagram \eqref{eloc2} is cartesian.
    \item  The group $K_0(\cA)$ is free.
    \end{enumerate}
    \end{cor}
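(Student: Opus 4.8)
The plan is to deduce both statements from Theorem~\ref{loc1}, which identifies $\cA$ with the pull-back category $Q\cA\xx_{Q\hcA}\hcA$ via the functor $\sF(A)=(QA,\hat A,\si)$. Concretely, I would show that the canonical homomorphism
\[
\Phi\colon K_0(\cA)\longrightarrow P:=K_0(Q\cA)\times_{K_0(Q\hcA)}K_0(\hcA),\qquad [A]\mapsto\bigl([QA],[\hat A]\bigr),
\]
is an isomorphism; this is precisely the assertion that \eqref{eloc2} is cartesian, i.e.\ part~(1). The map $\Phi$ is well defined because the gluing isomorphism $\si\colon\widehat{QA}\ito Q\hat A$ gives $[\widehat{QA}]=[Q\hat A]$ in $K_0(Q\hcA)$, so the square commutes. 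Part~(2) is then immediate: $P$ is a subgroup of $K_0(Q\cA)\+K_0(\hcA)$, which is free since $Q\cA$ and $\hcA$ are Krull--Schmidt categories, and a subgroup of a free abelian group is free; hence $K_0(\cA)\simeq P$ is free.

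For injectivity I would factor $\Phi$ through the second projection and check already that $K_0(\cA)\to K_0(\hcA)$ is injective. An arbitrary element is a difference $[A]-[A']$; if it lies in the kernel then $[\hat A]=[\hat{A'}]$ in $K_0(\hcA)$. As $\hcA$ is a \ksc, cancellation holds there, so $\hat A\simeq\hat{A'}$. By the local fact that $A\simeq A'$ \iff $\hat A\simeq\hat{A'}$ this gives $A\simeq A'$, and since cancellation holds in $\cA$ as well, $[A]=[A']$ in $K_0(\cA)$. Thus $\Phi$ is injective.

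The hard part will be surjectivity, and this is where Theorem~\ref{loc1} does the real work. Given $(\xi,\eta)\in P$, I would first normalise both components. Since every object of $\hcA=\add(\hqR\*_\qR\cA)$ is a direct summand of a completion $\hat A$, I can write $\eta=[W]-[\hat A_0]$ with $W\in\ob\hcA$ and $A_0\in\ob\cA$; likewise, every object of $Q\cA$ being a summand of some $QB$, I can write $\xi=[V]-[QB_0]$ with $V\in\ob Q\cA$ and $B_0\in\ob\cA$. Using $\widehat{QA_0}\simeq Q\hat A_0$ and $\widehat{QB_0}\simeq Q\hat B_0$ (commutativity of \eqref{eloc1}), the compatibility $\hat\xi=Q\eta$ becomes $[\hat V]+[Q\hat A_0]=[QW]+[Q\hat B_0]$ in $K_0(Q\hcA)$. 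The crucial step is that, $Q\hcA$ being a \ksc, cancellation upgrades this equality of \emph{classes} to a genuine isomorphism $\tau\colon\widehat{V\+QA_0}\ito Q(W\+\hat B_0)$. Setting $V'=V\+QA_0$ and $W'=W\+\hat B_0$, the triple $(V',W',\tau)$ is an object of $Q\cA\xx_{Q\hcA}\hcA$, so by the essential surjectivity of $\sF$ in Theorem~\ref{loc1} there is $A\in\ob\cA$ with $QA\simeq V'$ and $\hat A\simeq W'$. A short bookkeeping computation ($[QA]=[V]+[QA_0]$ and $[\hat A]=[W]+[\hat B_0]$) then yields $\xi=[QA]-[Q(A_0\+B_0)]$ and $\eta=[\hat A]-[\widehat{A_0\+B_0}]$, whence $\Phi\bigl([A]-[A_0\+B_0]\bigr)=(\xi,\eta)$. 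This establishes surjectivity, completing part~(1), and part~(2) follows as above.
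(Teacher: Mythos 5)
Your argument is correct and follows the same route as the paper, which simply asserts that (1) follows immediately from Theorem~\ref{loc1} and that (2) follows because $K_0(Q\cA)$ and $K_0(\hcA)$ are free; your write-up just supplies the details (injectivity via cancellation in the Krull--Schmidt category $\hcA$ together with $A\simeq A'\Leftrightarrow\hat A\simeq\hat{A'}$, surjectivity via the essential surjectivity of $\sF$) that the paper leaves implicit.
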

    \begin{proof}
     (1) follows immediately from Theorem~\ref{loc1}. As the groups $K_0(Q\cA)$ and $K_0(\hcA)$ are free, it implies (2).
    \end{proof}
    
    There is one important case, when the generators of the group $K_0(\cA)$ can be explicitly calculated. Fortunately, most 
    examples that occur in applications are of this sort. (As a rule, they even satisfy much more restrictive \emph{Max-condition},
    see Definition~\ref{max-cond}.)
    
    \begin{defin}[S-condition]\label{s-cond} 
      We say that a \lof\ $\qR$-category $\cA$ \emph{satisfies the S-condition} if for every indecomposable object 
      $U\in\ind Q\hcA$ there is an object $S\in\ob\hcA$ such that $U\simeq QS$. Obviously, then $S$ is also indecomposable.
      
        If this condition is satisfied, we fix an object $\sS(U)$ such that $Q\sS(U)\simeq U$ for every $U\in\ind Q\hcA$
    and set $\sS(V)=\bop_{i=1}^m\sS(U_i)$ if $V\simeq\bop_{i=1}^mU_i$, where $U_i\in\ind Q\hcA$.
    
      We set $\mS(\hcA)=\ssuch{\sS(U)}{U\in\ind Q\hcA}$ and $\mA(\hcA)=\ind\hcA\=\mS(\hcA)$.
    \end{defin}     
     
    In the rest of this section we suppose that $\cA$ satisfies the S-condition and we use the notations of Definition~\ref{s-cond}.  
    Fix some more notations and terms. Note that $Q\cA^0$ is a semisimple category, i.e. $Q\cA^0(W,W')=0$ if $W,W'\in\ind Q\cA$
    and $W\not\simeq W'$. It implies that $Q\cA(W,W')\in\nil Q\cA$, whence also $Q\hcA(\hat{W},\hat{W'})\in\nil Q\hcA$. Therefore,
    $\hat{W}$ and $\hat{W'}$ have no isomorphic direct summands.
    
   \begin{defin}\label{at0} 
           \def\lll{\mbox{\Large$\lceil$}}	   \def\rrr{\mbox{\Large$\rceil$}}
    Let $U\in\ind Q\hcA,\,V\in\ob\hcA,\,W\in\ind Q\cA$.
     \begin{enumerate}
     \item  The multiplicities $\mu(U,V)$ are defined from the decomposition $V\simeq\bop_{U\in\ind Q\hcA}\mu(U,V)U$.
     
     \item  We set $\mu(U)=\mu(U,\hat{W})$ if $\mu(U,\hat{W})\ne0$. Note that there is exactly one $W$ with this property.
     
     \item  If $\mu(U)\div \mu(U,V)$ for all $U\in\ind Q\cA$, we define $S_V$
     as an object from $\cA$ such that $\widehat{S_V}\simeq\sS(V)$ (it exists by Theorem~\ref{loc1}). Note that if this condition
     is not satisfied there is no object $V_a$ in $Q\cA$ such that $V\simeq\widehat{V_a}$.
      
    \item  We set $\sS(W)=S_{\widehat{W}}$, denote by $\mS(\cA)$ the set $\{\,\sS(W)\div W\in\ind Q\cA\,\}$ and call these 
    objects \emph{S-objects}. 
     
   \item Let $V\in\ob Q\hcA,\,U\in\ind Q\hcA$. We denote by $\de(U,V)$ the smallest non-negative integer such that 
   $\mu(U)\div \mu(U,V)+\de(U,V)$.
   
   \item For an object $C\in\mA(\hcA)$ we set $\tilde{C}=C\+\left(\bop_{U\in\ind \hcA}\de(U,QC)\sS(U)\right)$.
   
   \item We call an object $A\in\ob\qA$ \emph{atomic} if $\hat{A}\simeq\tilde{C}$ for some $C\in\mA(\hcA)$.  Note that such
   object $A$ exists and is unique up to isomorphism for every $C\in\mA(\hcA)$ by Theorem~\ref{loc1}. In this case we call $C$ 
   \emph{the core} of $A$ and denote it by $\co(A)$. We denote by $\mA(\cA)$ the set of isomorphism classes of atomic objects. 
     \end{enumerate}
   \end{defin}
   
   The following properties immediately follow from definitions.
   
   \begin{prop}\label{at1} 
     \begin{enumerate}
     \item   Any atomic object or S-object is indecomposable.
     
     \item  Two atomic objects are isomorphic \iff their cores are isomorphic.
     
     \item  S-objects $S$ and $S'$ are isomorphic \iff $QS\simeq QS'$.
     
     \item  Neither atomic object is isomorphic to an S-object.
     \end{enumerate}
   \end{prop}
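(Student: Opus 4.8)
The plan is to reduce each of the four assertions to unique-decomposition bookkeeping in the Krull--Schmidt category $\hcA$, using two standing facts: that $A\simeq B$ in $\cA$ \iff $\wh A\simeq\wh B$ in $\hcA$ (recorded before Theorem~\ref{loc1}), and the following \emph{descent constraint}. For $W\in\ind Q\cA$ write $\wh W\simeq\bop_U\mu(U,\wh W)U$ in $Q\hcA$; by the remark preceding Definition~\ref{at0} distinct indecomposables of $Q\cA$ complete to objects of $Q\hcA$ with disjoint families of constituents, so by Definition~\ref{at0}(2) each $U\in\ind Q\hcA$ occurs in $\wh W$ for a unique $W=W(U)$, with $\mu(U)=\mu(U,\wh{W(U)})$. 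Consequently, for any $X\in\ob Q\cA$, writing $X\simeq\bop_W n_WW$ gives $\mu(U,\wh X)=n_{W(U)}\mu(U)$; in particular, for every $A\in\ob\cA$ (so that $QA\in\ob Q\cA$ and $Q\wh A\simeq\wh{QA}$) the multiplicity $\mu(U,Q\wh A)$ is divisible by $\mu(U)$ for all $U$. The same multiplicity count shows that the completion functor $Q\cA\to Q\hcA$ reflects isomorphisms (if $\wh X\simeq\wh{X'}$ then $n_{W(U)}=n'_{W(U)}$ for all $U$, hence $n_W=n'_W$ since every $W$ is some $W(U)$), and that $Q\sS(W)\simeq W$, because $\wh{Q\sS(W)}\simeq Q\,\wh{\sS(W)}\simeq Q\sS(\wh W)\simeq\bop_U\mu(U,\wh W)U\simeq\wh W$. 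I would establish these three consequences at the outset.

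For assertion (1) the essential input is the bound $\de(U,QC)<\mu(U)$, which is immediate since $\de(U,QC)$ is by definition a least non-negative residue modulo $\mu(U)$. Let $A$ be atomic with core $C$, so $\wh A\simeq\tilde C=C\+\bop_U\de(U,QC)\sS(U)$ with $C\in\mA(\hcA)$ and every remaining summand in $\mS(\hcA)$; since $\mA(\hcA)\cap\mS(\hcA)=\0$, the $\sS(U)$-multiplicity of $\tilde C$ is exactly $\de(U,QC)$. If $A\simeq A_1\+A_2$, then $\wh A\simeq\wh A_1\+\wh A_2$, and by uniqueness of decomposition in $\hcA$ the summand $C$ lies in exactly one factor, say $\wh A_1$; hence $\wh A_2\simeq\bop_U m_U\sS(U)$ with $0\le m_U\le\de(U,QC)$. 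But the descent constraint applied to $A_2\in\cA$ gives $\mu(U)\div\mu(U,Q\wh A_2)=m_U$, and together with $m_U<\mu(U)$ this forces $m_U=0$; thus $\wh A_2\simeq0$ and $A_2\simeq0$, so $A$ is indecomposable. For an S-object $S=\sS(W)$ one has $\wh S\simeq\sS(\wh W)\simeq\bop_{U:W(U)=W}\mu(U)\sS(U)$; the same splitting gives a summand $\bop_U m_U\sS(U)$ with $0\le m_U\le\mu(U)$, nonzero only for $U$ with $W(U)=W$, and the descent constraint yields a single integer $n_W$ with $m_U=n_W\mu(U)$, whence $n_W\in\{0,1\}$ and the summand is $0$ or all of $S$. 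I expect this residue/divisibility step to be the one genuine obstacle: it is exactly where the choice $\de(U,QC)<\mu(U)$ enforces the ``too few copies to descend'' phenomenon, and it must be written so that this is transparent.

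The remaining assertions are then formal. For (2), if $A,B$ are atomic with cores $C,C'$, then $A\simeq B$ \iff $\tilde C\simeq\tilde{C'}$; as $\tilde C$ and $\tilde{C'}$ each have a single constituent in $\mA(\hcA)$, namely $C$ and $C'$, unique decomposition in $\hcA$ yields $C\simeq C'$, while conversely $C\simeq C'$ gives $QC\simeq QC'$, hence $\de(U,QC)=\de(U,QC')$ for all $U$ and $\tilde C\simeq\tilde{C'}$. For (3) the forward direction is trivial, and the converse follows from $Q\sS(W)\simeq W$ and the reflection of isomorphisms: writing $S=\sS(W)$ and $S'=\sS(W')$, the relation $QS\simeq QS'$ gives $W\simeq W'$, hence $\wh S\simeq\sS(\wh W)\simeq\sS(\wh{W'})\simeq\wh{S'}$ and $S\simeq S'$. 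Finally (4) is immediate: $\wh A\simeq\tilde C$ has the constituent $C\in\mA(\hcA)$, whereas $\wh S$ is a direct sum of objects of $\mS(\hcA)$ only, so $\wh A\not\simeq\wh S$ by unique decomposition in $\hcA$, and therefore $A\not\simeq S$.
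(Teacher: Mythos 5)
Your proof is correct, and it is essentially the argument the paper has in mind: the paper offers no proof at all, stating only that the properties ``immediately follow from definitions,'' and what makes that true is exactly the bookkeeping you spell out --- Krull--Schmidt uniqueness in $\hcA$, the fact that $A\simeq B$ \iff $\hat A\simeq\hat B$, and the divisibility constraint $\mu(U)\div\mu(U,Q\hat A)$ for $A\in\ob\cA$ combined with $\de(U,QC)<\mu(U)$, which is the one genuinely non-formal step (needed for indecomposability in (1)) and which you identify and justify correctly.
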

   
   \begin{thm}\label{at2} 
     Let $\cA$ satisfies S-condition.
     \begin{enumerate}
     \item  For every indecomposable object $B\notin\mS(\cA)$ there are atomic objects $\,A_1,A_2,\dots,A_m\,$ and S-objects 
     $\,\lst Sn\,$ such that \ 
     $\bop_{i=1}^mA_i\simeq B\+\big(\bop_{j=1}^nS_j\big)$. These objects are uniquely defined, up to isomorphism and permutation.
     
     \item  $\setsuch{[A]}{ A\in\mA(\cA)\cup\mS(\cA)}$ is a basis of $K_0(\cA)$.
     \end{enumerate}
   \end{thm}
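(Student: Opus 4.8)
The plan is to exploit the cartesian square of Corollary~\ref{loc2}, which realizes $K_0(\cA)$ as the fibre product $K_0(Q\cA)\xx_{K_0(Q\hcA)}K_0(\hcA)$, and to read both statements off the explicit bases of the three corner groups. For $U\in\ind Q\hcA$ write $W(U)$ for the unique $W\in\ind Q\cA$ with $U\Sb\hat W$ (it exists because $U$ is a summand of some $\widehat{QA}=\bop_W\mu(W,QA)\hat W$, and is unique by the orthogonality noted before Definition~\ref{at0}); thus $\mu(U)=\mu(U,\widehat{W(U)})$ and $\widehat W=\bop_{W(U)=W}\mu(U)\,U$. Writing a class in $K_0(Q\cA)$ as $\sum_W a_W[W]$ and one in $K_0(\hcA)$ as $\sum_{C\in\mA(\hcA)}b_C[C]+\sum_U c_U[\sS(U)]$, their images in $K_0(Q\hcA)$ agree \iff $c_U=a_{W(U)}\mu(U)-\sum_C b_C\mu(U,QC)$ for every $U$. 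Hence $(a_W)_W$ and $(b_C)_C$ are free parameters determining the $c_U$, so $K_0(\cA)$ is free on the index set $\ind Q\cA\sqcup\mA(\hcA)$.

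To prove (2) I compute the proposed generators in these coordinates. Since $Q\sS(W)\simeq W$ and $\widehat{\sS(W)}\simeq\sS(\hat W)=\bop_{W(U)=W}\mu(U)\sS(U)$, the class $[\sS(W)]$ has $a=e_W$ and all $b_C=0$. For an atomic object $A_C$ with core $C$ one has $\hat A_C\simeq\tilde C$, so $b_C=1$ (others $0$) and, from $\widehat{Q A_C}=Q\tilde C=\bop_U(\mu(U,QC)+\de(U,QC))U$ together with $\mu(U,QC)+\de(U,QC)=\mu(U)\lceil\mu(U,QC)/\mu(U)\rceil$, its $a$-coordinate is $a_{W(U)}=\lceil\mu(U,QC)/\mu(U)\rceil=:(v_C)_{W(U)}$ (well defined on the fibres of $W(\_)$ precisely because $A_C$ exists, Definition~\ref{at0}). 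Thus in the free coordinates $(a_W),(b_C)$ the family $\{[\sS(W)]\}\cup\{[A_C]\}$ is $[\sS(W)]=(e_W,0)$, $[A_C]=(v_C,e_C)$, whose transition matrix is block-unipotent upper triangular, hence invertible over $\mZ$; as $W\mapsto\sS(W)$ and $C\mapsto A_C$ biject onto $\mS(\cA)$ and $\mA(\cA)$ by Proposition~\ref{at1}(2,3), this proves (2).

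For (1), expand $[B]=\sum_C s_C[A_C]+\sum_W\be_W[\sS(W)]$ in the basis of (2). Matching $b$-coordinates, $s_C$ is the multiplicity of the core $C$ among the summands of $\hat B$ lying in $\mA(\hcA)$, so $s_C\ge0$; matching $a$-coordinates gives $\be_W=-n_W$, where, writing $r_W$ for the multiplicity of $W$ in $QB$ and $C_1,\dots,C_m$ for the summands of $\hat B$ in $\mA(\hcA)$,
\[
 n_W=\sum_{i=1}^m\Big\lceil\mu(U,QC_i)/\mu(U)\Big\rceil-r_W \qquad(\text{any }U\text{ with }W(U)=W).
\]
Granting $n_W\ge0$, the relation $[B]+\sum_W n_W[\sS(W)]=\sum_C s_C[A_C]$ together with cancellation in $\cA$ (valid since it holds in the \ksc\ $\hcA$) yields $B\+\big(\bop_W n_W\sS(W)\big)\simeq\bop_{i=1}^m A_{C_i}$, the asserted decomposition; its uniqueness follows from uniqueness of the cores $C_i$ in $\hcA$ and freeness of $K_0(\cA)$.

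The crux, which I expect to be the hardest step, is the inequality $n_W\ge0$, and this is exactly where indecomposability of $B$ and $B\notin\mS(\cA)$ are used. Suppose $n_{W_0}<0$. Then $r_{W_0}\ge1$, and for every $U$ with $W(U)=W_0$ the multiplicity $c_U=r_{W_0}\mu(U)-\sum_i\mu(U,QC_i)$ of $\sS(U)$ in $\hat B$ satisfies $c_U\ge\mu(U)$; hence $W_0\Sb QB$ and $\widehat{\sS(W_0)}=\bop_{W(U)=W_0}\mu(U)\sS(U)\Sb\hat B$. Put $\La=\End_\cA B$, so $\La\simeq Q\La\xx_{Q\hat\La}\hat\La$ by Theorem~\ref{loc1}. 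The two idempotents of $Q\hat\La$ cutting out these summands have isomorphic images (both $\simeq\bop_{W(U)=W_0}\mu(U)U$) and isomorphic complements, so they are conjugate in the \ksc\ $Q\hcA$; using the factorization $(Q\hat\La)^\xx=\hat\La^\xx\,(Q\La)^\xx$ from the proof of Theorem~\ref{loc1}, one adjusts the conjugating unit by automorphisms of $\hat B$ and of $QB$ so that a single idempotent of $Q\hat\La$ is simultaneously the image of an idempotent of $Q\La$ (with image $W_0$) and of an idempotent of $\hat\La$ (with image $\widehat{\sS(W_0)}$). By the pullback description of $\La$ this pair is an idempotent of $\End_\cA B$, splitting off $A'\Sb B$ with $QA'\simeq W_0$ and $\hat A'\simeq\sS(\hat W_0)$; by Proposition~\ref{at1}(3) such $A'$ is $\sS(W_0)$. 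Since $B$ is indecomposable this forces $B\simeq\sS(W_0)\in\mS(\cA)$, a contradiction. Therefore $n_W\ge0$ for all $W$, completing the proof.
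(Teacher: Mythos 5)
Your proof is correct, but it runs in the opposite direction from the paper's and uses a different mechanism at the key step. The paper proves (1) first and directly: it writes $\hat B\simeq C\+B'$ with $C$ the $\mA(\hcA)$-part, uses indecomposability of $B$ to pin the S-part down exactly ($\mu(U,QB')=\de(U,QC)$, by splitting a complementary sum of S-objects off $B$ through Theorem~\ref{loc1}), and then produces the object $S$ with $B\+S\simeq\bop_i\tilde C_i$ by matching the rational and completed pictures; part (2) is then read off by taking cores and cancelling. You prove (2) first, by computing the fibre product $K_0(Q\cA)\xx_{K_0(Q\hcA)}K_0(\hcA)$ of Corollary~\ref{loc2} in explicit coordinates and exhibiting $\set{[\sS(W)]}\cup\set{[A_C]}$ as a unipotent change of basis; part (1) then reduces to the positivity of the coefficients $n_W$, which you prove by contradiction with an idempotent-matching argument that correctly reruns the density step from the proof of Theorem~\ref{loc1} to split $\sS(W_0)$ off the indecomposable $B$. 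The toolkits coincide (Theorem~\ref{loc1}, the Krull--Schmidt property of $\hcA$ and $Q\cA$, cancellation in $\cA$), and both arguments lean equally on the existence claims built into Definition~\ref{at0}(3),(7); what your route buys is a transparent description of $K_0(\cA)$ as a free group with an explicit transition matrix, at the cost of the extra positivity lemma --- note that your inequality $r_W\le\sum_i\lceil\mu(U,QC_i)/\mu(U)\rceil$ is weaker than the paper's equality $\mu(U,QB')=\de(U,QC)$ (sum of ceilings versus ceiling of the sum), but it is exactly what your decomposition requires. One small repair: the final identification $A'\simeq\sS(W_0)$ follows because completion reflects isomorphisms (equivalently, from the full faithfulness in Theorem~\ref{loc1}), not from Proposition~\ref{at1}(3), which speaks only of two S-objects.
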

   
   Note that the subgroup generated by $\mA(\cA)$ is isomorphic to the subgroup of $K_0(\hcA)$ with the basis 
   $\mA(\hcA)$ and  the subgroup generated by $\mS(\cA)$ is isomorphic to $K_0(Q\cA)$.     
  
  \begin{proof}
  (1)  Let $\hat{B}\simeq C\+B'$, where $C=\left(\bop_{i=1}^mC_i\right)$ with $C_i\in\mA(\hcA)$ and $B'$ has no direct summands 
  from $\mA(\hcA)$.
  Then $B'\simeq\bop_{U\in\ind\hcA}\mu(U,QB')\sS(U)$ and
  \[\textstyle
   \mu(U,Q\hat{B})=\sum_{i=1}^m\left(\mu(U,QC_i)+\de(U,QC_i)\right) - \left(\sum_{i=1}^m\de(U,QC_i)-\mu(U,QB')\right).
  \] 
  Moreover, $\mu(U,QB')=\de(U,QC)$. Indeed, $\mu(U,QB')\ge\de(U,QC)$, since $\mu(U)\div \mu(U,QC)+\mu(U,QB')$, and
  \begin{align*}
    &\textstyle  Q\hat{B}\simeq\big(QC\+(\bop_U\de(U,QC)U)\big)\+\big(\bop_U(\mu(U,QB')-\de(U,QC))U\big), \\
    \intertext{whence, by Theorem~\ref{loc1}, $B\simeq B_1\+ B_2$, where}
    &\textstyle\hat{B}_1\simeq C\+(\bop_U\de(U,QC)\sS(U)) \\
    \intertext{and}
    &\textstyle \hat{B}_2\simeq \bop_U(\mu(U,QB')-\de(U,QC))\sS(U).
  \end{align*} 
  As $B$ is indecomposable, $B_2=0$, so $\mu(U,qB')=\de(U,QC)$. Let $A_i=\tilde{C}_i$, $A=\bop_{i=1}^mA_i$. 
  Then $Q\hat{A}\simeq QC\+(\bop_U\de(U,QC_i)U)$. 
  Obviously, $\de(U,QC)\le\sum_{i=1}^m\de(U,QC_i)$ and $\mu(U)\div \sum_{i=1}^m\de(U,QC_i)-\mu(U,QB')$. Therefore, there
  is an object $V\in\ob Q\cA$ such that $\hat{V}\simeq\bop_U(\sum_{i=1}^m\de(U,QC_i)-\mu(U,QB'))U$. Let $S=S_V$. Then
  $Q(B\+S)\simeq QA$ and $\hat{B}\+\hat{S}\simeq\hat{A}$. By Theorem~\ref{loc1}, $B\+S\simeq A$. If 
  $V\simeq\bop_{j=1}^nW_j$ with $W_j\in\ind Q\cA$, then $S\simeq\bop_{j=1}^nS_j$, where $S_j=\sS(W_j)$ are S-objects.
  Note that the cores $\co(A_i)$ (hence $A_i$) are uniquely determined: they are direct summands of $\hat{B}$ that are not S-objects. 
  As $Q\cA$ is a \ksc, the objects $W_j$ are uniquely determined. Hence S-objects $S_j$ are also uniquely determined.
  
  \smallskip
  (2) By (1), $\ssuch{[A]}{\mA(\cA)\cup\mS(\cA)}$ is a set of generators of $K_0(\cA)$. So we only have to show that if
  $(\bop_{i=1}^mA_i)\+(\bop_{j=1}^nS_j)\simeq(\bop_{i=1}^kA'_i)\+(\bop_{j=1}^lS'_j)$, where $A_i,A'_i$ are atomic and $S_j,S'_j$
  are S-objects, then $m=k,n=l$ and the objects $A_i$ and $A'_i$, as well as the objects $S_j$ and $S'_j$, are isomorphic up to a
  permutation. Taking cores, we obtain that $\bop_{i=1}^m\co(A_i)\simeq\bop_{j=1}^k\co(A'_j)$, whence $m=k$ and 
  $\co(A_i)\simeq \co(A'_{\si i})$,  hence $A_i\simeq A'_{\si i}$ for some permutation $\si$. As the cancellation holds true in the
  category $\cA$, also $\bop_{j=1}^nS_j\simeq\bop_{j=1}^lS'_j$, whence $\bop_{j=1}^nQS_j\simeq\bop_{j=1}^lQS'_j$. 
  As $QS_j, QS'_j$ are indecomposable and $Q\cA$ is a \ksc, $n=l$ and $QS_j\simeq QS'_{\tau j}$, hence $S_j\simeq S'_{\tau j}$
   for some permutation $\tau$.  
  \end{proof}
  
  \begin{cor}\label{at2c} 
    Suppose that $\cA$ satisfies the S-condition and $\hat{V}$ is indecomposable for every $V\in\ind Q\cA$. Then $K_0(\cA)$
    is a \ksc.
  \end{cor}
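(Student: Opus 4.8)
The plan is to pin down the indecomposable objects of $\cA$ explicitly and then read off the Krull--Schmidt property from Theorem~\ref{at2}. First I would unwind the extra hypothesis. Recall from Definition~\ref{at0} and the discussion preceding it that every $U\in\ind Q\hcA$ is a direct summand of exactly one $\hat{W}$ with $W\in\ind Q\cA$, and that $\mu(U)=\mu(U,\hat{W})$ for that $W$. If $\hat{W}$ is indecomposable for every $W\in\ind Q\cA$, then this summand relation forces $\hat{W}\simeq U$ with multiplicity one; hence $\mu(U)=1$ for all $U\in\ind Q\hcA$, and $W\mapsto\hat{W}$ is a bijection $\ind Q\cA\to\ind Q\hcA$.

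Since $\mu(U)=1$ divides every integer, Definition~\ref{at0}(5) gives $\de(U,V)=0$ for all $U$ and $V$. Consequently $\tilde{C}=C$ for every $C\in\mA(\hcA)$ by Definition~\ref{at0}(6), so an object $A$ is atomic exactly when $\hat{A}\simeq C$ for some $C\in\mA(\hcA)$. Next I would rerun the construction in the proof of Theorem~\ref{at2}(1): for an indecomposable $B\notin\mS(\cA)$ one has $\hat{B}\simeq C\+B'$ with $C=\bop_{i=1}^mC_i$, $C_i\in\mA(\hcA)$, and $B'$ a sum of S-objects satisfying $\mu(U,QB')=\de(U,QC)=0$. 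Thus $B'=0$, and the auxiliary object $S$ produced there is zero as well, so $B\simeq\bop_{i=1}^mA_i$ with each $A_i$ atomic and nonzero. Because $B$ is indecomposable, $m=1$ and $B$ is itself atomic. Together with Proposition~\ref{at1}(1),(4), this shows that $\ind\cA=\mA(\cA)\cup\mS(\cA)$ as a disjoint union.

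Finally I would conclude the Krull--Schmidt property. Since $\qR$ is noetherian, $\cA$ is a category with decomposition, so only uniqueness of decompositions needs proof. By Theorem~\ref{at2}(2) the set $\setsuch{[X]}{X\in\ind\cA}$ is a basis of the free group $K_0(\cA)$; in particular distinct isomorphism classes of indecomposables have distinct, linearly independent classes, and, using that cancellation holds in $\cA$, one gets $X\simeq Y$ whenever $[X]=[Y]$ for indecomposable $X,Y$. Given two decompositions $\bop_i X_i\simeq\bop_j Y_j$ into indecomposables, passing to $K_0(\cA)$ and comparing coefficients against this basis matches the multisets of classes, hence the isomorphism classes of the summands up to permutation. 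Therefore $\cA$ is a \ksc. The main obstacle is the first step: correctly reading the hypothesis as $\mu(U)=1$ for all $U$ and checking that the collapse $\de\equiv 0$ makes the S-object contribution in Theorem~\ref{at2}(1) vanish; once that is in place, the remainder is bookkeeping.
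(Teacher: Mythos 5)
Your proof is correct and follows the paper's own (very terse) argument: the hypothesis forces $\mu(U)=1$, hence $\de\equiv 0$, so every indecomposable object of $\cA$ is atomic or an S-object, and Theorem~\ref{at2} then yields uniqueness of decompositions. You have simply supplied the bookkeeping that the paper leaves implicit.
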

  
  Note that this condition is not necessary for $\cA$ to be a \ksc.

  \begin{proof}
   It follows from Theorem~\ref{at2}, since in this case any indecomposable object from $\cA$ is either atomic or 
   an S-object.
  \end{proof}
  
  \begin{exam}\label{nks} 
  \def\hqS{\hat{\qS}}
    We present here an example when $\cA$ is not a \ksc.\!%
    \footnote{\, Perhaps, it is the simplest example. For other examples see \cite[\S\,35]{cr1} and \cite{gud}, where the case of group rings 
    $\mZ_{(p)}G$ is studied.}
    Let $\qK$ be an extension of the field $\qQ$ such that $\dim_\qQ\qK=3$
    and, if $\qS$ is the integral closure of $\qR$ in $\qK$, $\gP\qS$ is a product of $3$ different prime ideals of $\qS$: 
    $\gP\qS=\gP_1\gP_2\gP_3$. For instance, 
    $\qR=\mZ_7=\ssuch{a\in\mQ}{a=m/n, \text{ where } m,n\in\mZ \text{ and } 7\nmid n}$ and $\qK=\mQ(\sqrt[3]{6})$. 
    Then $\qS/\gP_i\simeq\qR/\gP$. Fix isomorphisms $\vi_i:\qS/\gP_i\ito\qR/\gP$ and set 
    $\La=\ssuch{a\in\qS}{\vi_1(a)=\vi_2(a)=\vi_3(a)}$, $N_{ij}=\ssuch{a\in\qS}{\vi_i(a)=\vi_j(a)}$. Let $\cA=\La\lat$. 
    The ring $\hLa$ is local, $\hqS=\qS_1\xx\qS_2\xx\qS_3$, where $\qS_i$ is the $\gP_i$-adic completion of $\qS$, and
    $\hat{N}_{ij}\simeq \hLa_{ij}\+\qS_k$, where $\hLa_{ij}$ is the projection of $\hLa$ onto $\qS_i\xx\qS_j$ and $k\notin\{i,j\}$.
    We set $\mS(\hcA)=\set{\qS_1,\qS_2,\qS_3}$ (actually, this choice is unique).
    The $\hLa$-lattices $\hLa_{ij}$ are indecomposable. Hence the $\La$-lattices $N_{ij}$ are atomic. According to 
    Theorem~\ref{loc1}, there is a $\La$-lattice $M$ such that $\hat{M}\simeq\hLa_{12}\+\hLa_{13}\+\hLa_{23}$ and 
    it is indecomposable. Then $M\+\qS\simeq N_{12}\+N_{13}\+N_{23}$. (It is the decomposition from Theorem~\ref{at2}\,(1).)
    Thus we have decompositions of the same module into direct sums both of $2$ and of $3$ indecomposables and all of them
    are non-isomorphic.
    
    In this example $\hLa$ is a \emph{triad of \dvr{s}} \cite[p.23]{ba2}. So it follows from the calculations there   
    that $\mA(\cA)=\set{N_{12},N_{13},N_{23},\La,\La^*}$, where $\La^*=\ssuch{a\in\qS}{\vi_1(a)=\vi_2(a)+\vi_3(a)}$. 
    Therefore, $K_0(\La\lat)=\mZ^6$.
  \end{exam}
      
   \section{$K_0$, global case}
   \label{kg} 
   
    In this section we suppose that $\qR$ is any Dedekind domain and the \lof\ $\qR$-category $\cA$ satisfies the following 
    condition.
  
     \begin{defin}[Max-condition]\label{max-cond} 
       We say that a \lof\ $\qR$-category $\cA$ \emph{satisfies the Max-condition} if for every indecomposable object 
      $W\in\ob Q\cA$ there is an object $S\in\ob\cA$ such that $W\simeq QS$ and $\De(U)=\End_{\cA^0}S^0$ is a maximal 
      order in the skewfield $Q\cA^0(W^0,W^0)$.
      
       If this condition is satisfied, we fix, for every $W\in\ind Q\cA$, an object $\sS(W)$ such that $\sS(W)\simeq W$
       and $\End_{\cA^0}\sS(W)^0$ is a maximal order,
      and set $\sS(V)=\bop_{i=1}^m\sS(W_i)$ if $V\simeq\bop_{i=1}^mW_i$, where $W_i\in\ind Q\cA$.   
      We set $\mS(\cA)=\ssuch{\sS(W)}{W\in\ind Q\cA}$. We denote by $\sS_\gP(W)$ the image of $\sS(W)$ in $\cA_\gP$
      and $\mS(\cA_\gP)=\ssuch{\sS_\gP(W)}{W\in\ind Q\cA}$. Since $\End_{\cA_\gP^0}S^0_\gP=(\End_{\cA^0} S^0)_\gP$ is a maximal 
      $\qR_\gP$-order if $S=\sS(W)$, $\cA_\gP$ satisfies the Max-condition as well. 
     \end{defin}
     
     \begin{rmk}\label{max-rem} 
      If $\End_{\cA^0}S^0$ is a maximal order and $S=\bop_{i=1}^kS_i$, where $S_i$ are indecomposable, then $QS^0_i$ are simple
      $Q\cA^0$-objects and all $\End_{\cA^0}S^0_i$ are also maximal. It follows from \cite[21.2]{rei}. Therefore,
      the Max-condition is satisfied if for every $A\in\ob\cA^f$ there is an object $S$ such that $QS\simeq QA$ and 
      $\End_{\cA^0}S^0$ is a maximal order.
     \end{rmk}
     
     For instance, Max-condition is satisfied if $\qR$ is excellent and $\cA=\La\mdd$, where $\La$ is a finite $\qR$-algebra. It also
     is satisfied if $\cA=\sw$, the stable homotopy category of polyhedra. In this case all indecomposable objects in $Q\sw$ are $Q S^n$,
     where $S^n$ is an $n$-dimensional sphere, and $\End_\sw S^n=\mZ$. 
        
     \begin{prop}\label{max-s} 
     Suppose that $\cA$ satisfies the Max-condition, $\gP\in\max\qR$.
     \begin{enumerate}
     \item  $\nil\hcA=\hqR_\gP\*_\qR\nil\cA$ and $\,\nil Q\hcA=\hqQ_\gP\*_\qR\nil\cA$. Therefore,
     			$(\hcA_\gP)^0=(\wh{\cA^0})_\gP$ and $(Q\hcA_\gP)^0=Q(\wh{\cA^0})_\gP$.     			   
     \item  The category $\hcA_\gP$ satisfies the Max-condition and the category $\cA_\gP$ satisfies the S-condition.
     \item   For every object $B\in\cA_\gP$ there is an object $A\in\cA$ such that $A_\gP\simeq B$.
     \end{enumerate}
     \end{prop}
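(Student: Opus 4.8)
The plan is to push everything down to the endomorphism orders $\End_{\cA^0}A^0$ and their completions, so that the Max-condition enters only through the good behaviour of maximal orders under completion. For (1), I would first settle the inclusion $\hqR_\gP\*_\qR\nil\cA\sbe\nil\hcA_\gP$. Put $C=A\+B$ and $\La=\End_\cA C$; since $\cA$ is \lof\ over the noetherian ring $\qR$, $\La$ is noetherian and its prime radical $\nil\La$ is nilpotent by \cite[10.30]{lam}, say $(\nil\La)^k=0$. As $\hqR_\gP$ is flat over $\qR$, the ideal $\hqR_\gP\*_\qR\nil\La$ of $\End_{\hcA_\gP}\hat C_\gP=\hqR_\gP\*_\qR\La$ has vanishing $k$-th power, hence lies in the prime radical; restricting to the $(A,B)$-component gives $\hqR_\gP\*_\qR\nil(A,B)\sbe\nil(\hA_\gP,\hB_\gP)$. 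For the reverse inclusion it suffices to prove that $\wh\Ga_\gP=\hqR_\gP\*_\qR\Ga$ is semiprime, where $\Ga=\La/\nil\La=\End_{\cA^0}C^0$: then $\hqR_\gP\*_\qR\nil\La$ is an ideal of $\hqR_\gP\*_\qR\La$ whose quotient $\wh\Ga_\gP$ is semiprime, so it contains the prime radical and the two coincide. Flatness and right exactness then identify $(\hcA_\gP)^0(\hA_\gP,\hB_\gP)=\bigl(\hqR_\gP\*_\qR\cA(A,B)\bigr)/\bigl(\hqR_\gP\*_\qR\nil(A,B)\bigr)$ with $\hqR_\gP\*_\qR\cA^0(A,B)=(\wh{\cA^0})_\gP(A,B)$, giving both the nilradical formula and $(\hcA_\gP)^0=(\wh{\cA^0})_\gP$; the $\qQ$-versions follow verbatim with $\hqQ_\gP$ in place of $\hqR_\gP$.

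The crux, which I expect to be the main obstacle, is the semiprimeness of $\wh\Ga_\gP$. Here I would use the elementary fact that an $\qR$-order in a \emph{semisimple} algebra is automatically semiprime: a nonzero nilpotent ideal $I$ of an order $\Delta$ in the semisimple algebra $\hat A=\hqQ_\gP\Delta$ would generate a nonzero nilpotent two-sided ideal $\hqQ_\gP I$ of $\hat A$, which is absurd. Since $\wh\Ga_\gP$ is an order in $\hqQ_\gP\*_\qQ Q\Ga$, this reduces the claim to the semisimplicity of $\hqQ_\gP\*_\qQ Q\Ga$. Now $Q\Ga=\End_{Q\cA^0}QC^0$ is a product of matrix algebras over the skewfields $\qD_i=Q\cA^0(W_i^0,W_i^0)$ with $W_i\in\ind Q\cA$, so it is enough to see that each $\hqQ_\gP\*_\qQ\qD_i$ is semisimple. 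This is exactly what the Max-condition buys: it provides a maximal order $\Delta_i=\End_{\cA^0}\sS(W_i)^0$ in $\qD_i$, module-finite over $\qR$ because $\cA$ is \lof, and by the theory of maximal orders \cite[\S\,11]{rei} its completion $\wh{\Delta_i}_\gP$ is again a maximal order in $\hqQ_\gP\*_\qQ\qD_i$, forcing that algebra to be semisimple. This is precisely the role played by excellence in the alternative hypothesis.

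For (2), I would first verify the Max-condition for $\hcA_\gP$ and then read off the S-condition for $\cA_\gP$. Let $W\in\ind Q\hcA_\gP$. Since $Q\hcA_\gP=\add(\hqQ_\gP\*_\qQ Q\cA)$, there is $W'\in\ind Q\cA$ with $W\Sb\hqQ_\gP\*_\qQ W'$. By (1) the endomorphism order of $\wh{\sS(W')}_\gP$ in $(\hcA_\gP)^0$ is the completion $\wh{\Delta'}_\gP$ of the maximal order $\Delta'=\End_{\cA^0}\sS(W')^0$, hence is maximal by the step above. As $\hqR_\gP$ is complete local, $\hcA_\gP$ is a \ksc, so $\wh{\sS(W')}_\gP\simeq\bop_jT_j$ with $T_j$ indecomposable; Remark~\ref{max-rem}, applied inside $\hcA_\gP$, shows that each endomorphism order $\End_{(\hcA_\gP)^0}T_j^0$ is maximal. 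Since $\bop_jQT_j\simeq\hqQ_\gP\*_\qQ W'$ has $W$ as a summand, $W\simeq QT_j$ for some $j$, and $S=T_j$ witnesses the Max-condition for $\hcA_\gP$. The S-condition for $\cA_\gP$ is then immediate, since it only asks, for each $U\in\ind Q\hcA_\gP$, for an object of $\hcA_\gP$ whose rationalization is $U$ — a weakening of what we have just proved.

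For (3), I would use Corollary~\ref{15}\,(2) to write the given $B\in\cA_\gP$ as $B^t\+B^f$, lift the torsion summand $B^t$, which is $\gP$-coprimary and hence comes from a $\gP$-coprimary object of $\cA$, and concentrate on the torsion reduced summand $B^f$. Because $Q\cA_\gP=Q\cA$, the object $QB^f$ lies in $Q\cA$, so the Max-condition furnishes $\sS(QB^f)\in\cA$ with $Q\sS(QB^f)\simeq QB^f$. Applying Theorem~\ref{lg}\,(2) to the single prime $\gP$, the local object $B^f$ and the global object $\sS(QB^f)$, whose rationalizations agree, produces a torsion reduced $A\in\cA$ with $A_\gP\simeq B^f$. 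Combining with the lift of $B^t$ gives an object of $\cA$ localizing to $B$.
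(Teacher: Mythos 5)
Your overall architecture (reduce everything to endomorphism orders, reduce (1) to the semisimplicity of $\hqQ_\gP\*_\qQ\qD_i$, deduce (2) from (1) via Krull--Schmidt and Remark~\ref{max-rem}) is reasonable, and your treatment of (3) is actually a nice shortcut: invoking Theorem~\ref{lg}\,(2) with the single prime $\gP$ and the comparison object $\sS(QB^f)$ supplied by the Max-condition replaces the paper's hands-on construction with $N=L\cap M$, and it works. However, there is a genuine gap at the one point where all the real work of the proposition is concentrated: the semisimplicity of $\hqQ_\gP\*_\qQ\qD_i$. You justify it by saying that the completion $\wh{\De_i}_\gP$ of the maximal order $\De_i$ ``is again a maximal order in $\hqQ_\gP\*_\qQ\qD_i$, forcing that algebra to be semisimple.'' This is circular: the theorems in \cite{rei} asserting that completions of maximal orders are maximal are proved under the standing hypothesis that the ambient algebra is \emph{separable} over $\qQ$ (equivalently, that all its completions are already known to be semisimple), which is precisely the hypothesis you do not have and the conclusion you are trying to reach. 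The mere existence of a maximal order does not formally ``force'' the completed algebra to be semisimple; if $\hqQ_\gP\*_\qQ\qD_i$ had a nilradical, the notion of a maximal order in it would simply degenerate, and no contradiction is produced by your argument. Since part (2) begins with ``hence is maximal by the step above,'' it inherits the same gap.

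The missing idea — and the actual content of the paper's proof of (1) — is to change the base ring to the \emph{center} $\qC$ of the maximal order $\De$. Because $\De$ is maximal, $\qC$ is integrally closed, and because $\cA$ is locally finite, $\qC$ is module-finite over $\qR$; hence $\qC$ is a Dedekind domain and $\hqR_\gP\*_\qR\qC$ splits as a product of complete discrete valuation rings $\hat\qC_{\gP_i}$, so $\hqQ_\gP\*_\qQ\qK\simeq\prod_i\hat\qK_{\gP_i}$ is a product of fields. Since $\qD$ is central simple (hence separable) over $\qK=Q\qC$, each $\hat\qK_{\gP_i}\*_\qK\qD$ is central simple, so $\hqQ_\gP\*_\qQ\qD$ is semisimple — and only now does the completion theory of maximal orders (over $\qC$, where separability holds) become available, which is what the paper uses in part (2) via \cite[18.8]{rei}. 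Your sentence ``this is precisely the role played by excellence in the alternative hypothesis'' shows you sensed the difficulty, but sensing it is not the same as resolving it; without the passage through the integrally closed, module-finite center, step (1) is unproven.
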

     \begin{proof}
      (1) If $\De$ is a maximal order, its center $\qC$ is integrally closed, hence is a Dedekind domain. Let $\qK$ be its field
      of fractions, $\qD=Q\De$ and $\lst\gP k$ be all primes of $\qC$ containing $\gP$. Then $\hDe_\gP\simeq\prod_{i=1}^k\hDe_{\gP_i}$ 
      and $\hat{\qK}_\gP\simeq\prod_{i=1}^k\hat{\qK}_{\gP_i}$ \cite[24.C]{ma}. Since $\qD$ is central over $\qK$, all $\hat{\qD}_{\gP_i}$
      are central simple algebras, so $\hat{\qD}_\gP$ is semisimple. It implies that $\hqQ_\gP\*_\qR\cA^0$ is semisimple, whence
      $\nil Q\hcA=\hqQ_\gP\*_\qR\nil\cA$ and $\nil \hcA=\hqR_\gP\*_\qR\nil\cA$.
     
      (2) If $\De$ is a maximal order in a skewfield $\qD$ and $\hat{\qD}_\gP=\prod_{i=1}^m\Mat(n_i,\qD_i)$, where $\qD_i$ are skewfields,
      $\hDe_\gP$ also splits as $\hDe_\gP\simeq\prod_{i=1}^m\Mat(n_i,\De_i)$, where $\De_i$ is a maximal order in $\qD_i$ \cite[18.8]{rei}. 
      Then $\hat{\qD}_\gP\simeq\bop_{i=1}^m n_i\qD_i^{n_i}$ as $\hat{\qD}_\gP$-module, where all summands are simple modules with the 
      endomorphism rings $\qD_i$. Respectively, $\hat{\De}\simeq\bop_{i=1}^mn_i\De_i^{n_i}$, where the summands have 
      endomorphism rings $\De_i$. Suppose that $\qD=\End_{Q\cA^0}W^0$ and $\De=\End_{\cA^0}S^0$, where $S=\sS(W)$. 
      By Lemma~\ref{11} and Proposition~\ref{12}, $\hat{W}$ and $\hat{S}$ split in the same way, namely, every indecomposable 
      summand of $\hat{W}_\gP$ is of the form $QA$, where $A$ is an indecomposable summand of $\hat{S}_\gP$, and the 
      endomorphism rings of $QA^0$ and $A^0$ are, respectively, $\qD_i$ and $\De_i$ for some $i$. 
	  
	  (3) Obviously, we can suppose $\cA$ semiprime and torsion free.
	  The object $B$ arises from an idempotent $e\in\End_{\cA_\gP}B'_\gP\sbe\End_{Q\cA}QB'$, where $B'\in\ob\cA$. Let 
	  $S=\sS(QB)$,\,$C=S\+B'$,\,$\La=\End_\cA(C)$,\,$M=\cA(C,S)$ and $L=\cA_\gP(C_\gP,B)$. Then $QM\simeq QL$. 
	  Thus we can suppose that $L\sbe M_\gP$ and consider the $\La$-lattice $N=L\cap M$. Note that $N_\gP=L$ and $N_\gQ=M_\gQ$
	  if $\gQ\ne\gP$, so all localizations of $N$ are projective over the localizations of $\La$ and $N$ is projective over $\La$.
	  Therefore $N\simeq\cA(C,A)$ for some $A$ and $A_\gP\simeq B$.
     \end{proof}
     
    Following the proof of item (2) above, we fix, for every $U_\gP\in\ind Q\hcA_\gP$, an object $W\in\ind Q\cA$ such that $U_\gP$ 
    is a direct summand of $\hat{W}_\gP$ and denote by $\sS(U_\gP)$ an indecomposable direct summand of $\wh{\sS(W)}_\gP$ 
    (it is unique up to an isomorphism). We set $\mS(\hcA_\gP)=\ssuch{\sS(U_\gP)}{U_\gP\in\ind Q\hcA_\gP}$ and 
    $\mA(\hcA_\gP)=\ind\hcA_\gP\=\mS(\hcA_\gP)$.
     We call the objects from $\mS(\cA),\mS(\cA_\gP)$ and $\mS(\hcA_\gP)$ the \emph{S-objects} of the corresponding categories.     
     As we have the notion of S-objects in $\hcA_\gP$, we define the set $\mA(\cA_\gP)$ of the \emph{atomic objects} in $\cA_\gP$ as in
      the preceding section.      
      
     Proposition~\ref{max-s}\,(3) and Theorem~\ref{lg}\,(2) imply that, if the Max-condition is satisfied, a genus $G(A)$ of objects from 
     $\cA$ is defined by the object $V=QA$ from $Q\cA$, the (finite) set $\bP$ of prime ideals $\gP$ such that 
     $A_\gP\not\simeq \sS(V)_\gP$ and the localizations $A_\gP$ for $\gP\in\bP$. If $\bP=\varnothing$, $A=\sS(V)$. 
     Moreover, these data can be prescribed  arbitrary, with the only restriction that $QA_\gP\simeq V$ for all $\gP\in\bP$.
      
     \begin{defin}\label{at3} 
       An object $A\in\ob\cA$ is said to be \emph{$\gP$-atomic} if $A_\gP\in\mA(\cA_\gP)$ and $A_\gQ\simeq \sS(QA)_\gQ$ for $\gQ\ne\gP$.
      We denote by $\mA(\gP,\cA)$ the set of $\gP$-atomic objects, set $\mA(\cA)=\bup_{\gP\in\max\qR}\mA(\gP,\cA)$ and call the objects
      from $\mA(\cA)$ \emph{atomic}. 
       
       \smallskip\noindent
       By the remark above, every atomic object from $\mA(\cA_\gP)$ is the $\gP$-localization of a $\gP$-atomic object, so there is a 
       one-to-one correspondence between $\mA(\cA,\gP)$ and $\cA(\cA_\gP)$ (or, the same, $\mA(\hcA_\gP)$). 
       Obviously, atomic objects are indecomposable.
     \end{defin}
     
     \medskip
     We denote by $G\cA$ the set of genera of $\cA$ and define the group $K_0(G\cA)$ as the quotient of the free group with the basis
     $G\cA$ by the subgroup generated by the elements of the form $G(A\+B)-G(A)-G(B)$. We denote by $[G(A)]$ the class of $G(A)$
     in $K_0(G\cA)$. There is a commutative diagram of groups 
     \begin{equation}\label{eg1} 
     \xy<0em,2.2em>
      \xymatrix@R=1ex{ && K_0(\cA_\gP) \ar[r]^\wedge \ar[dd]^Q  & K_0(\hcA_\gP) \ar[dd]^Q \\
      					K_0(\cA) \ar[r]^G & K_0(G\cA) \ar[ur]^{(\ )_\gP} \ar[dr]_Q \\
						&& K_0(Q\cA) \ar[r]^{\wedge_\gP}   & K_0(Q\hcA_\gP)    }
	 \endxy
     \end{equation}
     The arrow $\xarr G$ is surjective by definition. The Max-condition ensures that the arrows $\xarr Q$ are surjective too. 
     
     \begin{thm}\label{at4} 
       If $\cA$ satisfies the Max-condition, the group $K_0(G\cA)$ is a free abelian group with a basis 
       $\dS=\ssuch{[G(A)]}{A\in\mA(\cA)\cup\mS(\cA)}$.
     \end{thm}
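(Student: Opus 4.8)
The plan is to realize $K_0(G\cA)$ as a direct sum of the generic group $K_0(Q\cA)$ and the ``atomic'' parts of the local groups $K_0(\cA_\gP)$ through a single comparison map, and then to read off the claimed basis from the standard bases of these summands. First I would record the homomorphisms out of $K_0(G\cA)$ appearing in diagram~\eqref{eg1}: the generic map $Q\colon K_0(G\cA)\to K_0(Q\cA)$, $[G(A)]\mapsto[QA]$, and, for each $\gP\in\max\qR$, the localization $(\ )_\gP\colon K_0(G\cA)\to K_0(\cA_\gP)$, $[G(A)]\mapsto[A_\gP]$. Both are well defined because the genus $G(A)$ determines $QA$ and every localization $A_\gP$, and the diagram gives $Q\circ(\ )_\gP=Q$. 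By the Max-condition the map $[W]\mapsto[G(\sS(W))]$ splits $Q$ (since $Q\sS(W)\simeq W$ and $\sS$ is additive), so the summand $\langle\mS(\cA)\rangle$ is free on $\setsuch{[G(\sS(W))]}{W\in\ind Q\cA}$. By Proposition~\ref{max-s}\,(2) the local Theorem~\ref{at2} applies to each $\cA_\gP$: the group $K_0(\cA_\gP)$ is free with basis $\mA(\cA_\gP)\cup\mS(\cA_\gP)$, the map $\sS_\gP\colon K_0(Q\cA)\to K_0(\cA_\gP)$ splits $Q$ locally, and consequently $\ker\bigl(Q\colon K_0(\cA_\gP)\to K_0(Q\cA)\bigr)$ is free on $\setsuch{[A_i]-[\sS_\gP(QA_i)]}{A_i\in\mA(\cA_\gP)}$.

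Next I would assemble the map
\[
 \Phi\colon K_0(G\cA)\longrightarrow K_0(Q\cA)\ \oplus\ \bop_{\gP\in\max\qR}\ker\bigl(Q\colon K_0(\cA_\gP)\to K_0(Q\cA)\bigr),\qquad
 \Phi(x)=\bigl(Qx,\ (x_\gP-\sS_\gP Qx)_\gP\bigr),
\]
where $x_\gP$ denotes the image under $(\ )_\gP$. Each component $x_\gP-\sS_\gP Qx$ lies in $\ker Q_{\cA_\gP}$ because $Q\circ(\ )_\gP=Q$ and $Q\circ\sS_\gP=\mathrm{id}$; and for $x=[G(A)]$ it vanishes whenever $A_\gP\simeq\sS(QA)_\gP$, i.e.\ for all but the finitely many primes in the bad set $\bP$ of the genus parametrization following Proposition~\ref{max-s}. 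Hence $\Phi$ lands in the stated direct sum, whose target is free. It therefore remains to prove that $\Phi$ is an isomorphism carrying $\dS$ to a basis.

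Computing $\Phi$ on $\dS$: for $S=\sS(W)$ one gets $\Phi([G(S)])=([W],0)$, while for a $\gP_0$-atomic object $B\in\mA(\gP_0,\cA)$ one has $B_\gP\simeq\sS(QB)_\gP$ for $\gP\ne\gP_0$ and $B_{\gP_0}\in\mA(\cA_{\gP_0})$, so $\Phi([G(B)])=\bigl([QB],\,[B_{\gP_0}]-[\sS_{\gP_0}(QB)]\bigr)$; its second coordinate is precisely the standard basis vector of $\ker Q_{\cA_{\gP_0}}$ attached to $B$ under the bijection $\mA(\gP_0,\cA)\leftrightarrow\mA(\cA_{\gP_0})$ of Definition~\ref{at3} (this bijection, resting on Theorem~\ref{lg}\,(2), is what guarantees every such basis vector is hit). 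Thus, in the evident basis of the target indexed by $\ind Q\cA\leftrightarrow\mS(\cA)$ and by $\bup_\gP\mA(\cA_\gP)\leftrightarrow\mA(\cA)$, the matrix of $\Phi$ restricted to $\dS$ is square and unipotent block-triangular (identity on the $\mS$-block and on the $\mA$-to-$\mA$ block, zero from $\mA$ to $\mS$), hence invertible over $\mZ$. Consequently $\Phi(\dS)$ is a basis of the target and, in particular, $\Phi$ is surjective.

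The crux is injectivity of $\Phi$, which is the statement that a class in $K_0(G\cA)$ is determined by its generic and all its local data. Writing an arbitrary element as $[G(A)]-[G(B)]$ and assuming $\Phi([G(A)]-[G(B)])=0$ yields $[QA]=[QB]$ in $K_0(Q\cA)$ and $[A_\gP]=[B_\gP]$ in every $K_0(\cA_\gP)$. Since $Q\cA$ is a \ksc, the first gives $QA\simeq QB$; since $\qR_\gP$ is a \dvr, cancellation holds in $\cA_\gP$ by the argument of Section~\ref{loc}, so the second upgrades to $A_\gP\simeq B_\gP$ for every $\gP$. But then $A$ and $B$ lie in the same genus, whence $[G(A)]=[G(B)]$ and the element is $0$. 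I expect this faithfulness of the genus at the level of Grothendieck groups---and in particular the passage from local $K_0$-equalities to honest isomorphisms via \dvr\ cancellation---to be the main obstacle. Granting it, $\Phi$ is an isomorphism onto a free group carrying $\dS$ to a basis, so $\dS=\Phi^{-1}(\Phi(\dS))$ is a basis of $K_0(G\cA)$ and the group is free, as claimed.
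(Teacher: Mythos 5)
Your argument is correct, but it reaches the theorem by a genuinely different route from the paper. The paper proves generation directly and constructively: for an arbitrary indecomposable $A$ it applies Theorem~\ref{lg}\,(1) to morphisms $A\to\sS(QA)\to A$ with composite $a\cdot 1$, splitting $A\+(r-1)\sS(QA)$ into objects each differing from $\sS(QA)$ at a single prime, and then invokes the local Theorem~\ref{at2}\,(1) at that prime; independence is obtained by localizing a relation and applying Theorem~\ref{at2}\,(2). You instead package everything into the comparison map $\Phi$ into $K_0(Q\cA)\+\bop_\gP\ker\bigl(K_0(\cA_\gP)\to K_0(Q\cA)\bigr)$, show by the triangularity computation that $\dS$ maps onto a basis of the target, and obtain generation as a by-product of injectivity of $\Phi$ --- which you derive from the fact that a genus is determined by $QA$ and all $A_\gP$, using cancellation in each $\cA_\gP$ (established in Section~\ref{loc}) and the fact that $Q\cA$ is a \ksc. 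This bypasses Theorem~\ref{lg}\,(1) entirely and makes the splitting $K_0(G\cA)\simeq K_0(Q\cA)\+\bop_\gP\ker\bigl(K_0(\cA_\gP)\to K_0(Q\cA)\bigr)$ explicit, which in particular yields the remarks after the theorem identifying the subgroups generated by $\mS(\cA)$ and by $\mA(\gP,\cA)\cup\mS(\cA)$. The price is a heavier reliance on cancellation in the localizations (to upgrade $[A_\gP]=[B_\gP]$ to $A_\gP\simeq B_\gP$) and on the finiteness of the set of primes with $A_\gP\not\simeq\sS(QA)_\gP$, which you rightly note is what puts $\Phi$ into the direct sum rather than the product; both facts are available in the paper. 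The two proofs share their essential inputs --- Theorem~\ref{at2} for the local bases and the bijection $\mA(\gP,\cA)\leftrightarrow\mA(\cA_\gP)$ of Definition~\ref{at3} --- but the paper's version additionally produces an explicit expression of any $[G(A)]$ in the basis, whereas yours is shorter and more structural.
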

     
     Note that the subgroup generated by $\ssuch{[G(A)]}{A\in\mS(\cA)}$ is isomorphic to $K_0(Q\cA)$ and the subgroup generated by
     $\ssuch{[G(A)]}{A\in\mA(\gP,\cA)\cup\mS(\cA)}$ is isomorphic to $K_0(\cA_\gP)$.
     
     \begin{proof}
      Let $\bA$ be the subgroup of $K_0(G\cA)$ generated by the set $\dS$. Suppose first that $A\in\ind\cA$ is such
      that $A_\gQ\simeq \sS(QA)$ for all prime $\gQ$ except a unique $\gP$. By Theorem~\ref{g2}\,(1), $A_\gP$ is also indecomposable. 
      By Theorem~\ref{at2}, either $A_\gP\simeq\sS(QA)_\gP$ and $G(A)=G(\sS(QA))$ or 
      $A_\gP\+S_\gP\simeq \bop_{i=1}^m(A_i)_\gP$, where $S$ is a direct sum of S-objects, and all $A_i$ are $\gP$-atomic. In the last case 
      $G(A\+S)=G(\bop_{i=1}^mA_i)$, so $[G(A)]+[G(S)]=\sum_{i=1}^m[G(A_i)]$ and $[A]\in\bA$.
     
      If $A$ is arbitrary, let $S=\sS(QA)$. As $QS\simeq QA$, there are morphisms $A\xarr\al S\xarr\be A$ such that $\be\al=a1_A$
      and $\al\be=a1_S$ for some non-zero $a\in \qR$. By Theorem~\ref{lg}\,(1), there are objects $A_i (1\le i\le r)$ such that for each 
      $i$ there is at most one prime $\gP_i$ such that $(A_i)_{\gP_i}\not\simeq S_{\gP_i}$ and $A\+(r-1)S\simeq \bop_{i=1}^rA_i$.
      By the preceding consideration, $[G(A_i)]\in\bA$, therefore also $[G(A)]\in\bA$. Thus $\bA=K_0(G\cA)$, i.e 
      $\dS$ generates $K_0(G\cA)$.
      
      Suppose now that $\sum_{i=1}^n[G(A_i)]=\sum_{j=1}^m[G(B_j)]$, where all $[G(A_i)]\in\dS$. Let $\lst Ak$ and $\lst Bl$ be all
      objects from this list that belong to $\mA(\gP,\cA)$. Then in the group $K_0(\cA_\gP)$ all classes $[(A_i)_\gP]$ with $i>k$ and 
      all classes  $[(B_j)_\gP]$ with $j>l$ belong to the subgroup generated by $\mS(\cA_\gP)$. By Theorem~\ref{at2}\,(2), $k=l$ and 
      $(A_i)_\gP=(B_{\si i})_\gP$ for some permutation $\si$, whence $G(A_i)=G(B_{\si i})$. As it is valid for all primes $\gP$, 
      it remains the case when all summands are from $\mS(\cA)$, which is evident.
     \end{proof}
    
     \begin{cor}\label{at5} 
      $K_0(\cA)\simeq \ker G\+K_0(G\cA)$, where $G$ is the homomorphism from the diagram \eqref{eg1}.    
     \end{cor}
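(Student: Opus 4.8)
The plan is to recognize this as an immediate splitting argument. The homomorphism $G:K_0(\cA)\to K_0(G\cA)$ from diagram~\eqref{eg1} sends $[A]\mapsto[G(A)]$; this is well defined because $[A]=[B]$ in $K_0(\cA)$ means $A\+C\simeq B\+C$ for some $C$, whence $G(A\+C)=G(B\+C)$ and therefore $[G(A)]=[G(B)]$ in $K_0(G\cA)$ after cancelling $[G(C)]$, and it respects the defining relations of $K_0(\cA)$ since $G(A\+B)-G(A)-G(B)$ is among the relators of $K_0(G\cA)$. So the first step is simply to record that we have a short exact sequence
\[
 0\to \ker G\to K_0(\cA)\xarr G K_0(G\cA)\to 0,
\]
the surjectivity of $G$ being exactly the statement, noted right after \eqref{eg1}, that $\xarr G$ is surjective.

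The decisive input is Theorem~\ref{at4}, which asserts that $K_0(G\cA)$ is a \emph{free} abelian group (with the explicit basis $\dS$). A free abelian group is projective, so the surjection $G$ admits a section $s:K_0(G\cA)\to K_0(\cA)$ with $Gs=\mathrm{id}$. The second step is then the standard consequence that a short exact sequence of abelian groups whose quotient is projective splits, yielding $K_0(\cA)\simeq\ker G\+K_0(G\cA)$. Concretely, one may define the section on the basis $\dS$ by choosing, for each $A\in\mA(\cA)\cup\mS(\cA)$, the preimage $[A]$ of $[G(A)]$, and extending freely; then $x-sG(x)\in\ker G$ for every $x$, giving the direct-sum decomposition.

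There is essentially no obstacle here beyond invoking Theorem~\ref{at4}: once freeness (hence projectivity) of $K_0(G\cA)$ is in hand, the corollary is formal homological algebra. The only point deserving a sentence of care is the well-definedness and surjectivity of $G$, both of which are already built into the definition of $K_0(G\cA)$ and the remark following \eqref{eg1}. Thus the whole argument reduces to combining the exact sequence above with the projectivity of the free group $K_0(G\cA)$.

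\begin{proof}
 By the definition of $K_0(G\cA)$ and the construction of diagram~\eqref{eg1}, the map $[A]\mapsto[G(A)]$ is a well-defined homomorphism $G:K_0(\cA)\to K_0(G\cA)$, and it is surjective. Hence we have a short exact sequence
 \[
  0\to \ker G\to K_0(\cA)\xarr G K_0(G\cA)\to 0.
 \]
 By Theorem~\ref{at4} the group $K_0(G\cA)$ is free, hence projective, so this sequence splits. Therefore $K_0(\cA)\simeq \ker G\+K_0(G\cA)$.
\end{proof}
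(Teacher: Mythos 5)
Your proof is correct and is exactly the argument the paper intends: the corollary follows from the surjectivity of $G$ noted after diagram~\eqref{eg1} together with the freeness of $K_0(G\cA)$ established in Theorem~\ref{at4}, which makes the short exact sequence split. The paper states this as an immediate corollary without writing out the details, so your write-up simply makes the same formal splitting argument explicit.
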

     
     Now we have to calculate $\ker G$. We use the relation $\equiv$ and the groups $\cl(A)$ defined in Corollary~\ref{g3}.
     
     \begin{thm}\label{kerG}       
      If $\cA$ satisfies the Max-condition,
      $ \ker G\simeq \bop_{S\in\mS(\cA)} \cl(S) $.
     \end{thm}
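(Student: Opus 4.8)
The plan is to build an explicit map $\Phi\colon\bop_{S\in\mS(\cA)}\cl(S)\to\ker G$ and check that it is a well-defined isomorphism. On the summand indexed by an S-object $S$ I would send a class $c(A')\in\cl(S)$ (so $A'\in G(S)$) to $\Phi_S(c(A'))=[A']-[S]\in K_0(\cA)$, and put $\Phi=\sum_S\Phi_S$. Since $A'\in G(S)$ gives $G(A')=G(S)$, each $[A']-[S]$ lies in $\ker G$. Well-definedness is immediate ($c(A')=c(A'')$ means $A'\+S\simeq A''\+S$, so $[A']=[A'']$), and $\Phi_S$ is a homomorphism: the law $c(A')+c(A'')=c(A''')$ of Corollary~\ref{g3} means $A'\+A''\simeq S\+A'''$, whence $[A']+[A'']-2[S]=[A''']-[S]$.

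Next I would reduce $\ker G$ to manageable generators and prove surjectivity. Because $K_0(G\cA)$ is the group completion of the monoid of genera under $\+$, any element of $\ker G$ can be written $[P]-[N]$ with $[G(P)]=[G(N)]$, i.e. $G(P\+C)=G(N\+C)$ for some $C$; setting $A=P\+C$, $A'=N\+C$ we get $A'\in G(A)$ and the element equals $[A]-[A']$. So $\ker G$ is generated by the differences $[A]-[A']$ with $A'\in G(A)$. For such a generator put $S=\sS(QA)$, so $QA\simeq QS$ and in particular $QA\in\add QS$; by the Roiter addition theorem~\ref{g2}(2) there is $S'\in G(S)$ with $A\+S\simeq A'\+S'$, hence $[A]-[A']=[S']-[S]$. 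Decomposing $S=\sS(QA)$ into indecomposable S-objects $\bop_iS_i$ and correspondingly $S'\simeq\bop_iS'_i$ (distinct S-objects have orthogonal rational types, so genera decompose along types), this equals $\sum_i\Phi_{S_i}(c(S'_i))$. Thus $\Phi$ is surjective.

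Injectivity is the hard part. Arguing as in the proofs of Theorems~\ref{g2} and~\ref{g4}, I would first reduce to $\cA$ semiprime and torsion free. Suppose $\Phi((c_S))=0$; writing $c_{S_j}=c(A'_j)$ with $A'_j\in G(S_j)$ and $W_j=QS_j$ the pairwise distinct indecomposable rational types involved, this says $\bop_jA'_j\+C\simeq\bop_jS_j\+C$ for some $C$. Since $Q\cA$ is semisimple and the $W_j$ are distinct simple types, objects of disjoint rational type admit no nonzero morphisms, so the isomorphism is block diagonal and I may discard the summands of $C$ whose type avoids $\set{W_j}$; adding one copy of $S=\bop_jS_j$ to both sides yields $(A'\+S)\+C_0\simeq 2S\+C_0$, where $A'=\bop_jA'_j$, the object $C_0$ has type in $\set{W_j}$, and both $A'\+S$ and $2S$ lie in $G(2S)$ carrying every $W_j$ with multiplicity $\ge2$. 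It then remains to cancel $C_0$ and conclude $A'\+S\simeq 2S$, equivalently $A'\equiv S$; splitting by type gives $A'_j\+S_j\simeq 2S_j$, i.e. $c(A'_j)=0$ for all $j$. The cancellation I would effect by combining Corollary~\ref{g5} (to cancel the S-object summands of $C_0$, which lie in $\add(A'\+S)$) with a Roiter move and the Jacobinski cancellation theorem~\ref{g4}, whose hypothesis ``$n_i>1$'' holds automatically here because every relevant type occurs in $A'\+S$ and in $2S$ with multiplicity $\ge2$.

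The main obstacle is precisely this last cancellation of the auxiliary object $C_0$ — in particular of its \emph{atomic} summands, which need not belong to $\add(A'\+S)$ and so are not reachable by Corollary~\ref{g5} alone. The device that resolves it is to force multiplicity $\ge2$ in every relevant rational type (by carrying the extra copy of $S$), so that the Eichler-type condition of Theorem~\ref{g4} is satisfied \emph{independently of the skewfields} $\qD_i$, which is where the Max-condition is essential. Conceptually one may instead identify $\ker G=\bap_\gP\ker\big(K_0(\cA)\to K_0(\cA_\gP)\big)$ as the group of locally trivial classes and read the decomposition $\bop_S\cl(S)$ off the local--global theory of genera, the independence of the summands being again the orthogonality of distinct simple types in $Q\cA$.
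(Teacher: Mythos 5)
Your overall architecture (an explicit map from $\bop_S\cl(S)$, reduction of the generators of $\ker G$ to differences $[S']-[S]$ via the Roiter addition theorem~\ref{g2}\,(2), and separation of the summands by the orthogonality of distinct rational types) coincides with the paper's. The gap is exactly where you locate it: injectivity, i.e.\ showing that $[S']=[S]$ in $K_0(\cA)$ forces $S'\equiv S$. Your proposed resolution does not work. From $(A'\+S)\+C_0\simeq 2S\+C_0$ you want to cancel $C_0$, but Theorem~\ref{g4} only cancels the genus-defining object itself (it converts $X'\+X\simeq X''\+X$ into $X'\simeq X''$ for $X',X''\in G(X)$), and Corollary~\ref{g5}\,(3) requires $C_0\in\add(A'\+S)$. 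Forcing every rational type to occur with multiplicity $\ge2$ addresses the Eichler-type hypothesis $n_i>1$, but that hypothesis was never the obstruction: the obstruction is that the indecomposable summands of $C_0$ (atomic or otherwise) need not lie in $\add(A'\+S)$, and no amount of padding by copies of $S$ changes that. The closing ``conceptual'' alternative via locally trivial classes restates the theorem rather than proving it.

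The missing idea is the paper's Lemma~\ref{kg1}. Passing to $\La=\End_\cA(S\+B)$, $M=\cA(S\+B,S)$, $N=\cA(S\+B,B)$, one uses the Max-condition to see that $\De=\End_\La M$ is maximal, hence so is $\Ga=\End_\De M$, an overorder of $\La$ (modulo the annihilator of $M$). Inducing the isomorphism $M_1\+N\simeq M_2\+N$ up to $\Ga$ gives $M_1\+N\Ga\simeq M_2\+N\Ga$, and now, because all $\Ga$-lattices with a fixed rationalization lie in a \emph{single} genus, $N\Ga$ does land in $\add M$, so Corollary~\ref{g5}\,(3) applies and yields $M_1\equiv M_2$. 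This induction up to the maximal overorder is the step that tames the arbitrary auxiliary object, and it is the place where the Max-condition does real work; without it (or an equivalent substitute) your argument cannot be completed.
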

     
     Recall that $\cl(S)\simeq \cl(\De)$, where $\De=\End_{\cA^0}S^0$ is a maximal order in a skewfield. If $\De$ is commutative
     (that is, a Dedekind domain), $\cl\De$ is just the group of ideal classes of $\De$. In the arithmetic case, when $\qQ$ is a global
     field, all groups $\cl(S)$ are finite. Thus, if $\ind Q\cA$ is finite (for instance, $\cA=\La\mdd$ for a semiprime $\qR$-order $\La$),
     $\ker G$ is finite.
     
     \begin{proof}
      First we prove a lemma.

     \begin{lem}\label{kg1} 
       Suppose that $\End_{\cA^0}A^0$ is a maximal order, $A_1.A_2\in G(A)$. If $[A_1]=[A_2]$, then $A_1\equiv A_2$ in $G(A)$.
     \end{lem}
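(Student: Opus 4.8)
The plan is to translate Lemma~\ref{kg1} into a statement about projective lattices over the maximal order $\De=\End_{\cA^0}A^0$ and then invoke the cancellation theorem for genera. First I would reduce to the semiprime torsion-free case, exactly as in the proofs of Theorems~\ref{g2} and~\ref{g4}, so that passing to $(\cA^f)^0$ lets me assume $\De=\End_\cA A$ is an actual maximal $\qR$-order. Then I set $M_i=\cA(A,A_i)$ for $i=1,2$; by Lemma~\ref{11} these are projective $\De$-lattices lying in the genus $G(\De)$, and the functor $\cA(A,\_)$ restricted to $\add A$ is an equivalence onto $\proj\De$ that sends $A$ to $\De$ itself.

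The key point is that the equality $[A_1]=[A_2]$ in $K_0(\cA)$ is by Definition~\ref{k0} equivalent to the existence of an object $C$ with $A_1\+C\simeq A_2\+C$. I want to promote this to the corresponding statement $M_1\equiv M_2$ in $G(\De)$, i.e. $M_1\+\De\simeq M_2\+\De$. The cleanest route is to observe that stable isomorphism in $K_0$ together with the genus hypothesis forces $M_1$ and $M_2$ to have the same class in $K_0(\De\lat)$, and then to use that over a maximal order the locally free class group $\cl(\De)$ \emph{is} the reduced projective class group: two projective lattices in the same genus that are stably isomorphic are already $\equiv$-equivalent. Concretely, I would appeal to the structure theory of \cite[\S\,35]{rei} (or~\cite[31.28]{rei}, the Roiter addition theorem already invoked as Theorem~\ref{g2}\,(2)): for $M_1,M_2\in G(\De)$ one has $M_1\+\De\simeq M_2\+\De'$ for some $\De'\in G(\De)$, and the class of $\De'$ in $\cl(\De)$ records the obstruction; stable isomorphism pins down $c(\De')=c(\De)$, whence $M_1\+\De\simeq M_2\+\De$.

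The main obstacle I anticipate is making precise the step from a stable isomorphism $A_1\+C\simeq A_2\+C$ with an \emph{arbitrary} object $C$ to one with a controlled summand that lives inside $\add A$, so that the functor $\cA(A,\_)$ can be applied. Here I would use Theorem~\ref{g2}\,(3), which gives $C'\Sb A\+A$ for any $C'\in G(A)$, combined with Theorem~\ref{g4} (Jacobinski cancellation, whose hypotheses are met because $\De$ is a maximal order in a skewfield, so $Q\De$ is itself a skewfield with $n_i=1$ and $\qD_i$ possibly noncommutative)—but since $A_1,A_2$ lie in the \emph{same} genus and are stably isomorphic, the relevant cancellation is precisely the one encoded by the class group $\cl(\De)$ being a group. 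Thus I expect the proof to reduce, after these reductions, to the single arithmetic fact that for a maximal order the map $G(\De)\to K_0(\De\lat)$ has fibers exactly the $\equiv$-classes, which is standard genus theory for maximal orders.

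Finally, I would transport the conclusion back through Lemma~\ref{11}: from $M_1\+\De\simeq M_2\+\De$ (i.e. $\cA(A,A_1)\+\cA(A,A)\simeq\cA(A,A_2)\+\cA(A,A)$) I recover $A_1\+A\simeq A_2\+A$, which is precisely $A_1\equiv A_2$ in $G(A)$ by the definition of $\equiv$ in Corollary~\ref{g3}. The reductions to the semiprime torsion-free case and the equivalence $\add A\simeq\proj\De$ are the routine parts; the heart of the argument is the identification of the $\equiv$-relation with stable isomorphism over a maximal order, which I expect to cite rather than reprove.
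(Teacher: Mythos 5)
Your overall strategy---reduce to the semiprime torsion-free case, transport everything to lattices over $\De=\End_\cA A$ via the functor $\cA(A,\_)$, and conclude by cancellation over a maximal order---is a workable alternative to the paper's route, but as written it has a genuine gap at exactly the step you flag as the ``main obstacle,'' and the tools you propose for closing it do not apply. The hypothesis $[A_1]=[A_2]$ gives $A_1\+C\simeq A_2\+C$ for an \emph{arbitrary} object $C$ of $\cA$, which need not lie in $\add A$ nor in any genus related to $A$; Theorem~\ref{g2}\,(3) concerns only objects of $G(A)$, so it says nothing about $C$. Moreover, the assertion that ``stable isomorphism pins down $c(\De')=c(\De)$'' is precisely the injectivity statement the lemma is meant to establish, so at its core the argument is circular unless you give an independent reason why an arbitrary stabilizer can be traded for one in $\add\De$. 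Finally, your parenthetical justification of Theorem~\ref{g4} misreads its hypothesis: that theorem requires each $\qD_i$ to be commutative \emph{or} $n_i>1$, and a maximal order in a noncommutative skewfield with $n_i=1$ is exactly the excluded case (where cancellation can genuinely fail, e.g.\ for totally definite quaternion orders). The lemma survives only because it asserts $A_1\equiv A_2$ rather than $A_1\simeq A_2$, so cancellation is ultimately invoked for objects whose rational endomorphism rings have matrix size at least $2$; your write-up does not make that distinction.

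The paper closes the gap by a different device: it sets $\La=\End_\cA(A\+B)$ so that the arbitrary stabilizer $B$ itself becomes a projective $\La$-lattice $N$, observes that $\Ga=\End_\De M$ is a maximal overorder of $\La/\ann_\La M$, replaces $N$ by the $\Ga$-lattice $N\Ga$, and uses that over the maximal order $\Ga$ all lattices of fixed rational type form a single genus, so that $N\Ga$ lies in $\add M$ and Corollary~\ref{g5}\,(3) applies. Your route can in fact be repaired without this detour: since $\De$ is maximal, hence hereditary, $L=\cA(A,C)$ is automatically a \emph{projective} $\De$-lattice, so applying $\cA(A,\_)$ to $A_1\+C\simeq A_2\+C$ yields $M_1\+L\simeq M_2\+L$ with $L\in\proj\De=\add M_1$; Corollary~\ref{g5}\,(3) (whose proof is where Jacobinski cancellation legitimately enters, via objects with matrix size $\ge 2$) then gives $M_1\equiv M_2$, hence $A_1\+A\simeq A_2\+A$. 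But this repair---the hereditariness of $\De$ and the correct locus of the cancellation argument---is the actual content of the proof and is missing from your proposal.
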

     \begin{proof}
      By Proposition~\ref{12}, we can suppose that $\cA$ is semiprime. Let $[A_1]=[A_2]$, i.e. $A_1\+B\simeq A_2\+B$ for some
      object $B$. Let $\La=\End_\cA(A\+B)$, $M=\cA(A\+B,A)$, $M_i=\cA(A\+B,A_i)\ (i=1,2)$ and $N=\cA(A\+B,B)$. 
      Then $\La$ is a semiprime order, $M,M_1,M_2$ and $N$ are right $\La$-lattices and $M_1\+N\simeq M_2\+N$. 
      Note that $\End_\La M_i\simeq\End_\La M\simeq\De$,
      hence $\Ga=\End_\De M\simeq \End_\De M_i$ is a maximal order, which is an overorder of $\La/\ann_\La M$. 
      If $\ann_\La M\ne0$, then $Q\La=\qA_1\xx\qA_2$ such that $\qA_2=Q\ann_\La M$. If $B_1$ is the projection of $N$
      onto $(QN)\qA_1$, then $M_1\+B_1\simeq M_2\+B_1$. Therefore, we can suppose that $M$ is faithful and $\Ga$ is an
      overorder of $\La$. Now $M_1\+N\simeq M_2\+N$ implies $M_1\+N\Ga\simeq M_2\+N\Ga$, where all summands are 
      $\Ga$-lattices. Since $\Ga$ is maximal, all $\Ga$-lattices $L$ with fixed $QL$ are in the same genus \cite[31.2]{cr1}, hence
      all $\Ga$-lattices belong to $\add M$ (it follows from Theorem~\ref{g2}). By Corollary~\ref{g5}\,(3), 
      $M_1\+N\Ga \simeq M_2\+N\Ga$ implies $M_1\equiv M_2$. Returning, by Lemma~\ref{11}, to $\cA$, we obtain that
       $A_1\equiv A_2$.
     \end{proof}
     
     Fix now a set $\dA$ of representatives of indecomposable genera of the category $\cA$. For convenience, we suppose that
     $\dA\spe\dS=\ssuch{\sS(W)}{W\in\ind Q\cA}$. One easily verifies that $\ker G=\ssuch{[A']-[A]}{A'\in G(A)}$. 
     Let $S=\sS(QA)$. By Theorem~\ref{g2}\,(2), there is an object $S'\in G(S)$ such that $A'\+S\simeq A\+S'$, whence 
     $[A']-[A]=[S']-[S]$. Therefore, $\ker G=\ssuch{[S']-[S]}{S\in\mS(\cA),\,S'\in G(S)}$. By Lemma~\ref{kg1}, $[S']-[S]=[S'']-[S]$
     \iff $S'\equiv S''$. Hence, for any fixed $S$, mapping $[S']-[S]$ to $c(S')$, we obtain an isomorphism of the subgroup $\fK(S)$ 
     generated by the differences $[S']-[S]$ with the group $\cl(S)$.  By definition, if $S_1,S_2\in\mS(\cA)$ and $S_1\not\simeq S_2$, 
     then $\cA(S_1,S_2)\in\nil\cA$. It easily implies that $\fK(S_1)\cap\sum_{S_2\not\simeq S_1}\fK(S_2)=0$, which accomplishes the proof.
     \end{proof}
          
     \begin{exam}\label{hered} 
       Let $\cA=\La\lat$, where $\La$ is a hereditary $\qR$-order. As every $\La$-lattice is projective, $K_0(\cA)$ coincides with $K_0(\La)$, 
       the Grothendieck group of projective $\La$-modules. Since $\La$ decomposes just as $Q\La$ and the center of each component of 
       $\La$ is a Dedekind ring \cite[10.8,10.9]{rei}, we may suppose that $Q\La$ is a central simple $\qQ$-algebra. Then $\mS(\cA)$
       consists of a unique lattice $S$. For every prime $\gP$, $Q\hLa_\gP\simeq\Mat(n_\gP,\qF_\gP)$, where $\qF_\gP$ is a skewfield. 
       Let $\Ga_\gP$ be the (unique) maximal $\hqR_\gP$-order in $\qF_\gP$ \cite[12.8]{rei}, $\gM_\gP$ be its maximal ideal. 
       There is an integer $m_\gP\le n_\gP$ such that $\hLa_\gP$ is Morita equivalent to the ring $\qH(m_\gP,\Ga_\gP)$ of 
       $m_\gP\xx m_\gP$ matrices of the form
       \[
        \mtr{ \Ga_\gP & \Ga_\gP & \Ga_\gP & \dots & \Ga_\gP \\
				 \gM_\gP & \Ga_\gP & \Ga_\gP & \dots & \Ga_\gP \\
				 \gM_\gP & \gM_\gP & \Ga_\gP & \dots & \Ga_\gP \\
				 \hdotsfor 5 \\
				 \gM_\gP & \gM_\gP & \gM_\gP & \dots & \Ga_\gP }
       \]
       (see \cite[39.14]{rei}). As $\La_\gP$ is maximal for almost all $\gP$, almost all $m_\gP=1$. This ring, hence also $\hLa_\gP$,
       has $m_\gP$ indecomposable lattices and one of them must be chosen as $\sS(U)$, so $\mA(\hcA_\gP)$ consists of $m_\gP-1\,$ 
       lattices, as well as $\mA(\cA_\gP)$. If $Q\La\simeq \Mat(n,\qD)$, where $\qD$ is a skewfield, then $\End_\La S=\De$ is a maximal 
       order in $\qD$. Let $m=1+\sum_\gP(m_\gP-1)$. Theorems~\ref{at4} and \ref{kerG} imply that $K_0(\La)\simeq \cl(\De)\+\mZ^m$.
     \end{exam}
          
     \begin{exam}\label{sw} 
          Consider the \emph{stable homotopy category of polyhedra} $\sw$. Its objects are \emph{pointed polyhedra}, that is
     finite CW-complexes with a fixed point, and the sets of morphisms are \emph{stable homotopy classes} of continuous maps,
     $\hos(A,B)=\varinjlim \hot(S^nA,S^nB)$, where $\hot(A,B)$ is the set of homotopy classes of continuous maps preserving fixed
     points and $S^nA$ is the $n$-fold suspension of $A$. It is known \cite{coh} that $\sw$ is a fully additive and locally finite 
     $\mZ$-category. The bouquet (one-point union) $A\vee B$ plays the role of direct sum in this category. For a polyhedron $A$ set
     \begin{align*}
      & r_n(A)=\dim_\mQ\mQ\*_\mZ\hos(S^n,A), \text{ where $S^n$ is the $n$-dimensional sphere}, \\
      &\textstyle B(A)=\bigvee_n r_n(A)S^n,\\
      &\textstyle B_0(A)=\bigvee_{r_n(A)>0} S^n.
     \end{align*}
     Note that $\hos(S^n,A)$ is torsion if $n>\dim A$, so $B(A)$ and $B_0(A)$ are finite bouquets of spheres. Moreover,
     $QA\simeq QB(A)$ in the category $Q\sw$ \cite[Prop.\,1.5]{dk}, so we can take $\ssuch{S^n}{n\in\mN}$ for $\mS(\cA)$.
     Then $B(A)=\sS(QA)$. It also implies that the map $QA\mps Q\hat{A}_p$ gives a bijection $\iso Q\sw\ito\iso Q\wh{\sw}_p$.    
     Therefore, every object from $\wh\sw_p$ is of the form $\hat{A}_p$ for an object from $\sw$. In particular, 
     $\mA(\wh{\sw}_p)=\ssuch{\hat{A}_p}{A\in\mA(\sw,p)}$. Thus the $p$-atomic polyhedra are just indecomposable 
     $p$\emph{-primary} polyhedra in the sense of \cite{fr} or \cite{coh}. Recall that a polyhedron $A$ is said to be 
     $p$-\emph{primary} if $A_p\not\simeq B(A)_p$, but $A_q\simeq B(A)_q$ for any prime $q\ne p$. As $g(\mZ)=1$, 
     Theorems~\ref{at4} and \ref{kerG} imply the well-known theorem of Freyd \cite{fr} (see also \cite[Th.\,4.44]{coh}).
     
     \begin{thm}\label{freyd} 
      $K_0(\sw)$ is a free abelian group with a basis consisting of isomorphism classes of spheres and genera of indecomposable
     $p$-primary polyhedra for all prime $p$.    
     \end{thm}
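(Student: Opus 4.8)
The plan is to obtain Theorem~\ref{freyd} as an immediate specialization of the global structure theory to $\cA=\sw$, using the identifications already made in Example~\ref{sw}. The three ingredients I would invoke are Corollary~\ref{at5} (the splitting $K_0(\cA)\simeq \ker G\+K_0(G\cA)$), Theorem~\ref{kerG} (computing $\ker G$ as a direct sum of class groups), and Theorem~\ref{at4} (describing $K_0(G\cA)$ as free on the genera of atomic and S-objects). The only preliminary is to confirm that $\sw$ satisfies the Max-condition: this holds because every indecomposable of $Q\sw$ is some $QS^n$ with $\End_\sw S^n=\mZ$ a maximal order in $\mQ$, so that $\mS(\sw)=\ssuch{S^n}{n\in\mN}$ and $\sS(QA)=B(A)$.

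First I would compute $\ker G$. By Theorem~\ref{kerG}, $\ker G\simeq \bop_{S\in\mS(\sw)}\cl(S)$, and for each sphere $\cl(S^n)\simeq\cl(\mZ)$; since $\mZ$ is a principal ideal domain its ideal class group is trivial, so every summand vanishes and $\ker G=0$. Consequently the surjection $G$ of diagram~\eqref{eg1} is an isomorphism, $K_0(\sw)\simeq K_0(G\sw)$, and moreover $[A]=[A']$ in $K_0(\sw)$ whenever $A'\in G(A)$. In particular the basis of $K_0(\sw)$ is indexed by genera.

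Next I would read off the basis from Theorem~\ref{at4}: $K_0(G\sw)$ is free on $\ssuch{[G(A)]}{A\in\mA(\sw)\cup\mS(\sw)}$. For the S-object part, $\mS(\sw)$ consists of the spheres, and since $g(S^n)=1$ each genus $G(S^n)=\set{S^n}$ is a single isomorphism class, so these basis elements are exactly the isomorphism classes of spheres. For the atomic part, $\mA(\sw)$ was identified in Example~\ref{sw} with the indecomposable $p$-primary polyhedra ranging over all primes $p$, and these contribute their genera. Combining the two parts yields the asserted basis.

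The step requiring the most care is the vanishing of $\ker G$: it is precisely the triviality of $\cl(\mZ)$ that collapses the general ``free part plus ideal-class part'' decomposition to a purely free group, and it is the singleton-genus fact $g(S^n)=1$ that lets the spheres appear through their isomorphism classes rather than through nontrivial class-group data. Once these two arithmetic facts about $\mZ$ are in hand, everything else is a direct transcription of Theorems~\ref{at4}, \ref{kerG} and Corollary~\ref{at5} into the topological language of Example~\ref{sw}.
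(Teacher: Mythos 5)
Your proposal is correct and follows essentially the same route as the paper, which derives Theorem~\ref{freyd} from Theorems~\ref{at4} and \ref{kerG} (via the splitting of Corollary~\ref{at5}) together with the identifications of Example~\ref{sw} and the fact that $g(\mZ)=1$. Your write-up merely makes explicit the two consequences the paper compresses into ``$g(\mZ)=1$'' --- the vanishing of $\cl(S^n)\simeq\cl(\mZ)$ and the singleton genera of spheres --- so there is nothing to add.
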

     
      As $QB(A)\in\add QB_0(A)$ and $g(B_0(A))=1$, Theorem~\ref{g2}\,(2) implies the following result proved in \cite[Th.\,2.5]{dk},
      which is a strengthened variant of \cite[Th.\,1.3]{fr}.
     
     \begin{thm}\label{d-k} 
       $G(A)=G(A')$ \iff $A\+B_0(A)\simeq A'\+B_0(A)$.
     \end{thm}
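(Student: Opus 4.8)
The plan is to read the equivalence as a genus-cancellation statement and to derive each direction from the machinery already assembled. Write $\cA=\sw$ and record three preliminary observations: $QA\simeq QB(A)$; the bouquet $B(A)$ is a direct sum of copies of the very spheres that occur once each in $B_0(A)$, so $QB(A)\in\add QB_0(A)$ and hence $QA\in\add QB_0(A)$; and objects of the same genus have the same rational type, so $B_0(A)=B_0(A')$ whenever $A'\in G(A)$. The last point shows that the bouquet $B_0(A)$ appearing in the statement is a genus invariant, so the assertion is well posed.

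For the implication $G(A)=G(A')\Rightarrow A\+B_0(A)\simeq A'\+B_0(A)$, I would apply the Roiter addition theorem (Theorem~\ref{g2}\,(2)) with $B:=B_0(A)$. Since $QA\in\add QB_0(A)$ and $A'\in G(A)$, it produces an object $B'\in G(B_0(A))$ with $A\+B_0(A)\simeq A'\+B'$. The decisive input is then $g(B_0(A))=1$: the genus of a bouquet of pairwise distinct spheres is a singleton, because each sphere is an $S$-object with $\End_\sw S^n=\mZ$, a maximal order whose ideal class group vanishes, so there is nothing to twist. Consequently $B'\simeq B_0(A)$ and the desired isomorphism follows immediately.

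For the converse I would localize. If $A\+B_0(A)\simeq A'\+B_0(A)$, then localizing at every prime $p$ gives $A_p\+B_0(A)_p\simeq A'_p\+B_0(A)_p$ in the local category $\sw_p$. As recalled at the start of Section~\ref{loc}, cancellation holds in such a local category, since an isomorphism is detected after completion and the completion $\wh{\sw}_p$ is a Krull--Schmidt category. Cancelling $B_0(A)_p$ leaves $A_p\simeq A'_p$, and since this holds for all $p$ we conclude $G(A)=G(A')$ directly from the definition of genus.

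I expect the only genuine obstacle to be the verification that $g(B_0(A))=1$, which is exactly what forces the Roiter correction term $B'$ to collapse onto $B_0(A)$ itself; once the triviality of the genus of a bouquet of distinct spheres is secured, both directions reduce to a single application of Theorem~\ref{g2}\,(2) and to the local cancellation property of Section~\ref{loc}.
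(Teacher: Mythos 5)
Your proof is correct and takes essentially the same route as the paper, which derives the theorem in one line from Theorem~\ref{g2}\,(2) together with the facts $QB(A)\in\add QB_0(A)$ and $g(B_0(A))=1$ --- exactly your forward direction. Your converse via localization and cancellation in $\sw_p$ (valid because isomorphism is detected after completion and $\wh{\sw}_p$ is a Krull--Schmidt category, as recalled at the start of Section~\ref{loc}) is the natural completion of the paper's sketch.
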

     
     Note also that, using Theorem~\ref{lg}  we obtain the known example of non-uniqueness of decomposition in the category $\sw$. Namely, let
     $A(n)$ denotes the cone of the map $n\nu:S^6\to S^3$, where $\nu$ is the generator of the groups $\pi_6(S^3)\simeq\mZ/24\mZ$.
     One can easily check that there are morphisms $A(1)\xarr\al S^3\vee S^7\xarr\be A(1)$ such that $\al\be=24\ccd1_{S^3\vee S^7}$
     and $\be\al=24\ccd1_{A(1)}$. Then Theorem!\ref{lg}\,(1) implies that $A(1)\vee S^3\vee S^7\simeq A(3)\vee A(8)$ (the polyhedra in the
     right part of this equality are, respectively, $2$-primary and $3$-primary). It is even a homotopic
     equivalence of spaces, since we are in the stable range. All polyhedra in this decompositions are indecomposable. Unlike Example~\ref{nks}, 
     this one is of ``global'' nature. It is essential, since Corollary~\ref{at2c} implies that all localizations $\sw_p$ are Krull--Schmidt categories. 
     \end{exam}


\begin{thebibliography}{99}
  \bibitem{ba2}
  Bass,~H.:  
  On the ubiquity of Gorenstein rings.
  Math. Z. 82 (1963) 8--28.  
  
  \bibitem{bass}
  Bass,~H.:
  Algebraic K-Theory.
  W.\,A.~Benjamin, Inc., 1968.
  
 \bibitem{coh}
 Cohen,~J.\,M.:
 Stable Homotopy.
 Lecture Notes in Math. 165, Springer, 1970.
 
 \bibitem{cr1} 
 Curtis,~C.\,W., Reiner,~I.:
 Methods of Representations Theory with Applications to Finite Groups and Orders. I.
 John Wiley \& Sons, 1981.
 
  \bibitem{cr2} 
 Curtis,~C.\,W., Reiner,~I.:
 Methods of Representations Theory with Applications to Finite Groups and Orders. II.
 John Wiley \& Sons, 1987.
   
 \bibitem{dk}
 Drozd,~Y., Kolesnyk,~P.:
 On genera of polyhedra.
 Cent.~Eur.~J.~Math. 10 (2012) 401--410.
 
  \bibitem{fr0}
  Freyd,~P.:
  Abelian Categories.
  Harper and Row, 1964.  
  
 \bibitem{fr}
  Freyd,~P.:
  Stable homotopy. II.
  Appl. Categorical Algebra.
  Proc. Symp. Pure Math. 17 (1970) 161--183.

 \bibitem{gab}
 Gabriel,~P.:
 Des cat\'egories ab\'eliennes.
  Bull. S.M.F. 90 (1962) 323--448.
 
 \bibitem{gud}
 Gudivok,~P.\,M., Pogoriljak,~E.\,Ja.:
 On the Krull-Schmidt theorem for the representation of groups over integer number rings.
 Mat. Zametki 7 (1970)125--135.
 
\bibitem{lam}
 Lam, T. Y.:
 A First Course in Noncommutative Rings. 
 Springer, 1991.

\bibitem{mac}
 Mac~Lane,~S.:
 Categories for the Working Mathematician.
 Springer, 1971.
 
 \bibitem{ma}
 Matsumura,~H.:
 Commutative Algebra.
 The Benjamin/Cummings Publishing Company, 1980.

 \bibitem{rei}
 Reiner,~I.:
 Maximal Orders.
 Clarendon Press, 2003.

\bibitem{roi}
Roiter,~A.\,V.:
On integral representations belonging to one genus.
Izv. Akad. Nauk SSSR, Ser. Mat. 30, (1966) 1315--1324.
  
\end{thebibliography}
\end{document}